\documentclass[11pt]{article}
\usepackage{epsfig,mst-stylefile,amssymb,amsmath,harvard}
\usepackage{graphicx}
\usepackage[svgnames]{xcolor}
\textwidth 6.4in \hoffset -.83in \textheight 9in \voffset -.8in

\newcommand{\bbR}{{\Bbb R}}
\newcommand{\bbN}{{\Bbb N}}
\newcommand{\bbC}{{\Bbb C}}

\newcommand{\norm}[1]{\left\|#1\right\|}
\newcommand{\dom}{\textnormal{dom}}
\newcommand{\ran}{\textnormal{ran}}
\newcommand{\supp }{\textnormal{supp}}

\renewcommand{\cite}{\citeyear}

\begin{document}

\title{Domain and range symmetries of operator fractional Brownian fields
\thanks{The first author was supported in part by the ARO grant W911NF-14-1-0475.
The second author was supported in part by the NSF grants DMS-1462156 and EAR-1344280 and the ARO grant W911NF-15-1-0562. The third author was supported in part by the NSA grant H98230-13-1-0220. The first author would like to thank Karl H.\ Hofmann and Mahir B.\ Can for the enlightening discussions.}
\thanks{{\em AMS
Subject classification}. Primary: 60G18, 60G15.}
\thanks{{\em Keywords and phrases}: operator fractional Brownian field, symmetry group, anisotropy, operator scaling,
operator self-similarity, long range dependence.} }

\author{Gustavo Didier \\ Tulane University  \and Mark M.\ Meerschaert \\ Michigan State University \and Vladas Pipiras \\ University of North Carolina}

\bibliographystyle{agsm}

\maketitle

\begin{abstract}
An operator fractional Brownian field (OFBF) is a Gaussian, stationary increment $\bbR^n$-valued random field on $\bbR^m$ that satisfies the operator self-similarity property $\{X(c^E t)\}_{t \in \bbR^m} \stackrel{{\mathcal L}}= \{c^H X(t)\}_{t \in \bbR^m}$, $c > 0$, for two matrix exponents $(E,H)$. In this paper, we characterize the domain and range symmetries of OFBF, respectively, as maximal groups with respect to equivalence classes generated by orbits and, based on a new anisotropic polar-harmonizable representation of OFBF, as intersections of centralizers. We also describe the sets of possible pairs of domain and range symmetry groups in dimensions $(m,1)$ and $(2,2)$.
\end{abstract}


\section{Introduction}

A random vector is called {\it full} if its distribution is not supported on a lower dimensional hyperplane.  A random field $X = \{X(t)\}_{t \in \bbR^m}$ with values in $\bbR^n$ is called {\it proper} if $X(t)$ is full for all $t\neq 0$.  A linear operator $P$ on $\bbR^m$ is called a projection if $P^2=P$.  Any nontrivial projection $P\neq I$ maps $\bbR^m$ onto a lower dimensional subspace.  We say that a random field $X$ is {\it degenerate} if there exists a nontrivial projection $P$ such that $X(t)=X(Pt)$ for all $t\in\bbR^m$.  We say that $X$ is {\it stochastically continuous} if $X(t_n)\to X(t)$ in probability whenever $t_n\to t$.  A proper, nondegenerate, and stochastically continuous random vector field $X$ is called {\it operator self-similar} (o.s.s.) if
\begin{equation}\label{e:o.s.s.}
\{X(c^E t)\}_{t \in \bbR^m} \stackrel{{\mathcal L}}= \{c^{H}X(t)\}_{t \in \bbR^m}\quad \text{for all }c> 0.
\end{equation}
In \eqref{e:o.s.s.}, $\stackrel{{\mathcal L}}=$ indicates equality of finite-dimensional distributions, $E \in M(m,\bbR)$ and $H \in M(n,\bbR)$, where $M(p,\bbR)$ represents the space of real-valued $p \times p$ matrices, and $c^{M} = \exp(M (\log c)) = \sum^{\infty}_{k=0} (M \log c)^k/ k!$ for a square matrix $M$. For a univariate stochastic process (namely, $(m,n) = (1,1)$), the relation \eqref{e:o.s.s.} is called self-similarity (see, for example, Embrechts and Maejima \cite{EmbrechtsMaejima}, Taqqu \cite{taqqu:2003}). 

An {\it operator fractional Brownian field} (OFBF, in short) is an $\bbR^n$-valued random field $X = \{X(t)\}_{t\in \bbR^m}$ satisfying the following three properties: $(i)$ it is Gaussian with mean zero; $(ii)$ it is o.s.s.; $(iii)$ it has stationary increments, that is, for any $h\in\bbR^m$, $\{X(t+h) - X(h)\}_{t\in\bbR^m} \stackrel{{\mathcal L}}= \{X(t) - X(0)\}_{t\in\bbR^m}$. When $(m,n) = (1,1)$, OFBF is the celebrated fractional Brownian motion, widely used in applications due to the long-range dependence property of its increments (see Samorodnitsky and Taqqu \cite{samorodnitsky:taqqu:1994}, Doukhan et al.\ \cite{doukhan:2003}). When $m = 1$, $n \geq 1$, OFBF is known as operator fractional Brownian motion (OFBM).

The theory of o.s.s.\ stochastic processes ($m = 1$, $n \geq 1$) was developed by Laha and Rohatgi \cite{laha:rohatgi:1981} and Hudson and Mason \cite{hudson:mason:1982}, see also Chapter 11 in Meerschaert and Scheffler \cite{meerschaert:scheffler:2001}. OFBM was studied by Didier and Pipiras \cite{didier:pipiras:2011} (see also Amblard et al.\ \cite{amblard:coeurjolly:lavancier:philippe:2012}, and Robinson \cite{robinson:2008}, Kechagias and Pipiras \cite{kechagias:pipiras:2015:def,kechagias:pipiras:2015:ident} on the related subject of multivariate long-range dependent time series). For scalar fields (namely, $m \geq 1$, $n=1$), the analogues of fractional Brownian motion and fractional stable motion were studied in depth by Bierm\'e et al.\ \cite{bierme:meerschaert:scheffler:2007}, with related work and applications found in Benson et al.\ \cite{fractint}, Bonami and Estrade \cite{bonami:estrade:2003}, Lavancier \cite{lavancier:2007}, Bierm\'{e} and Lacaux \cite{bierme:lacaux:2009}, Bierm\'{e} et al.\ \cite{bierme:benhamou:richard:2009}, Guo et al.\ \cite{guo:lim:meerschaert:2009}, Clausel and Vedel \cite{clausel:vedel:2011,clausel:vedel:2013}, Meerschaert et al.\ \cite{WRCR:WRCR20376}, Dogan et al.\ \cite{GRL:GRL52237}, Puplinskait{\.e} and Surgailis \cite{puplinskaite:surgailis:2015}.  Li and Xiao \cite{li:xiao:2011} proved important results on o.s.s.\ random vector fields. Baek et al.\ \cite{baek:didier:pipiras:2014} bridged the gap between harmonizable and moving average integral representations for OFBF.

The \textit{domain and range symmetries} of a proper, nondegenerate random field $X$ starting at zero are defined by
\begin{equation}\begin{split}\label{e:def_G1}
G^{\textnormal{dom}}_{1}(X) :=& \Big\{A \in GL(m,\bbR):\{X(At)\}\stackrel{{\mathcal L}}= \{X(t)\} \Big\},\\
G^{\textnormal{ran}}_{1}(X) :=& \Big\{ B \in GL(n,\bbR): \{BX(t)\} \stackrel{{\mathcal L}}= \{X(t)\}\Big\},
\end{split}\end{equation}
where $GL(k,\bbR)$ denotes the general linear group of invertible $k \times k$ matrices. Cohen et al.\ \cite{cohen:meerchaert:rosinski:2010} and Didier and Pipiras \cite{didier:pipiras:2012}, respectively, characterized the range symmetries of operator stable L\'{e}vy processes and OFBM.

Symmetry is an important modeling consideration, and a useful guide to model selection (see Liao \cite{liao:1992} on Markov processes and Meerschaert and Veeh \cite{meerschaert:veeh:1995} on measures). In particular, the interest in the study of symmetries is tightly connected to two major themes: (a) anisotropy, i.e., when $G^{\textnormal{dom}}_{1}(X)$ is not the orthogonal group, and its applications in several fields such as bone radiographic imaging and hydrology; and (b) the parametric identification of operator scaling laws, which depends on both $G^{\textnormal{dom}}_{1}(X)$ and $G^{\textnormal{ran}}_{1}(X)$. The latter theme is treated in detail for general o.s.s.\ random fields in the related paper Didier et al.\ \cite{didier:meerschaert:pipiras:2016:exponents}. In regard to the former, note that the term ``anisotropy", like ``nonlinear" or ``non-Gaussian", leaves open the question of what \textit{types }of anisotropy (i.e., domain symmetry groups $G^{\textnormal{dom}}_{1}(X)$) exist. This paper is dedicated to the domain and range symmetry groups \eqref{e:def_G1} themselves, and thus, the characterization of anisotropy, when $X$ is an OFBF.

 We use the harmonizable representation of OFBF to construct separate mathematical characterizations of domain and range symmetry groups. More precisely, we establish in any dimension $m$ or $n$ that domain symmetry groups are maximal groups with respect to orbits (Proposition \ref{p:Gdom1=>maximality}), and break up range symmetry groups into a set of commutativity relations (Proposition \ref{p:Gran_structure}) based on a new anisotropic polar-harmonizable representation of OFBF. The latter representation (Proposition \ref{p:akin_to_Prop2.3_Bierme_etal2007} or Remark \ref{r:polar}) is by itself of interest for the analysis of anisotropic fractional covariance structures, and it further provides a mathematical framework for constructing OFBFs displaying all (possible) pairs of domain and range symmetry groups in dimensions $(m,n) = (m,1)$ and $=(2,2)$ (Theorem \ref{t:OFBF_m,2}). In dimension $(2,2)$, this is attained based on absolutely continuous or, in most cases, singular spectral measures, which illustrates the fact that identical symmetry structures can be attained by different covariance structures. In particular, in dimensions $m =2$ and $n = 2$, respectively, all domain and range symmetry groups are explicitly described (Corollary \ref{c:Gdom1,Gran1_dim_m=2,n=2}). As a byproduct of the analysis, it is shown that not all pairs of domain and range symmetry groups, as a Cartesian product, are possible. Notwithstanding the widespread interest in anisotropy (e.g., Xiao \cite{xiao:2009}, Pipiras and Taqqu \cite{pipiras:taqqu:2016}, chapter 9), to the best of our knowledge this paper provides the first characterization of the domain symmetry group -- namely, the types of anisotropy -- of a class of random fields for some $m \geq 2$. A full description of the pairs of symmetry groups in general dimension $(m,n)$ remains an open problem and a topic for future research (see Remarks \ref{r:difficulties_in_extending} and \ref{r:-I_on_mathcalG} on the difficulties involved).

 We provide two applications of our analysis. First, we develop a parametric characterization of the subclass of isotropic OFBFs in any dimension $(m,n)$ (Proposition \ref{p:Gdom1=>maximality}) that sheds light on the fact that isotropy is not determined solely by the domain exponent $E$ in \eqref{e:o.s.s.}. Second, following up on Didier et al.\ \cite{didier:meerschaert:pipiras:2016:exponents}, we revisit the problem of the non-identifiability of OFBF by displaying all the possible sets of exponents in dimension $(m,n) = (2,2)$.

\section{Preliminaries}\label{s:preliminaries}

In this section, we lay out the notation and conceptual framework used in the paper.

$M(n)$ and $M(n,\bbC)$ denote, respectively, the spaces of $n \times n$ matrices with real or complex entries, whereas the space of $n \times m$ matrices with real entries is denoted by $M(n, m, \bbR)$. $A^*$ and $\overline{A}$ stand for the Hermitian transpose and conjugate matrix of $A \in M(n, \bbC)$, respectively. ${\mathcal S}_{\geq 0}(n,\bbC)$, ${\mathcal S}_{> 0}(n,\bbC)$, ${\mathcal S}_{\geq 0}(n,\bbR)$, ${\mathcal S}_{> 0}(n,\bbR)$ represent, respectively, the cones of Hermitian symmetric positive semidefinite, Hermitian symmetric positive definite, symmetric positive semidefinite and symmetric positive definite matrices. $O(n)$, $U(n)$ and $SO(n)$ represent the orthogonal, unitary and special orthogonal groups, respectively. A zero matrix of appropriate dimension is denoted by ${\mathbf 0}$. We will make use of the cyclic and dihedral subgroups of $O(2)$ defined by, respectively,
\begin{equation}\label{e:Dv_Cv}
{\mathcal C}_{\nu} = \{O_{k 2\pi/\nu}: k = 1,\hdots, \nu\}, \quad  {\mathcal D}_{\nu} = \{O_{k 2\pi/\nu},F_{k 2\pi/\nu}: k = 1,\hdots, \nu\}, \quad \nu \in \bbN,
\end{equation}
as well as the dihedral group ${\mathcal D}^*_1 = \{I,\textnormal{diag}(-1,1)\}$. In \eqref{e:Dv_Cv}, we denote
\begin{equation}\label{e:Otheta_Ftheta}
SO(2) \ni O_{\theta} = \left(\begin{array}{cc}
\cos \theta & - \sin \theta \\
\sin \theta & \cos \theta
\end{array}\right), \quad
O(2) \backslash SO(2) \ni F_{\theta} = \left(\begin{array}{cc}
\cos \theta & \sin \theta \\
\sin \theta & - \cos \theta
\end{array}\right), \quad \theta \in [0,2 \pi).
\end{equation}
For example, ${\mathcal C}_2 = \{I,-I\}$, ${\mathcal D}_2 = \{I,-I, \textnormal{diag}(1,-1),\textnormal{diag}(-1,1)\}$. We also define the unitary matrix
\begin{equation}\label{e:U2}
U_2 = \frac{\sqrt{2}}{2}\left(\begin{array}{cc}
1 & 1 \\
i & -i
\end{array}\right),
\end{equation}
which allows us to write a (spectral) decomposition of all matrices $SO(2) \ni O_{\theta} = U_2 \textnormal{diag}(e^{-i \theta}, e^{i \theta})U^*_2$, $\theta \in [0,2\pi)$. The matrix $O_{\theta}$ acts by rotating a vector in $\bbR^2$ by an angle $\theta$, whereas the matrix $F_\theta$ reflects it at the angle $\theta/2$.


By Didier et al.\ \cite{didier:meerschaert:pipiras:2016:exponents}, Proposition 2.1, if $X$ is proper, nondegenerate and $X(0) = 0$ a.s., then $G^{\textnormal{dom}}_{1}(X)$ and $G^{\textnormal{ran}}_{1}(X)$ (see \eqref{e:def_G1}) are compact groups. In particular, recall that a compact subgroup ${\mathcal G}$ of $GL(m,\bbR)$ can be written as
\begin{equation}\label{e:G=WOW^(-1)}
{\mathcal G} = W {\mathcal O}W^{-1},
\end{equation}
or equivalently,
\begin{equation}\label{e:G=O}
{\mathcal G} \cong {\mathcal O},
\end{equation}
where ${\mathcal O}$ is a subgroup of $O(m)$ and the matrix $W \in {\mathcal S}_{> 0}(n,\bbR)$ is called a conjugacy (see Hudson and Mason \cite{hudson:mason:1982}, p.\ 285). Denote the classes of all possible domain groups, range groups, and pairs of domain and range groups, respectively, by
\begin{equation}
\begin{split}
{\Bbb G}^{\textnormal{dom}}_m &= \hspace{1mm} \{G^{\textnormal{dom}}_{1}(X): \textnormal{for some $n \in \bbN$, $X = \{X(t)\}_{t \in \bbR^m}$ is an $\bbR^n$-valued OFBF}\},\\
{\Bbb G}^{\textnormal{ran}}_n &= \hspace{1mm} \{G^{\textnormal{ran}}_{1}(X): \textnormal{for some $m \in \bbN$, $X = \{X(t)\}_{t \in \bbR^m}$ is an $\bbR^n$-valued OFBF}\},\\
{\Bbb G}_{m,n} \hspace{2mm} &= \hspace{1mm} \{(G^{\textnormal{dom}}_{1}(X),G^{\textnormal{ran}}_{1}(X)): \textnormal{$X = \{X(t)\}_{t \in \bbR^m}$ is an $\bbR^n$-valued OFBF}\}.
\end{split}
\end{equation}
Note that ${\Bbb G}^{\textnormal{dom}}_m \times {\Bbb G}^{\textnormal{ran}}_n \supseteq {\Bbb G}_{m,n}$, but the converse, in principle, may not hold (indeed, see Theorem \ref{t:OFBF_m,2} below). For notational simplicity, we will drop the subscripts and write
\begin{equation}\label{e:def_groups}
{\Bbb G}^{\textnormal{dom}}, \hspace{2mm}{\Bbb G}^{\textnormal{ran}} , \hspace{2mm}{\Bbb G}.
\end{equation}

Let $X = \{X(t)\}_{t \in \bbR^m}$ be an OFBF satisfying
\begin{equation}\label{e:minReeig(H)=<maxReeig(H)<minReeig(E*)}
0 < \min \Re \hspace{0.5mm} \textnormal{eig}(H) \leq \max \Re \hspace{0.5mm}\textnormal{eig}(H) < \min \Re \hspace{0.5mm}\textnormal{eig}(E^*),
\end{equation}
where
\begin{equation}\label{e:eig(M)}
\textnormal{eig}(M)
\end{equation}
denotes the set of eigenvalues of a matrix $M$. Throughout the paper, we will assume that the matrices $E$ and $H$ satisfy condition \eqref{e:minReeig(H)=<maxReeig(H)<minReeig(E*)}. Moreover, since the relation \eqref{e:o.s.s.} can be rewritten with $E/\min \Re \hspace{0.5mm}\textnormal{eig}(E^*)$ and $H/\min \Re \hspace{0.5mm}\textnormal{eig}(E^*)$ in place of $E$ and $H$, respectively, we will further assume without loss of generality that the normalization
\begin{equation}\label{e:minReE*=1}
\min \Re \hspace{0.5mm}\textnormal{eig}(E^*) = 1
\end{equation}
holds. Under \eqref{e:minReeig(H)=<maxReeig(H)<minReeig(E*)}, in Baek et al.\ \cite{baek:didier:pipiras:2014}, Theorem 3.1, it is shown that the OFBF $X$ admits a harmonizable representation of the form
\begin{equation}\label{e:OFBF_harmonizable_representation}
\{X(t)\}_{t \in \bbR^m} \stackrel{{\mathcal L}}= \Big\{ \int_{\bbR^m} (e^{i \langle t, x \rangle} - 1)\widetilde{B}_F(dx)\Big\}_{t \in \bbR^m}.
\end{equation}
The term $\widetilde{B}_F(dx)$ is a Hermitian Gaussian random measure whose ${\mathcal S}_{\geq 0}(n,\bbC)$-valued control measure
\begin{equation}\label{e:F_X(dx)=EB(dx)B(dx)}
F_X(dx) ={\Bbb E}\widetilde{B}_F(dx)\widetilde{B}_F(dx)^*
\end{equation}
satisfies the integrability condition
\begin{equation}\label{e:int_x^2/(1+x^2)Fx(dx)}
\int_{\bbR^m} \frac{\| x\|^2}{1 + \| x\|^2} F_X(dx) < \infty,
\end{equation}
where $\| \cdot \|$ is the Euclidean norm. Moreover, the spectral measure $F_X(dx)$ is $(E^*,-2H)$-homogeneous, i.e.,
\begin{equation}\label{e:FX(dx)_is_homogeneous}
F_X(c^{E^*}dx) =c^{-H}F_X(dx)c^{-H^*}, \quad c > 0,
\end{equation}
and
\begin{equation}\label{e:operator_scaling_under_a.c.}
{\mathcal S}_{\geq 0} (n,\bbC) \ni f_X(c^{E^*} x) = c^{-H_E}f_X(x)c^{-H^*_E} \hspace{2mm}dx\textnormal{-a.e.}, \quad c > 0,
\end{equation}
whenever a spectral density $f_X(x) = \frac{F_X(dx)}{dx}$ exists, where $\textnormal{tr}(\cdot)$ denotes the trace. In \eqref{e:operator_scaling_under_a.c.}, we define
\begin{equation}\label{e:H_E}
H_E = H + \frac{\textnormal{tr}(E)}{2}I.
\end{equation}
The existence of a spectral density is guaranteed in the particular case of an operator fractional Brownian motion (OFBM), i.e., an OFBF in dimension $(1,n)$ with $E = 1$, for which
\begin{equation}\label{e:fX(x)_OFBM}
f_X(x) = x^{-(H + I/2)}_+ AA^* x^{-(H^* + I/2)}_+ + x^{-(H + I/2)}_- \overline{AA^*} x^{-(H^* + I/2)}_- \quad dx\textnormal{--a.e.}
\end{equation}
for some $A \in M(n,\bbC)$ (see Didier and Pipiras \cite{didier:pipiras:2011}, Theorem 3.1). In particular, the OFBM class is parametrized by the triplet of real matrices
\begin{equation}\label{e:(H,ReAA*,ImAA*)}
(H, \Re (AA^*), \Im (AA^*)),
\end{equation}
which we call scaling ($H$) and spherical parameters ($AA^*$, or $\Re (AA^*)$ and $\Im (AA^*)$). It can be shown (Didier and Pipiras \cite{didier:pipiras:2011}, Theorem 3.1) that an OFBM is time-reversible, namely, $\{B_{H}(-t)\}_{t \in \bbR} \stackrel{{\mathcal L}}=  \{B_H(t)\}_{t \in \bbR}$, if and only if
\begin{equation}\label{e:time-reversible}
\Im (AA^*)= {\mathbf 0}.
\end{equation}

\begin{remark}\label{r:OFBM_helps_with_OFBF}
Beyond being a particular case of OFBF when $m=1$ and $n \geq 1$, OFBM is of direct interest in this paper since, based on the polar-harmonizable representation of OFBF (Proposition \ref{p:akin_to_Prop2.3_Bierme_etal2007} below), arguments for OFBF can often be reduced to an argument for OFBM by fixing the so-named spherical component of OFBF (see also Remark \ref{r:polar} below). OFBM is also used to illustrate some of the results in the paper.
\end{remark}

Let ${\mathcal R}^m$ denote either ${\mathbb R}^m$ or ${\mathbb R}^m \backslash\{0\}$. Also let
\begin{equation}\label{e:mu:B(Rm)->S+(n,C)}
\mu: {\mathcal B}({\mathcal R}^m) \rightarrow {\mathcal S}_{\geq 0}(n,\bbC)
\end{equation}
be an (entry-wise $\bbC$-valued) measure whose measure induced by the maximum eigenvalue is $\sigma$-finite. Given a linear operator $A \in M(m,\bbR)$, we define the measure
\begin{equation}\label{e:muA(B)=Mu(A^(-1)B)}
\mu_A(B)=\mu(A^{-1}B)
\end{equation}
on Borel sets $B$, where $A^{-1}B$ is the preimage of the set $B$. Equivalently, we can write $\mu_A(dx)=\mu(A^{-1}dx)$. In the analysis of symmetry groups \eqref{e:def_G1}, we shall use the symmetry groups of a measure $\mu$ as defined next.
\begin{definition}\label{def:symm_group_FX}
Let $\mu$ be as in \eqref{e:mu:B(Rm)->S+(n,C)}. The domain and range symmetry sets of $\mu$ are, respectively,
\begin{equation}\label{e:domain_symmetry_group_mu(dx)}
{\mathcal S}^{\textnormal{dom}}(\mu) = \{C \in M(m,\bbR): \mu_C(B) = \mu(B), \hspace{1mm}B \in {\mathcal B}({\mathcal R}^m)\},
\end{equation}
\begin{equation}\label{e:range_symmetry_group_mu(dx)}
{\mathcal S}^{\textnormal{ran}}(\mu) = \{C \in M(n,\bbR): C \mu(B)C^* = \mu(B), \hspace{1mm}B \in {\mathcal B}({\mathcal R}^m)\}.
\end{equation}
\end{definition}


We will need the notion of support of a matrix-valued measure, which is laid out next.
\begin{definition}
Consider the measure $\mu$ as in \eqref{e:mu:B(Rm)->S+(n,C)}. We define
$$
\textnormal{supp}\hspace{0.5mm}\{\mu\}  = \{x \in S: \textnormal{for any open set $U$, $x \in U$ $\Rightarrow$ $\mu(U) \neq {\mathbf 0}$}\}.
$$
\end{definition}
We will use changes of variables into polar coordinates, as discussed in Bierm\'{e} et al.\ \cite{bierme:meerschaert:scheffler:2007}, p.\ 314. Let $E$ be as in \eqref{e:minReeig(H)=<maxReeig(H)<minReeig(E*)}. Then, there exists a norm $\| \cdot \|_0$ on $\bbR^m$ for which
\begin{equation}\label{e:Phi_change-of-variables}
\Psi:(0,\infty) \times S_0 \rightarrow \bbR^m \backslash\{0\},
\quad \Psi(r,\theta):=r^{E}\theta,
\end{equation}
is a homeomorphism, where
\begin{equation}\label{e:S0}
S_0 = \{x \in \bbR^m: \norm{x}_{0}=1\}.
\end{equation}
Then, one can uniquely write the polar representation
\begin{equation}\label{e:x=tau(x)E*l(x)}
\bbR^m \backslash\{0\} \ni x = \tau(x)^E l(x),
\end{equation} where the functions $\tau(x)$, $l(x)$ -- which depend on $E$ -- are called the radial and directional parts of $x$, respectively. One such norm $\| \cdot \|_0$ may be calculated explicitly by means of the expression
\begin{equation}\label{e:E0_norm}
\norm{x}_{0} = \int^{1}_{0} \|t^{E}x\|\frac{dt}{t},
\end{equation}
where $\| \cdot \|$ is any norm in $\bbR^m$.
The uniqueness of the representation (\ref{e:x=tau(x)E*l(x)}) yields
\begin{equation}\label{e:tau_E}
\tau(c^E x) = c \tau(x), \quad l(c^E x) = l(x).
\end{equation}
In particular, if $\|\cdot\|_0$ is the Euclidean norm, then $S_0 = S^{m-1}$, where the latter denotes the ordinary Euclidean sphere.

\section{Main results}\label{s:structural_results}

\subsection{On the characterization of ${\Bbb G}^{\textnormal{ran}}$}\label{s:Gran1}

Recall that $G^{\textnormal{ran}}_1(X)$ and ${\Bbb G}^{\textnormal{ran}}$ are defined in \eqref{e:def_G1} and \eqref{e:def_groups}, respectively. In this section, we find necessary and sufficient conditions for a group to be the range symmetry group $G^{\textnormal{ran}}_1(X)$ of some OFBF $X$, and explicitly describe all possible range groups (i.e., ${\Bbb G}^{\textnormal{ran}}$) in dimension 2.

The next lemma relates the range symmetries of an OFBF $X$ to those of the spectral measure $F_X(dx)$ in \eqref{e:F_X(dx)=EB(dx)B(dx)}, or of the spectral density $f_X(x) = \frac{F_X(dx)}{dx}$ when it exists. It is more convenient to work with and characterize the domain symmetries of the spectral measure. To state the lemma, recall that a measure defined on the Borel $\sigma$-algebra of a Hausdorff space is called a Radon measure when it is both locally finite and inner regular (Bauer \cite{bauer:2001}, p.\ 155).
\begin{lemma}\label{l:Gran1_via_specmeas}
Let $X$ be an OFBF with harmonizable representation \eqref{e:OFBF_harmonizable_representation} and spectral measure $F_X(dx)$ as in \eqref{e:F_X(dx)=EB(dx)B(dx)}. Then,
\begin{equation}\label{e:Gran1=S(FX)*}
G^{\textnormal{ran}}_1(X) = {\mathcal S}^{\textnormal{ran}}(F_X),
\end{equation}
where the term on the right-hand side of \eqref{e:Gran1=S(FX)*} is the range symmetry set defined by \eqref{e:range_symmetry_group_mu(dx)}. In particular, ${\mathcal S}^{\textnormal{ran}}(F_X)$ is a compact group. Let $H_E$ be as in \eqref{e:H_E}. If, in addition, $F_X(dx)$ is absolutely continuous (a.c.) with density $f_X(x)$, then $B$ is a range symmetry of $X$ (i.e., $B \in G^{\textnormal{ran}}_1(X)$) if and only if
\begin{equation}\label{e:Gran1_in_terms_of_fX}
Br^{-H_{E}}f_X(\theta)r^{-H^*_{E}}B^* = r^{-H_{E}}f_X(\theta)r^{-H^*_{E}}, \quad \theta \in \Theta^c_B, \quad r > 0
\end{equation}
($\Theta^c_B$ denoting the complement of $\Theta_B$). In \eqref{e:Gran1_in_terms_of_fX}, $\Theta_B$ is some set in ${\mathcal B}(S_0)$ depending on $B$, $S_0$ is the sphere \eqref{e:S0} associated with $E^*$, and $\sigma(\Theta_B) = 0$ for some finite Radon measure on ${\mathcal B}(S_0)$ that does not depend on $B$. If, in addition, $f_X $ is continuous on $S_0$, then the condition \eqref{e:Gran1_in_terms_of_fX} is equivalent to
\begin{equation}\label{e:Gran1_in_terms_of_fX_continuous_on_S0}
Bf_X(x)B^* = f_X(x), \quad x \neq 0.
\end{equation}
\end{lemma}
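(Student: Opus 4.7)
The plan has three pieces: first establish the identification \eqref{e:Gran1=S(FX)*} by pushing the range-symmetry condition through the harmonizable representation \eqref{e:OFBF_harmonizable_representation}; second, transport the resulting identity on $F_X$ to a pointwise condition on $f_X$ via the polar change of variables \eqref{e:Phi_change-of-variables} and the scaling relation \eqref{e:operator_scaling_under_a.c.}; and third, upgrade the $\sigma$-a.e.\ equalities to pointwise equalities under the extra continuity assumption. I expect the injectivity of the map ``spectral measure $\mapsto$ covariance'' to be the main technical step.

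For the first piece, since $X$ is centered Gaussian, $B\in G^{\textnormal{ran}}_1(X)$ iff the covariance of $\{BX(t)\}$ matches that of $\{X(t)\}$ at every pair $(t,s)\in\bbR^m\times\bbR^m$. Linearity of the stochastic integral turns $BX$ into a harmonizable process with control measure $BF_X(\cdot)B^*$, so the condition reduces to
\[
\int_{\bbR^m} (e^{i\langle t,x\rangle}-1)(e^{-i\langle s,x\rangle}-1)\,[BF_X(dx)B^*-F_X(dx)] = {\mathbf 0},\qquad t,s\in\bbR^m.
\]
Specializing $t=s$ yields the variogram-type identity $\int 2(1-\cos\langle t,x\rangle)\,d\rho(x) = {\mathbf 0}$ for all $t\in\bbR^m$, with $\rho := BF_XB^* - F_X$. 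The standard uniqueness of the L\'evy measure in a L\'evy-Khintchine representation, adapted to the Hermitian matrix-valued setting under \eqref{e:int_x^2/(1+x^2)Fx(dx)} (no atom at the origin by the homogeneity \eqref{e:FX(dx)_is_homogeneous} combined with $\min\Re\,\textnormal{eig}(H)>0$), then forces $\rho={\mathbf 0}$, i.e., $B\in{\mathcal S}^{\textnormal{ran}}(F_X)$. Compactness of ${\mathcal S}^{\textnormal{ran}}(F_X)$ is inherited from the compactness of $G^{\textnormal{ran}}_1(X)$ recalled in the preliminaries.

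For the absolutely continuous case, the previous step gives $B\in G^{\textnormal{ran}}_1(X)$ iff $Bf_X(x)B^*=f_X(x)$ for $dx$-a.e.\ $x$. Under the polar decomposition \eqref{e:x=tau(x)E*l(x)} induced by $E^*$, the restriction of $dx$ to $\bbR^m\setminus\{0\}$ pushes forward to a measure on $(0,\infty)\times S_0$ absolutely continuous with respect to a product of $dr$ with some finite Radon measure $\sigma$ on ${\mathcal B}(S_0)$ depending only on $E^*$. By Fubini, the $dx$-a.e.\ identity is equivalent to the existence of $\Theta_B\in{\mathcal B}(S_0)$ with $\sigma(\Theta_B)=0$ for which $Bf_X(r^{E^*}\theta)B^*=f_X(r^{E^*}\theta)$ holds, for each $\theta\in\Theta_B^c$, at a.e.\ $r>0$. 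Substituting the scaling \eqref{e:operator_scaling_under_a.c.} converts this into
\[
B\,r^{-H_E} f_X(\theta)\,r^{-H_E^*} B^* = r^{-H_E} f_X(\theta)\,r^{-H_E^*}\quad\text{for a.e.}\ r>0,
\]
and since both sides are real-analytic in $r\in(0,\infty)$ for each fixed $\theta$, a.e.\ equality in $r$ forces equality for every $r>0$, which is exactly \eqref{e:Gran1_in_terms_of_fX}. Finally, when $f_X$ is continuous on $S_0$, the scaling \eqref{e:operator_scaling_under_a.c.} produces a continuous representative of $f_X$ on $\bbR^m\setminus\{0\}$; both sides of the identity $Bf_X(x)B^*=f_X(x)$ are then continuous, so the $dx$-a.e.\ equality promotes to every $x\neq 0$, giving \eqref{e:Gran1_in_terms_of_fX_continuous_on_S0}.
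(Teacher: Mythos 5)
Your route is essentially the paper's: reduce membership in $G^{\textnormal{ran}}_1(X)$ to equality of covariances via Gaussianity, identify that with ${\mathcal S}^{\textnormal{ran}}(F_X)$ through injectivity of the map from spectral measure to covariance, then disintegrate Lebesgue measure into polar coordinates, use Fubini to isolate a $\sigma$-null exceptional set $\Theta_B$ of directions, and upgrade from a.e.\ $r$ to all $r>0$ (you do this by real-analyticity of $r\mapsto r^{-H_E}$, the paper by a sequence-and-continuity argument; both are fine). Your justification of the injectivity step --- specializing to $t=s$, applying uniqueness of symmetric L\'evy measures to the positive measures $v^*BF_XB^*v$ and $v^*F_Xv$, and polarizing --- is in fact more explicit than the paper's one-line assertion, and the observation that the origin carries no mass is the right thing to check there.

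The one step that does not quite parse as written is ``substituting the scaling \eqref{e:operator_scaling_under_a.c.}.'' That relation holds only $dx$-a.e., and $S_0$ is Lebesgue-null, so for an a.e.-defined density neither the value $f_X(\theta)$ at $\theta\in S_0$ nor the identity $f_X(r^{E^*}\theta)=r^{-H_E}f_X(\theta)r^{-H_E^*}$ for fixed $\theta$ and a.e.\ $r$ is meaningful. The paper closes this by first invoking Lemma 2.1 of Baek, Didier and Pipiras (2014) to replace $f_X$ with a version satisfying the homogeneity \eqref{e:specdens_homogeneous} everywhere on $\bbR^m\setminus\{0\}$; once such a version is fixed, your Fubini and analyticity steps go through verbatim, and the same fix underwrites your final claim that the scaling ``produces a continuous representative on $\bbR^m\setminus\{0\}$'' when $f_X$ is continuous on $S_0$. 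This is a small, fixable omission rather than a wrong idea, but it is the hinge on which the pointwise statement \eqref{e:Gran1_in_terms_of_fX} turns, so it should be made explicit.
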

\begin{proof}
Since $X$ is Gaussian,
\begin{equation}\label{e:Gran1<=>commutativity}
B \in {\Bbb G}^{\textnormal{ran}}_1(X) \Leftrightarrow B \hspace{0.5mm}{\Bbb E}[X(s)X(t)^*]B^* = {\Bbb E}[X(s)X(t)^*], \quad s,t \in \bbR^m.
\end{equation}
Then, the claim \eqref{e:Gran1=S(FX)*} is a consequence of the harmonizable representation \eqref{e:OFBF_harmonizable_representation} and the definition \eqref{e:domain_symmetry_group_mu(dx)} (the compactness of ${\mathcal S}^{\textnormal{ran}}(F_X)$ then results from that of $G^{\textnormal{ran}}_1(X)$).

So, fix $B \in GL(n,\bbR)$. Assuming $F_X(dx)$ has a density $f_X(x)$, then again by \eqref{e:OFBF_harmonizable_representation}, $B$ is a range symmetry of $X$ if and only if
\begin{equation}\label{e:Gran1_in_terms_of_fX_dx-a.e.}
Bf_X(x)B^* = f_X(x) \quad dx\textnormal{-a.e.}
\end{equation}
By Lemma 2.1 in Baek et al.\ \cite{baek:didier:pipiras:2014}, we can assume that the density $f_X$ is $(E^*,-2 H_E)$-homogeneous for $H_E$ as in \eqref{e:H_E}, namely, it satisfies the relation
\begin{equation}\label{e:specdens_homogeneous}
f_X(c^{E^*}x) = c^{-H_{E}}f_X(x)c^{-H^*_{E}}, \quad x \in \bbR^{m}\backslash\{0\}, \quad c > 0.
\end{equation}
Let
$$
N_B = \{x = r^{E^*}\theta \in \bbR^{m}\backslash \{0\}: B r^{-H_E}f_X(\theta)r^{-H^*_E}B^* \neq r^{-H_E}f_X(\theta) r^{-H^*_E}\},
$$
i.e., $N_B$ is the set of points (expressed in polar coordinates \eqref{e:Phi_change-of-variables}) where the equality in \eqref{e:Gran1_in_terms_of_fX_dx-a.e.} does not hold. For any fixed $\theta \in S_0$, define the set
$$
R_{\theta} = \{r > 0: B r^{-H_E}f_X(\theta)r^{-H^*_E}B^* \neq r^{-H_E}f_X(\theta) r^{-H^*_E}\}.
$$
Further define
$$
\Theta_{0} = \{\theta \in S_{0}: R_{\theta} \textnormal{ has positive $\bbR$-Lebesgue measure}\}.
$$
By \eqref{e:Gran1_in_terms_of_fX_dx-a.e.}, the $\bbR^m$-Lebesgue measure of $N_B$ is zero. Hence, by using Proposition 2.3 in Bierm\'{e} et al.\ \cite{bierme:meerschaert:scheffler:2007} in the second equality below,
$$
0 = \int_{\bbR^m} 1_{N_B}(x) dx = \int_{S_0}\int^{\infty}_0 1_{\{r^{E^*}\theta \in N_B\}}r^{\textnormal{tr}(E^*)-1}dr \sigma(d \theta)
= \int_{S_0} \int^{\infty}_0  1_{\Theta_0}(\theta) 1_{R_{\theta}}(r) r^{\textnormal{tr}(E^*)-1} dr \sigma(d \theta)
$$
for some finite Radon measure $\sigma(d \theta)$ on ${\mathcal B}(S_0)$. Therefore, $\sigma(\Theta_{0}) = 0$. Now consider $x_* = r^{E^*}_0 \theta_0\neq 0$ such that $\theta_0 := l(x_*) \in \Theta^c_0$. Then, $R_{\theta_*}$ has $\bbR$-Lebesgue measure zero. Therefore, by using \eqref{e:Gran1_in_terms_of_fX_dx-a.e.} and \eqref{e:specdens_homogeneous} we can choose a sequence $\{x_n\}_{n \in \bbN}$, $x_n = r^{E^*}_n \theta_0$, such that, as $n \rightarrow \infty$,
$$
B f_X(x_*)B^* = B r^{-H_E}_0 f_X(\theta_0)r^{-H^*_E}_0 B \leftarrow B r^{-H_E}_n f_X(\theta_0)r^{-H^*_E}_n B =B  f_X(x_n) B
$$
\begin{equation}\label{e:Bf(x)B*=f(x)}
= f_X(x_n)= r^{-H_E}_n f_X(\theta_0)r^{-H^*_E}_n \rightarrow r^{-H_E}_0 f_X(\theta_0)r^{-H^*_E}_0 = f_X(x_*).
\end{equation}
Thus, \eqref{e:Gran1_in_terms_of_fX} holds.

Now assume, in addition, that $f_X $ is continuous on $S_0$. The argument leading to \eqref{e:Bf(x)B*=f(x)} can be extended to establish \eqref{e:Gran1_in_terms_of_fX_continuous_on_S0}. In fact, pick $x_0 = r^{E^*}_0 \theta_0 \in N_B$. Since the $\bbR^m$-Lebesgue measure of $N_B$ is zero, there is a sequence $\{x_n\}_{n \in \bbN} = \{r^{E^*}_n \theta_n\}_{n \in \bbN}  \subseteq N^c_B$ such that $x_n \rightarrow x_0$ as $n \rightarrow \infty$. Thus, $r_n \rightarrow r_0$ and $\theta_n \rightarrow \theta_0$, since the function $\Psi$ in \eqref{e:Phi_change-of-variables} is a homeomorphism. Therefore, by replacing $r^{-H_E}_n f_X(\theta_0)r^{-H^*_E}_n$ with $r^{-H_E}_n f_X(\theta_n)r^{-H^*_E}_n$, and $x_*$ with $x_0$ in \eqref{e:Bf(x)B*=f(x)}, \eqref{e:Gran1_in_terms_of_fX_continuous_on_S0} holds. $\Box$
%
\end{proof}
\begin{example}
In the case of an OFBM $X$, by \eqref{e:fX(x)_OFBM} we can assume that the density is continuous except at zero. Therefore, by \eqref{e:Gran1_in_terms_of_fX_continuous_on_S0}, we have that $B \in G^{\textnormal{ran}}_1(X)$ if and only if, for $x \neq 0$,
$$
B x^{-(H + I/2)}_{+} AA^* x^{-(H^* + I/2)}_{+} B^* = x^{-(H + I/2)}_{+} AA^* x^{-(H^* + I/2)}_{+}
$$
$$
\textnormal{and} \quad B x^{-(H + I/2)}_{-} \overline{AA^*} x^{-(H^* + I/2)}_{-} B^* = x^{-(H + I/2)}_{-} \overline{AA^*} x^{-(H^* + I/2)}_{-},
$$
where the two conditions are equivalent by taking complex conjugates.
\end{example}

\begin{remark}
When $G^{\textnormal{ran}}_1(X) \subseteq O(n)$ (i.e., if we can assume that $W = I$ in \eqref{e:G=WOW^(-1)}), the statement \eqref{e:Gran1<=>commutativity} can be rewritten as a set of commutativity relations, i.e.,
$$
G^{\textnormal{ran}}_1(X) = \bigcap_{s,t \hspace{0.5mm}\in \hspace{0.5mm}\bbR} \{O \in O(n): O \hspace{0.5mm}{\Bbb E}X(s)X(t)^* = {\Bbb E}X(s)X(t)^*O\}
$$
$$
= \bigcap_{B \hspace{0.5mm}\subseteq \hspace{0.5mm}\textnormal{supp}\hspace{0.5mm}F_X} \{O \in O(n): O \hspace{0.5mm}F_X(B) = F_X(B)O\},
$$
where the second equality is a consequence of \eqref{e:Gran1=S(FX)*}. Furthermore, in the absolutely continuous case and assuming $f_X$ is continuous on $S_0$, by \eqref{e:Gran1_in_terms_of_fX_continuous_on_S0} we can write
$$
G^{\textnormal{ran}}_1(X)= \bigcap_{x \neq 0} \{O \in O(n): O \hspace{0.5mm}f_X(x) = f_X(x)O\}.
$$
\end{remark}

Proposition \ref{p:akin_to_Prop2.3_Bierme_etal2007}, to be stated and shown next, establishes a formula for a change of measure into (anisotropic) polar coordinates, which in turn yields a polar-harmonizable representation for OFBF. In this reinterpretation of the covariance function of OFBF, the domain exponent $E$ influences the spherical component of OFBF, whereas $H$ determines the decay of the spectral measure in each spherical direction. Before stating and proving the proposition, it is useful to revisit the case of OFBM, characterized by \eqref{e:fX(x)_OFBM} and \eqref{e:(H,ReAA*,ImAA*)}. Expression \eqref{e:HFX(1,infty)+FX(1,infty)H*=AA*} below, involving the spectral measure and parameters of OFBM, will be used in the proof of the ensuing proposition.
\begin{example}\label{ex:HF_X+F_XH*=AA*}
By the homogeneity relation \eqref{e:FX(dx)_is_homogeneous} with $E = 1$ (see \eqref{e:minReE*=1}), $c = x$ and $dx = [1,\infty)$, the spectral measure of an OFBM $X$ satisfies the relation
$$
F_X[x,\infty) = x^{-H}F_X[1,\infty)x^{-H^*}, \quad x > 0.
$$
Consequently, by \eqref{e:fX(x)_OFBM},
$$
x^{-H}\{HF_X[1,\infty) + F_X[1,\infty)H^*\}x^{-H^*} x^{-1} = - \frac{d}{dx}F_X[x,\infty)= f_X(x) = x^{-H} AA^*  x^{-H^*} x^{-1} \quad dx \textnormal{--a.e.}
$$
for $x > 0$. Thus,
\begin{equation}\label{e:HFX(1,infty)+FX(1,infty)H*=AA*}
HF_X[1,\infty) + F_X[1,\infty)H^* = AA^*.
\end{equation}
In particular, the left-hand side of \eqref{e:HFX(1,infty)+FX(1,infty)H*=AA*} is Hermitian positive semidefinite (n.b.: if $B \in {\mathcal S}_{\geq 0}(n,\bbC)$ and $H$ has eigenvalues with positive real parts, it is not generally true that $HB+BH^* \in {\mathcal S}_{\geq 0}(n,\bbC)$).
\end{example}

In the following proposition, $\overline{{\mathcal S}_{\geq 0}(n,\bbC)}$ denotes the cone of extended Hermitian positive semidefinite matrices, obtained from ${\mathcal S}_{\geq 0}(n,\bbC)$ by including matrix limits with infinite maximum eigenvalues.
\begin{proposition}\label{p:akin_to_Prop2.3_Bierme_etal2007}
Let $F_X(dx):{\mathcal B}(\bbR^m) \rightarrow \overline{{\mathcal S}_{\geq 0}(n,\bbC)}$ be the spectral measure \eqref{e:F_X(dx)=EB(dx)B(dx)} under the assumption that
\begin{equation}\label{e:maxeigLambda(Theta)>0=>minReeigLambda(Theta)>0}
\infty \geq \max \textnormal{eig} \hspace{0.5mm} F_X(B) > 0 \Rightarrow \min  \textnormal{eig} \hspace{0.5mm}F_X(B) > 0, \quad B \in {\mathcal B}(\bbR^m ).
\end{equation}
Then,
\begin{equation}\label{e:change_of_variables_into_polar_coordinates}
F_X(B) = \int^{\infty}_{0}\int_{S_0} 1_{\{r^{E^*}\theta \in  B\}}r^{-H}\Delta(d \theta) r^{-H^*}r^{-1}dr, \quad  B \in {\mathcal B}(\bbR^m),
\end{equation}
for some entry-wise finite, Hermitian Borel measure $\Delta: {\mathcal B}(S_0)\rightarrow {\mathcal S}_{\geq 0}(n,\bbC)$. In particular, the covariance function $\Gamma(t_1,t_2)= {\Bbb E}X(t_1)X(t_2)^*$ of an OFBF $X$ has a harmonizable representation in polar coordinates
\begin{equation}\label{e:int-rep-spectral-F}
\Gamma(t_1,t_2) = \int^{\infty}_{0}\int_{S_0}(e^{i\langle t_1,r^{E^*}\theta \rangle} - 1)(e^{-i\langle t_2,r^{E^*}\theta \rangle} - 1)r^{-H}\Delta(d \theta)r^{-H^*}r^{-1}dr, \quad t_1,t_2 \in \bbR^m.
\end{equation}
Conversely, if the function $\Gamma(t_1,t_2)$ as defined in \eqref{e:int-rep-spectral-F} is such that
\begin{equation}\label{e:Gamma(t,t)_is_pos_def}
\Gamma(t,t) \textnormal{ is a positive definite matrix for all $t \neq 0$},
\end{equation}
then it is the covariance function of an OFBF with exponents $(E,H)$.
\end{proposition}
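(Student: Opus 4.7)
My plan is to handle both directions through a single reduction: pushing the matrix-valued spectral measure $F_X$ through the polar homeomorphism $\Psi(r,\theta)=r^{E^*}\theta$ from \eqref{e:Phi_change-of-variables} and exploiting the $(E^*,-2H)$-homogeneity \eqref{e:FX(dx)_is_homogeneous} to force the radial factor into the explicit power-type form $r^{-H}(\cdot)r^{-H^*}r^{-1}dr$. I would first transport $F_X$ to a matrix-valued measure $\widetilde F_X$ on $(0,\infty)\times S_0$ via $\Psi^{-1}$, and, for each Borel $B\subseteq S_0$, consider the radial section $\mu_B(I):=\widetilde F_X(I\times B)$. The homogeneity \eqref{e:FX(dx)_is_homogeneous} together with \eqref{e:tau_E} translates into the matrix-scaling relation $\mu_B(cI)=c^{-H}\mu_B(I)c^{-H^*}$ for every $c>0$ and every Borel $I\subseteq(0,\infty)$.

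The crux is a rigidity lemma: any locally finite $\overline{\mathcal S_{\geq 0}(n,\bbC)}$-valued Borel measure $\mu$ on $(0,\infty)$ satisfying $\mu(cI)=c^{-H}\mu(I)c^{-H^*}$ is of the form $\mu(dr)=r^{-H}K r^{-H^*}r^{-1}\,dr$ for a unique $K$. To prove it I set $f(s):=\mu([1,e^s])$ for $s\geq 0$; then $f$ is Loewner-monotone, right-continuous, Hermitian-valued, and satisfies the twisted-Cauchy equation $f(s+t)=f(s)+e^{-sH}f(t)e^{-sH^*}$ with $f(0)=\mathbf 0$. Decomposing each entry of $f$ into differences of finite positive measures, Lebesgue's differentiation theorem supplies some $s_0>0$ at which $f'(s_0)$ exists entry-wise; the identity $h^{-1}[f(s_0+h)-f(s_0)]=e^{-s_0 H}[h^{-1}f(h)]e^{-s_0 H^*}$, together with the invertibility of $e^{-s_0H}$, then forces $K:=f'(0^+)$ to exist, and closedness of the cone yields $K\in\overline{\mathcal S_{\geq 0}(n,\bbC)}$. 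Iterating the functional equation delivers $f'(s)=e^{-sH}K e^{-sH^*}$ for all $s$, and integration gives the lemma. Defining $\Delta(B):=K_B$ by applying the lemma to $\mu_B$, uniqueness promotes $B\mapsto\Delta(B)$ to a countably additive Hermitian measure; entry-wise finiteness of $\Delta(S_0)$ follows from the integrability \eqref{e:int_x^2/(1+x^2)Fx(dx)} combined with \eqref{e:minReeig(H)=<maxReeig(H)<minReeig(E*)} and \eqref{e:minReE*=1}, while hypothesis \eqref{e:maxeigLambda(Theta)>0=>minReeigLambda(Theta)>0} rules out pathological sections where $\Delta(B)$ could fail to lie in $\mathcal S_{\geq 0}(n,\bbC)$. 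A monotone class argument promotes \eqref{e:change_of_variables_into_polar_coordinates} from product sets $\Psi(I\times B)$ to all Borel sets of $\bbR^m$, and substituting into \eqref{e:OFBF_harmonizable_representation} together with Fubini yields \eqref{e:int-rep-spectral-F}.

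For the converse, I would define $F_X$ directly by the right-hand side of \eqref{e:change_of_variables_into_polar_coordinates}: homogeneity \eqref{e:FX(dx)_is_homogeneous} is immediate from the substitution $r\mapsto cr$, and integrability \eqref{e:int_x^2/(1+x^2)Fx(dx)} follows from pointwise radial estimates, since on the diagonal the integrand decays like $r^{1-2\max\Re\,\textnormal{eig}(H)}$ near $r=0$ and like $r^{-1-2\min\Re\,\textnormal{eig}(H)}$ near $r=\infty$, both integrable under \eqref{e:minReeig(H)=<maxReeig(H)<minReeig(E*)}. Constructing $X$ via the harmonizable representation \eqref{e:OFBF_harmonizable_representation} then yields a zero-mean, stationary-increment Gaussian field with covariance \eqref{e:int-rep-spectral-F}; the operator self-similarity \eqref{e:o.s.s.} is inherited from the homogeneity of $F_X$, and \eqref{e:Gamma(t,t)_is_pos_def} guarantees that $X(t)$ is full for all $t\neq 0$. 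The main obstacle throughout is the rigidity lemma: the passage from Lebesgue differentiability at \emph{some} $s_0>0$ (which is free from monotonicity) to differentiability at $0^+$ (needed to extract $K$) uses the functional equation in an essential way, and this is what makes the matrix-valued, $H$-homogeneous setting genuinely subtler than the scalar analogue in Bierm\'{e} et al.\ \cite{bierme:meerschaert:scheffler:2007}.
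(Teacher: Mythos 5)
Your proposal is correct in substance but establishes the key step by a genuinely different route. The paper also reduces to radial sections: it forms $G_{X,\Theta}[a,b) = F_X(A(a,\Theta)) - F_X(A(b,\Theta))$ on sets $A(r,\Theta)=\{\tau(x)\ge r,\ l(x)\in\Theta\}$, checks the integrability $\int |e^{its}-1|^2\,G_{X,\Theta}(ds)<\infty$ so that $G_{X,\Theta}$ is the spectral measure of a \emph{proper} OFBM (properness is exactly where hypothesis \eqref{e:maxeigLambda(Theta)>0=>minReeigLambda(Theta)>0} enters), and then invokes the known spectral-density characterization of OFBM (Didier and Pipiras \cite{didier:pipiras:2011}, Theorem 3.1) to obtain the density $r^{-H}(\cdot)r^{-H^*}r^{-1}$; the spherical measure is then identified explicitly as $\Delta(d\theta)=HF_X(A(1,d\theta))+F_X(A(1,d\theta))H^*$ via \eqref{e:HFX(1,infty)+FX(1,infty)H*=AA*}, which simultaneously shows $\Delta(\Theta)\in{\mathcal S}_{\ge 0}(n,\bbC)$. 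You instead prove the radial rigidity from scratch through the twisted Cauchy equation $f(s+t)=f(s)+e^{-sH}f(t)e^{-sH^*}$, which is sound: Lebesgue differentiation at some $s_0$ plus invertibility of $e^{-s_0H}$ does yield $K=f'(0^+)$, and the functional equation then gives exact right-differentiability everywhere. What your approach buys is self-containedness and the observation that the change-of-variables formula \eqref{e:change_of_variables_into_polar_coordinates} does not really need the full-rank hypothesis (your stated use of \eqref{e:maxeigLambda(Theta)>0=>minReeigLambda(Theta)>0} to ensure $\Delta(B)\succeq 0$ is not where it matters -- positive semidefiniteness of $K_B$ is automatic as a limit of the positive semidefinite matrices $h^{-1}\mu_B([1,e^h])$); what the paper's route buys is the explicit Lyapunov-type formula for $\Delta$ and reuse of OFBM theory. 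One detail you should nail down in the rigidity lemma: to pass from $f'(s)=e^{-sH}Ke^{-sH^*}$ to $f(s)=\int_0^s e^{-uH}Ke^{-uH^*}du$ you must exclude a singular part, and right-differentiability everywhere alone does not suffice for a merely right-continuous $f$; you need $f$ continuous, which follows because an atom $\mu(\{c_0\})\neq{\mathbf 0}$ would propagate by the scaling relation to uncountably many atoms with mass bounded below on compact radial sets, contradicting local finiteness. With that point added, both directions of your argument go through.
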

\begin{proof}
Consider the decomposition \eqref{e:x=tau(x)E*l(x)} induced by $E^*$, namely, with the latter in place of $E$. Let
\begin{equation}\label{e:A(r,Theta)}
A(r,\Theta) = \{x \in \bbR^m: \tau(x) \geq r, \hspace{1mm}l(x) \in \Theta \}, \quad r > 0, \quad \Theta \in {\mathcal B}(S_0),
\end{equation}
where $\Theta \in {\mathcal B}(S_0)$ is a set such that
\begin{equation}\label{e:Theta_in_B(S0)}
\max \textnormal{eig} \hspace{0.5mm} F_X(A(1,\Theta)) > 0.
\end{equation}
Define
$$
G_{X,\Theta}[a,b) = \left\{\begin{array}{cc}
F_X (A(a, \Theta)) - F_X (A(b, \Theta)), & 0 < a < b;\\
\overline{G_{X,\Theta}[-b,-a)}, & a < b < 0.
\end{array}\right.
$$
By \eqref{e:tau_E} and \eqref{e:x=tau(x)E*l(x)} with $E^*$ in place of $E$, $A(cr,\Theta) = c^{E^*}A(r,\Theta)$. Hence, by \eqref{e:FX(dx)_is_homogeneous}, the measure $G_{X,\Theta}$ satisfies $G_{X,\Theta}(c[a,b)) = F_{X}(A(ca,\Theta)) - F_{X}(A(cb,\Theta))= c^{-H}G_{X,\Theta}[a,b)c^{-H^*}$, $c > 0$. Moreover, by \eqref{e:Theta_in_B(S0)} and \eqref{e:maxeigLambda(Theta)>0=>minReeigLambda(Theta)>0},
\begin{equation}\label{e:G_{X,Theta}(B)_has_full_rank}
G_{X,\Theta}(B) \in {\mathcal S}_{> 0}(n,\bbC), \quad B \in \{ [a,b): -\infty < a < b < \infty, \hspace{1mm} 0 \notin [a,b)\}.
\end{equation}
We can extend the $\sigma$-finite measure $G_{X,\Theta}$ to ${\mathcal B}(\bbR)$ so that
\begin{equation}\label{e:GX,Theta_on_Borel}
G_{X,\Theta}(-ds) = \overline{G_{X,\Theta}(ds)}, \quad G_{X,\Theta}(cds) = c^{-H}G_{X,\Theta}(ds)c^{-H^*}, \hspace{1mm} c > 0.
\end{equation}
Let $\lambda^{\max}_{X,\Theta}(ds) = \sup_{v \in S^{n-1}_{\bbC}}v^*G_{X,\Theta}(ds)v$ be the measure induced by the maximum eigenvalue of $G_{X,\Theta}(ds)$. For $\rho \geq 1$, by \eqref{e:FX(dx)_is_homogeneous},
\begin{equation}\label{e:v*G[1,r)v}
v^*G_{X,\Theta}[1,\rho) v = v^* \{F_X(A(1,\Theta)) - \rho^{-H}F_X(A(1,\Theta))\rho^{-H^*}\}v, \quad v \in S^{n-1}_{\bbC}.
\end{equation}
By taking $\rho \rightarrow \infty$ in \eqref{e:v*G[1,r)v}, we conclude that
\begin{equation}\label{e:lambdamaxX,Theta[r,infty)<infty}
0 \leq \lambda^{\max}_{X,\Theta}[r,\infty) < \infty, \quad r \geq 1.
\end{equation}
On the other hand, fix $0 < r < 1$ and rewrite $H = P J_H P^{-1}$, where $J_H$ is the Jordan form of $H$ and $P \in GL(n,\bbC)$. Then, for $v \in S^{n-1}_{\bbC}$ and any small $\delta > 0$, by using the relation $r^{-H} = P r^{-J_H}P^{-1}$,
$$
v^*G_{X,\Theta}[r,1) v = v^* r^{-H}F_{X}(A(1,\Theta))r^{-H^*} v - v^* F_{X}(A(1,\Theta)) v
$$
\begin{equation}\label{e:v^*GX,Theta[r,1)v=<r(-2h)}
\leq (v^*P) r^{-J_H}\{P^{-1}F_{X}(A(1,\Theta))(P^{*})^{-1}\} r^{-J^*_H} P^*v \leq C r^{-2h_{\max}- \delta},
\end{equation}
where $h_{\max} := \max \Re\hspace{0.5mm} \textnormal{eig}(H)$ and the last inequality follows by the explicit form for $r^{J_H}$ (e.g., Didier and Pipiras \cite{didier:pipiras:2011}, Appendix D). Thus, the bound \eqref{e:v^*GX,Theta[r,1)v=<r(-2h)} implies that
\begin{equation}\label{e:lambdamaxX,Theta[r,1)<integral}
0 \leq \lambda^{\max}_{X,\Theta}[r,1) \leq C' \int^{1}_{r}s^{-(2h_{\max}+\delta+1)}ds.
\end{equation}
Recall that, by conditions \eqref{e:minReeig(H)=<maxReeig(H)<minReeig(E*)} and \eqref{e:minReE*=1}, $h_{\max} \leq 1$. Thus, together with the bound $|e^{its}-1|^2 \leq \min\{4, C s^2 \}$ for an appropriate $C > 0$, the expressions \eqref{e:lambdamaxX,Theta[r,infty)<infty} and \eqref{e:lambdamaxX,Theta[r,1)<integral} imply that
$$
v^*\Big\{\int^{\infty}_{0}|e^{its}-1|^2 \hspace{1mm}G_{X,\Theta}(ds)\Big\}v \leq \|v\|^2 \int^{\infty}_0 |e^{its}-1|^2 \lambda^{\max}_{X,\Theta}(ds) < \infty, \quad t \in \bbR, \quad v \in S^{n-1}_{\bbC}.
$$
In particular, by \eqref{e:GX,Theta_on_Borel}, $\Gamma_{\Theta}(t_1,t_2) := \int_{\bbR}(e^{i t_1 s}-1)(e^{-i t_2 s}-1)G_{X,\Theta}(ds)$ is the covariance function of an OFBM with range (Hurst) exponent $H$, where properness stems from \eqref{e:G_{X,Theta}(B)_has_full_rank}. By the proof of Theorem 3.1 in Didier and Pipiras \cite{didier:pipiras:2011}, there is $A_{\Theta} \in M(n,\bbC)$ such that
\begin{equation}\label{e:G_{X,Theta}(ds)=density}
G_{X,\Theta}(ds) = (s^{-H}_{+}A_{\Theta}A^*_{\Theta}s^{-H^*}_{+} + s^{-H}_{-}\overline{A_{\Theta}A^*_{\Theta}}s^{-H^*}_{-})s^{-1} ds.
\end{equation}
Since $G_{X,\Theta}[1,\infty) = F_X ( A(1,\Theta))$, then by \eqref{e:HFX(1,infty)+FX(1,infty)H*=AA*} we obtain
\begin{equation}\label{e:HFX(A)+FX(A)H*=AA*}
H \hspace{1mm}G_{X,\Theta}[1,\infty) + G_{X,\Theta}[1,\infty)\hspace{1mm} H^* = A_{\Theta} A^*_{\Theta} \in {\mathcal S}_{\geq 0}(n,\bbC).
\end{equation}
Thus, for $r > 0$, it results from \eqref{e:G_{X,Theta}(ds)=density} that
$$
 - \frac{d}{dr}F_X(A(r,\Theta)) = \frac{G_{X,\Theta}(dr)}{dr} = r^{-H} \{H F_X(A(1,\Theta))+ F_X(A(1,\Theta))H^*\}r^{-H^*}r^{-1}
 $$
 $$
 = r^{-H}\int_{\Theta}\Delta(d \theta) r^{-H^*}r^{-1},
$$
where $\Delta(d \theta) := H F_X(A(1,d\theta)) + F_X(A(1,d\theta))H^*$. Note that, by \eqref{e:HFX(A)+FX(A)H*=AA*}, $\Delta(\Theta) \in {\mathcal S}_{\geq 0}(n,\bbC)$ and $v^* \Delta(\Theta) v < \infty$, $\Theta \in {\mathcal B}(S_0)$, $v \in S^{n-1}_{\bbC}$. By integrating from $r$ to $\infty$, we arrive at the relation \eqref{e:change_of_variables_into_polar_coordinates} for the class ${\mathcal A}$ of sets of the form \eqref{e:A(r,Theta)}. Because $F_X(A(r,S_0)) = r^{-H}F_X(A(1,S_0))r^{-H^*}$, $r > 0$, the measure induced by the maximum eigenvalue of $F_X(dx)$ is $\sigma$-finite. Since, in addition, the class ${\mathcal A}$ is a $\pi$-system that generates the Borel sets, an entry-wise (real and imaginary parts) application of Theorem 1.1.3 in Meerschaert and Scheffler \cite{meerschaert:scheffler:2001} implies \eqref{e:change_of_variables_into_polar_coordinates}.

The polar-harmonizable representation \eqref{e:int-rep-spectral-F} is an immediate consequence of \eqref{e:change_of_variables_into_polar_coordinates}. Conversely, let $\Gamma(t_1,t_2)$ be the function defined by the expression \eqref{e:int-rep-spectral-F}. Then, it corresponds to the covariance function of an OFBF as a consequence of the change of variables formula \eqref{e:change_of_variables_into_polar_coordinates} and the condition \eqref{e:Gamma(t,t)_is_pos_def}, the latter ensuring properness. $\Box$
\end{proof}
\begin{example}
The covariance function of an OFBM (i.e., $m=1$), characterized by its associated spectral density \eqref{e:fX(x)_OFBM}, satisfies \eqref{e:int-rep-spectral-F} with $E = 1$ and
\begin{equation}\label{e:OFBM_polar_harmonizable}
\Delta(d \theta) = AA^*\delta_{\{1\}}(d\theta) + \overline{AA^*}\delta_{\{-1\}}(d\theta),
\end{equation}
where $\delta_{\bullet}$ is a Dirac delta measure.
\end{example}

\begin{remark}\label{r:polar}
As anticipated in Remark \ref{r:OFBM_helps_with_OFBF}, in light of \eqref{e:int-rep-spectral-F} and \eqref{e:OFBM_polar_harmonizable} we can interpret the spectral (covariance) structure of OFBF as that of an OFBM for each fixed spherical direction.

Moreover, let $X = \{X(t)\}_{t \in \bbR^m}$ be an OFBF whose spectral measure $F_X(dx)$ satisfies the assumptions of Proposition \ref{p:akin_to_Prop2.3_Bierme_etal2007}. Then, the representation \eqref{e:int-rep-spectral-F} leads to the polar-harmonizable integral representation
\begin{equation}\label{e:harmonizable-polar}
\{X(t)\}_{t \in \bbR^m} \stackrel{{\mathcal L}}= \Big\{ \int^{\infty}_0 \int_{S_0}(e^{i \langle t, r^{E^*}\theta\rangle}-1)r^{- H - I/2}\widetilde{B}_{H,\Delta}(dr,d\theta)\Big\}_{t \in \bbR^m},
\end{equation}
where $\widetilde{B}_{H,\Delta}(dr,d\theta)$ is a Hermitian Gaussian random measure with ${\mathcal S}_{\geq 0}(n,\bbC)$-valued control measure
$$
{\Bbb E}\widetilde{B}_{H,\Delta}(dr,d\theta)\widetilde{B}_{H,\Delta}(dr,d\theta)^* = dr \Delta(d\theta) = \Delta(d\theta) dr.
$$
The representation \eqref{e:harmonizable-polar} is of independent interest. As noted before Example \ref{ex:HF_X+F_XH*=AA*}, in \eqref{e:harmonizable-polar} the domain exponent $E$ influences the spherical component of OFBF, whereas $H$ determines the decay of the spectral measure in each spherical direction. The representation \eqref{e:harmonizable-polar} generalizes that of OFBM (i.e., $m = 1$) and will allow us to apply the arguments developed for OFBM to OFBF, e.g., as in the proof of Proposition \ref{p:Gran_structure} below.
\end{remark}

\begin{remark}\label{r:suff_condition_Delta_is_Hermitian_posdef}
In terms of the spectral measure expressed in polar coordinates, a sufficient condition for \eqref{e:Gamma(t,t)_is_pos_def} is that, for some basis $\{u_k\}_{k=1,\hdots,m} \subseteq S_0$ of $\bbR^m$, and pairwise disjoint vicinities $\Theta_k \ni u_k$, $k = 1,\hdots,m$,
\begin{equation}\label{e:Lambda(Theta0)_is_proper}
\Re \Delta(\Theta_k) \in {\mathcal S}_{>0}(n,\bbR), \quad k = 1,\hdots,m.
\end{equation}
Indeed, let $v \in \bbR^n \backslash\{0\}$. For $t \neq 0$,
$$
v^* \Gamma(t,t) v  \geq \int^{\infty}_{0}v^* r^{H} \Big\{2\int_{\cup^{m}_{k=1}\Theta_k} |e^{i\langle t,r^{E^*}\theta \rangle} - 1 |^2 \hspace{0.5mm}\Re\Delta(d \theta)\Big\}r^{H^*}v \hspace{1mm}r^{-1}dr
$$
$$
= 2\int^{\infty}_{0}\Big\{\int_{\cup^{m}_{k=1}\Theta_k} |e^{i\langle t,r^{E^*}\theta \rangle} - 1 |^2  \frac{v^* r^{H}}{\|v^* r^{H}\|}\hspace{0.5mm}\Re \Delta(d \theta)\frac{r^{H^*}v}{\|r^{H^*}v\|} \Big\}\| r^{H^*}v\|^2 \hspace{1mm}r^{-1}dr
$$
\begin{equation}\label{e:suff_condition_Delta_is_Hermitian_posdef}
\geq 2\int^{\infty}_{0}\Big\{\int_{\cup^{m}_{k=1}\Theta_k} |e^{i\langle t,r^{E^*}\theta \rangle} - 1 |^2  \min \textnormal{eig}(\Re \Delta(d \theta))\Big\}\| r^{H^*}v\|^2 \hspace{1mm}r^{-1}dr >0.
\end{equation}
The first inequality in \eqref{e:suff_condition_Delta_is_Hermitian_posdef} results from the Hermitian property of the measure $\Delta(d \theta)$, i.e., $\Delta(-d \theta) = \overline{\Delta(d \theta)}$. In turn, the last inequality is a consequence of the condition \eqref{e:Lambda(Theta0)_is_proper}, since $\int_{\cup^{m}_{k=1}\Theta_k} |e^{i\langle t,r^{E^*}\theta \rangle} - 1 |^2  \min \textnormal{eig}(\Re \Delta(d \theta)) > 0$ for all $r> 0$.
\end{remark}
Given a matrix $M$, we denote by ${\mathcal C}_{O(n)}(M)$ the centralizer of $M$ restricted to the orthogonal group, namely, the set of orthogonal matrices that commute with $M$. Centralizers appear naturally in the characterization of the range symmetry group of OFBM, as shown in Didier and Pipiras \cite{didier:pipiras:2012}. We shall use centralizers also with OFBF and resort to arguments in the latter reference whenever needed. The next proposition shows that the range symmetry group of an OFBF $X$ can be decomposed into the intersection of the (orthogonal) centralizers of the spectral measure $F_X(dx)$ expressed in polar coordinates.
\begin{proposition}\label{p:Gran_structure}
Let $X = \{X(t)\}_{t \in \bbR^m}$ be an OFBF satisfying the conditions \eqref{e:minReeig(H)=<maxReeig(H)<minReeig(E*)} and \eqref{e:maxeigLambda(Theta)>0=>minReeigLambda(Theta)>0}. Let
\begin{equation}\label{e:class_U}
{\mathcal U} = \{\Theta \in {\mathcal B}(S_0): \Theta \cap \textnormal{int}(\textnormal{supp}\hspace{0.5mm}\Re\Delta(d \theta)) \neq \emptyset\},
\end{equation}
where the measure $\Delta(d \theta)$ appears in \eqref{e:change_of_variables_into_polar_coordinates} and \eqref{e:int-rep-spectral-F}. Then,
\begin{itemize}
\item [$(i)$]
\begin{equation}\label{e:Gran_structure}
G^{\textnormal{ran}}_1(X) = \bigcap_{\Theta \in {\mathcal U}} G_{H,\Theta}
:=\bigcap_{\Theta \in {\mathcal U}}  W_{\Theta } \Big( \bigcap_{r > 0} {\mathcal C}_{O(n)}(\Pi_{r, \Theta}) \cap {\mathcal C}_{O(n)}(\Pi_{I,\Theta})\Big)W^{-1}_{\Theta },
\end{equation}
where, for $\Theta \in {\mathcal U}$,
\begin{equation}\label{e:Pi-rTheta_Pi-I,Theta}
\Pi_{r,\Theta} = r^{- W^{-1}_{\Theta}H W_{\Theta}}r^{- W_{\Theta}H^* W^{-1}_{\Theta} }, \quad r > 0, \quad
\Pi_{I,\Theta} = W^{-1}_{\Theta} \Im (\Delta(\Theta)) W^{-1}_{\Theta},
\end{equation}
and
\begin{equation}\label{e:W_Theta}
W_{\Theta} := \Re(\Delta( \Theta))^{1/2} \in {\mathcal S}_{>0}(n,\bbR), \quad \Theta \in {\mathcal B}(S_0);
\end{equation}
\item [$(ii)$] when $n = 2$,
\begin{equation}\label{e:Gran_structure_n=2}
G_{H,\Theta} = \left\{\begin{array}{cc}
W_{\Theta } \Big( \bigcap_{r > 0} {\mathcal C}_{O(2)}(\Pi_{r, \Theta}) \cap SO(2) \Big)W^{-1}_{\Theta }, & \textnormal{if }\Im \Delta(\Theta) \neq {\mathbf 0} ;\\
W_{\Theta } \Big( \bigcap_{r > 0} {\mathcal C}_{O(2)}(\Pi_{r, \Theta}) \Big)W^{-1}_{\Theta }, & \textnormal{if }\Im \Delta(\Theta) = {\mathbf 0} ,
\end{array}\right.
\end{equation}
and
\begin{equation}\label{e:C(O(2))(Pir,Theta)=D2_or_O(2)}
{\mathcal C}_{O(2)}(\Pi_{r, \Theta}) = {\mathcal D}_2 \textnormal{ or } O(2), \quad \Theta \in {\mathcal U}, \quad r > 0.
\end{equation}
\end{itemize}
\end{proposition}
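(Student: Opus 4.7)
The plan is to reduce the range-symmetry question to a collection of one-dimensional OFBM range-symmetry questions, indexed by the spherical direction $\Theta$, and then apply the known OFBM characterization of Didier and Pipiras \cite{didier:pipiras:2012}. The starting point is Lemma \ref{l:Gran1_via_specmeas}, which gives $G^{\textnormal{ran}}_1(X) = \mathcal{S}^{\textnormal{ran}}(F_X)$, so $B \in G^{\textnormal{ran}}_1(X)$ iff $B F_X(A) B^* = F_X(A)$ for all Borel $A$. Using the polar-harmonizable representation \eqref{e:change_of_variables_into_polar_coordinates}, I would specialize to the generating sets $A(r,\Theta)$ defined in \eqref{e:A(r,Theta)} and observe that the measure $G_{X,\Theta}$ constructed inside the proof of Proposition \ref{p:akin_to_Prop2.3_Bierme_etal2007} is precisely an OFBM spectral measure on $\bbR$ with Hurst matrix $H$ and spherical parameter $A_{\Theta}A_{\Theta}^* = \Delta(\Theta)$ (cf.\ \eqref{e:G_{X,Theta}(ds)=density}--\eqref{e:HFX(A)+FX(A)H*=AA*}). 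Thus $B$ is a range symmetry of $X$ iff $BG_{X,\Theta}(\cdot)B^* = G_{X,\Theta}(\cdot)$ for every $\Theta\in{\mathcal B}(S_0)$, which is equivalent to $B$ being a range symmetry of the directional OFBM with spherical parameter $\Delta(\Theta)$.

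For part $(i)$, I restrict attention to $\Theta \in \mathcal{U}$, since by definition \eqref{e:class_U} the contribution of $\Theta \notin \mathcal{U}$ is null in the real-part support, and condition \eqref{e:maxeigLambda(Theta)>0=>minReeigLambda(Theta)>0} guarantees $\Re\Delta(\Theta) \in \mathcal{S}_{>0}(n,\bbR)$ on $\mathcal{U}$. This makes the square root $W_\Theta$ in \eqref{e:W_Theta} well defined and invertible, and justifies writing $B = W_\Theta O W_\Theta^{-1}$ with $O \in O(n)$ (the orthogonal restriction comes from the fact that $B$ must preserve $\Re\Delta(\Theta)$, i.e.\ $BW_\Theta^2 B^* = W_\Theta^2$). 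Invoking Didier and Pipiras \cite{didier:pipiras:2012}, the conditions $B G_{X,\Theta}(B_0) B^* = G_{X,\Theta}(B_0)$ for $B_0 \in \mathcal{B}(\bbR)$ translate, after conjugation by $W_\Theta$, into $O$ commuting with $\Pi_{r,\Theta}$ for every $r>0$ (the scaling constraint from $G_{X,\Theta}(c\,\cdot\,) = c^{-H} G_{X,\Theta}(\cdot) c^{-H^*}$) and with $\Pi_{I,\Theta}$ (the imaginary-part constraint on $\Delta(\Theta)$). Intersecting over $\Theta\in\mathcal{U}$ delivers \eqref{e:Gran_structure}.

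For part $(ii)$ with $n=2$, I would exploit that $H$ real and $W_\Theta$ real symmetric force $\Pi_{r,\Theta}$ to be real symmetric, while $\Pi_{I,\Theta}$ is real antisymmetric. A real antisymmetric $2\times 2$ matrix is a multiple of $O_{\pi/2}$, whose $O(2)$-centralizer is $SO(2)$ when the multiple is nonzero and $O(2)$ otherwise; intersecting with $\bigcap_{r>0}\mathcal{C}_{O(2)}(\Pi_{r,\Theta})$ yields the dichotomy \eqref{e:Gran_structure_n=2}. For $\Pi_{r,\Theta}$, a real symmetric $2\times 2$ matrix has $O(2)$-centralizer equal to $O(2)$ if it is a scalar multiple of $I$ and to a conjugate of $\mathcal{D}_2$ otherwise; the fact that the relevant conjugate is $\mathcal{D}_2$ itself relies on choosing $W_\Theta$ in canonical form so that $W_\Theta^{-1} H W_\Theta$ (and hence $\Pi_{r,\Theta}$ for all $r$) is simultaneously diagonalizable along the standard axes, as in the normal forms used in Didier and Pipiras \cite{didier:pipiras:2012}.

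The main obstacle I anticipate is the rigorous per-direction reduction: the polar-harmonizable form suggests a ``direct integral'' of OFBMs indexed by $\Theta$, but $F_X$ is not literally such a direct integral on arbitrary Borel sets. I would handle this by showing that the constraint $BF_X(A)B^* = F_X(A)$ on the $\pi$-system $\{A(r,\Theta)\}$ is equivalent to the full constraint (via the same Dynkin/$\pi$-system argument invoked at the end of the proof of Proposition \ref{p:akin_to_Prop2.3_Bierko_etal2007}), and then differentiating in $r$ to recover $\Delta(\Theta)$-level conditions. Secondarily, in part $(ii)$, care is needed to verify that the canonical choice of $W_\Theta$ really places $\Pi_{r,\Theta}$ in axis-aligned form uniformly in $r$, which rests on the $r$-independence of the eigenbasis of $r^{-A}r^{-A^T}$.
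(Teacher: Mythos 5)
Your proposal is correct and follows essentially the same route as the paper: both reduce, via Lemma \ref{l:Gran1_via_specmeas} and the polar representation \eqref{e:change_of_variables_into_polar_coordinates}, to a per-direction OFBM range-symmetry problem with spherical parameter $\Delta(\Theta)$, apply the argument of Didier and Pipiras (2012), Theorem 3.1, with $W_\Theta$ in place of $W$, and for $n=2$ use the skew-symmetry of $\Pi_{I,\Theta}$ and the centralizer analysis of symmetric matrices exactly as cited on pp.\ 376--377 of that reference. The only caveat is your final remark: the eigenbasis of $\Pi_{r,\Theta}$ need not be independent of $r$ (which is precisely why the intersection over $r>0$ appears in \eqref{e:Gran_structure}), and the identification in \eqref{e:C(O(2))(Pir,Theta)=D2_or_O(2)} is per fixed $r$ and up to orthogonal conjugacy, just as in the paper.
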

\begin{proof} By the change of measure $F_X(dx)$ into polar coordinates (Proposition \ref{p:akin_to_Prop2.3_Bierme_etal2007}) and Lemma \ref{l:Gran1_via_specmeas},
$$
G^{\textnormal{ran}}_1(X) = \{C \in GL(n,\bbR): C F_X(dx) C^*= F_X(dx)\}
$$
$$
= \{C \in GL(n,\bbR): C r^{-H}\Delta(d \theta) r^{-H^*} C^*= r^{-H}\Delta(d \theta) r^{-H^*} , \hspace{1mm}r > 0\}
$$
$$
= \{C \in GL(n,\bbR): C r^{-H} \Re\Delta(d \theta) r^{-H^*} C^*= r^{-H}\Re\Delta(d \theta) r^{-H^*} , \hspace{1mm}r > 0\}
$$
$$
\cap \{C \in GL(n,\bbR): C r^{-H} \Im\Delta(d \theta) r^{-H^*} C^*= r^{-H}\Im\Delta(d \theta) r^{-H^*} , \hspace{1mm}r > 0\}
$$
$$
= \bigcap_{\Theta \in {\mathcal U}}\{C \in GL(n,\bbR): C r^{-H} \Re\Delta(\Theta) r^{-H^*} C^*= r^{-H}\Re\Delta(\Theta) r^{-H^*} , \hspace{1mm}r > 0\}
$$
$$
\cap \{C \in GL(n,\bbR): C r^{-H} \Im\Delta(\Theta) r^{-H^*} C^*= r^{-H}\Im\Delta(\Theta) r^{-H^*} , \hspace{1mm}r > 0\}
$$
$$
=: \bigcap_{\Theta \in {\mathcal U}} G^{\textnormal{ran}}_{1,\Theta} \cap G^{\textnormal{ran}}_{2, \Theta}.
$$
So, consider $W_{\Theta}$ as in \eqref{e:W_Theta}. To establish claim $(i)$, we can apply the same argument as in the proof of Theorem 3.1 in Didier and Pipiras \cite{didier:pipiras:2012}, pp.\ 362--364, with $W_{\Theta}$ in place of $W$ and $\Delta(\Theta) \in {\mathcal S}_{\geq 0}(n,\bbC) \cap GL(n,\bbC)$, $\Theta \in {\mathcal U}$ (under condition \eqref{e:maxeigLambda(Theta)>0=>minReeigLambda(Theta)>0}), in place of the OFBM spectral matrix $AA^*$. In particular, by the argument in Didier and Pipiras \cite{didier:pipiras:2012}, one can express $G^{\textnormal{ran}}_{2, \Theta} = \{C \in GL(n,\bbR): C \Im\Delta(\Theta) C^*= \Im\Delta(\Theta)\}$; i.e., the centralizer ${\mathcal C}_{O(n)}(\Pi_{I,\Theta})$ does not depend on $r$. Claim ($ii$) is a consequence of the following two facts. First, if $\Pi_{I,\Theta} \neq {\mathbf 0}$, then ${\mathcal C}_{O(2)}(\Pi_{I,\Theta}) = SO(2)$, since $\Pi_{I,\Theta}$ is skew-symmetric, namely, $\Pi_{I,\Theta}^* = - \Pi_{I,\Theta}$ (see Lemma 5.1 in Didier and Pipiras \cite{didier:pipiras:2012}, p.\ 376, where ${\mathcal C}_{O(2)}(\Pi_{I,\Theta})$ is denoted by $G(\Pi_{I,\Theta})$). Second, \eqref{e:C(O(2))(Pir,Theta)=D2_or_O(2)} follows from the analysis in Didier and Pipiras \cite{didier:pipiras:2012}, p.\ 377. $\Box$\\
\end{proof}

We are now in a position to describe ${\Bbb G}^{\textnormal{ran}}$ in dimension 2.
\begin{corollary}\label{c:range_groups_dim2}
Consider the class of OFBFs taking values in $\bbR^2$ and satisfying the conditions \eqref{e:minReeig(H)=<maxReeig(H)<minReeig(E*)} and \eqref{e:maxeigLambda(Theta)>0=>minReeigLambda(Theta)>0}. Then, up to a positive definite conjugation (see \eqref{e:G=WOW^(-1)}), the elements of ${\Bbb G}^{\textnormal{ran}}$ are
\begin{equation}\label{C2,D2,SO(2),O(2)}
\textnormal{${\mathcal C}_2$, ${\mathcal D}_2$, $SO(2)$ or $O(2)$}.
\end{equation}
\end{corollary}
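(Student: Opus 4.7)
The strategy is to deduce the list \eqref{C2,D2,SO(2),O(2)} from Proposition \ref{p:Gran_structure}(ii) together with a short case analysis of how compact subgroups of $GL(2,\bbR)$ behave under intersection. By part (ii) of Proposition \ref{p:Gran_structure}, every factor in the representation $G := G^{\textnormal{ran}}_{1}(X) = \bigcap_{\Theta \in {\mathcal U}} G_{H,\Theta}$ is conjugate (via some $W_\Theta \in {\mathcal S}_{>0}(2,\bbR)$) to one of ${\mathcal C}_2$, ${\mathcal D}_2$, $SO(2)$, $O(2)$; each such factor contains the central element $-I$, and so $G \ni -I$.

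I would case-split on the most restrictive factor. If some $G_{H,\Theta_0}$ equals the central group ${\mathcal C}_2$, then $G \subseteq {\mathcal C}_2$ combined with $-I \in G$ forces $G = {\mathcal C}_2$. If some $G_{H,\Theta_0}$ is conjugate to ${\mathcal D}_2$ while none equals ${\mathcal C}_2$, then $G$ is a subgroup of this order-$4$ group containing $\{I,-I\}$, and the only such subgroups are ${\mathcal C}_2$ itself and the full conjugate of ${\mathcal D}_2$. In the remaining case every $G_{H,\Theta}$ is conjugate to $SO(2)$ or $O(2)$, so each admits an identity component $H_\Theta := W_\Theta SO(2) W_\Theta^{-1}$ with $|G_{H,\Theta}/H_\Theta| \leq 2$.

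That last case splits further on whether the family $\{H_\Theta\}$ is constant. When all $H_\Theta$ equal a common $H$, the inclusion $H \subseteq G_{H,\Theta}$ for every $\Theta$ yields $H \subseteq G$; connectedness of $H$ gives $H \subseteq G^{\circ}$, while $G^{\circ} \subseteq G_{H,\Theta}^{\circ} = H$ gives the reverse inclusion, so $G^{\circ} = H$ is a conjugate of $SO(2)$ and $G/G^{\circ}$ has order at most $2$. It follows that $G$ is conjugate to $SO(2)$ or $O(2)$. When some pair satisfies $H_\Theta \neq H_{\Theta'}$, the key auxiliary fact I need is that two \emph{distinct} conjugates of $SO(2)$ in $GL(2,\bbR)$ meet exactly in $\{I,-I\}$: simultaneous diagonalization in $GL(2,\bbR)$ of the positive definite forms $W_\Theta^{-2}$ and $W_{\Theta'}^{-2}$ reduces this to the standard fact that the only elements of $SO(2)$ commuting with a non-scalar diagonal matrix are $\pm I$. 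A parallel computation shows that two distinct conjugates of $O(2)$ intersect in a conjugate of ${\mathcal D}_2$. Applied to the pair $G_{H,\Theta}, G_{H,\Theta'}$, these identities force $G$ into either $\{I,-I\}$ or a conjugate of ${\mathcal D}_2$; combined with $-I \in G$ this yields $G$ conjugate to ${\mathcal C}_2$ or ${\mathcal D}_2$.

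The main obstacle is the last subcase, where the identity components $H_\Theta$ vary: it rests on the simultaneous-diagonalization lemma just indicated. The other cases are routine consequences of the structural description in Proposition \ref{p:Gran_structure}(ii).
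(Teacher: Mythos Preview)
Your containment argument is essentially the paper's: express $G^{\textnormal{ran}}_1(X) = \bigcap_{\Theta \in \mathcal{U}} G_{H,\Theta}$ via Proposition \ref{p:Gran_structure}, note that each factor is conjugate to one of $\mathcal{C}_2, \mathcal{D}_2, SO(2), O(2)$, and then check that pairwise intersections of such groups stay in that list. The paper packages this last step into Lemma \ref{l:W1=wW2} and Table \ref{table:intersection_groups_dim=2}; your simultaneous-diagonalization computation plays exactly the same role. (Your observation that two distinct positive-definite conjugates of $O(2)$ can meet in a conjugate of $\mathcal{D}_2$---for instance $O(2) \cap \textnormal{diag}(1,2)\,O(2)\,\textnormal{diag}(1,\tfrac{1}{2})$ contains $\textnormal{diag}(\pm 1,\mp 1)$---is correct and in any case does not affect the final list.)

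There is, however, a genuine gap: you establish only the inclusion $\mathbb{G}^{\textnormal{ran}} \subseteq \{\mathcal{C}_2, \mathcal{D}_2, SO(2), O(2)\}$ up to conjugacy, whereas the corollary asserts equality. You still need to exhibit, for each of the four groups, an OFBF in the stated class whose range symmetry group is conjugate to it. The paper handles this by specializing to $m=1$ and citing Theorem 5.1 of Didier and Pipiras \cite{didier:pipiras:2012}, which constructs explicit OFBMs realizing each of the four groups. Without that existence step your argument shows only that no group outside the list can occur, not that every group on the list does.
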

\begin{proof}
First, note that
\begin{equation}\label{e:O(2)andSO(2),D2andSO(2)}
O(2) \cap SO(2) = SO(2), \quad {\mathcal D}_2 \cap SO(2) = {\mathcal C}_2.
\end{equation}
So, let $G_{H,\Theta}$ be as in \eqref{e:Gran_structure}. By \eqref{e:Gran_structure_n=2}, \eqref{e:C(O(2))(Pir,Theta)=D2_or_O(2)} and \eqref{e:O(2)andSO(2),D2andSO(2)}, $G_{H,\Theta}$ has one of the forms in \eqref{C2,D2,SO(2),O(2)} up to a positive definite conjugation $W_{\Theta}$. Moreover, fix any pair $\Theta, \Theta'  \in {\mathcal B}(S_0)$ and consider their associated groups $G_{H,\Theta}$, $G_{H,\Theta' }$. By looking at each subcase and using Lemma \ref{l:W1=wW2}, the intersection group $G_{H,\Theta} \cap G_{H,\Theta' }$ also has one of the forms \eqref{C2,D2,SO(2),O(2)}, up to a conjugacy, as summed up in Table \ref{table:intersection_groups_dim=2}. In fact, to obtain the first entry of Table \ref{table:intersection_groups_dim=2}, suppose $G_{H,\Theta} = W_{\Theta} O(2) W^{-1}_{\Theta} $ and $G_{H,\Theta'} = W_{\Theta'}  O(2) W^{-1}_{\Theta'} $. Then, ${\mathcal C}_2 \subseteq G_{H,\Theta'} \cap G_{H,\Theta'}$. If, in addition, the latter set inclusion is strict, then Lemma \ref{table:intersection_groups_dim=2} implies that $W_{\Theta} = w W_{\Theta'}$ for some $w > 0$, and thus $G_{H,\Theta'} \cap G_{H,\Theta'}= W_{\Theta} O(2) W^{-1}_{\Theta}$, as stated in Table \ref{table:intersection_groups_dim=2}. For another case, if $G_{H,\Theta} = W_{\Theta} SO(2) W^{-1}_{\Theta} $ and $G_{H,\Theta'} = W_{\Theta'}  {\mathcal D}_2 W^{-1}_{\Theta'} $, then ${\mathcal C}_2 \subseteq G_{H,\Theta'} \cap G_{H,\Theta'}$ but $W_{\Theta'}\{\pm \textnormal{diag}(1,-1)\} W^{-1}_{\Theta'} \nsubseteq G_{H,\Theta'} \cap G_{H,\Theta'}$, since the matrices $\pm \textnormal{diag}(1,-1)$ have negative eigenvalues. Therefore, $G_{H,\Theta'} \cap G_{H,\Theta'} = {\mathcal C}_2$, as described on the sixth entry of Table \ref{table:intersection_groups_dim=2}. The remaining entries of the table can be obtained in a similar fashion.

Since for any subgroup ${\mathcal G}$ given by \eqref{C2,D2,SO(2),O(2)} there is an OFBM $X$ ($m = 1$) such that $G^{\ran}_1(X) \cong {\mathcal G}$ (see Didier and Pipiras \cite{didier:pipiras:2012}, Theorem 5.1), then up to conjugacies the family ${\Bbb G}^{\textnormal{ran}}$ is given by \eqref{C2,D2,SO(2),O(2)}, as claimed. $\Box$
\end{proof}

\begin{remark}
For the sake of illustration, in Examples \ref{ex:Gdom1=O(2)_Gran1=O(2)} and \ref{ex:Gdom1=D3_Gran1=SO(2)} below we construct two OFBFs with given domain and range symmetry groups.
\end{remark}

\vspace{-0.5cm}
\begin{center}
\begin{table}[h]
\centering
\begin{tabular}{ccc}\hline
 $G_{H,\Theta} \cong \hdots$ & $G_{H,\Theta'} \cong \hdots$ & $G_{H,\Theta} \cap G_{H,\Theta'} \cong \hdots$ \\\hline
 $O(2)$ & $O(2)$ & $O(2)$ \textnormal{ or } ${\mathcal C}_2$ \\
 $O(2)$ & $SO(2)$ & $SO(2)$ \textnormal{ or } ${\mathcal C}_2$ \\
 $O(2)$ & ${\mathcal D}_2$ & ${\mathcal D}_2$ \textnormal{ or }${\mathcal C}_2$\\
 $O(2)$ & ${\mathcal C}_2$ & ${\mathcal C}_2$\\
 $SO(2)$ & $SO(2)$ & $SO(2)$\textnormal{ or } ${\mathcal C}_2$ \\
 $SO(2)$ & ${\mathcal D}_2$ &  ${\mathcal C}_2$  \\
 $SO(2)$ & ${\mathcal C}_2$ & ${\mathcal C}_2$\\
 ${\mathcal D}_2$ & ${\mathcal D}_2$ &  ${\mathcal D}_2$ \textnormal{ or } ${\mathcal C}_2$   \\
 ${\mathcal D}_2$ & ${\mathcal C}_2$ & ${\mathcal C}_2$\\
 ${\mathcal C}_2$ & ${\mathcal C}_2$ & ${\mathcal C}_2$\\\hline
\end{tabular}\caption{The intersection of range groups for different spherical sets $\Theta$, $\Theta'$ ($n = 2$), where
$\cong $ denotes conjugacy (see \eqref{e:G=O}).}\label{table:intersection_groups_dim=2}
\end{table}
\end{center}
\vspace{-1cm}

\subsection{On the characterization of ${\Bbb G}^{\textnormal{dom}}$}\label{s:domain_symm_groups_are_maximal}

This section is dedicated to domain symmetry groups, where $G^{\textnormal{dom}}_1(X)$ for an OFBF $X$ is defined in \eqref{e:def_groups}. However, the arguments are quite different from those in Section \ref{s:Gran1}, on range symmetry groups, and build upon the framework developed in Meerschaert and Veeh \cite{meerschaert:veeh:1995}. In particular, the operator self-similarity of OFBF will not play a role, and will only reappear in the subsequent Section \ref{s:on_G}. The two main results of this section are the following. First, we find a necessary condition for a group to be the domain symmetry group of some OFBF $X$. Second, and within the same mathematical framework, we show how a scalar-valued measure can be built that has a given domain symmetry group. Such measure will be used in Section \ref{s:on_G} to help build OFBFs with given domain and range symmetry groups, and thus different types of anisotropy. An explicit description of ${\Bbb G}^{\textnormal{dom}}$ in dimension $m = 2$ is postponed to Corollary \ref{c:Gdom1,Gran1_dim_m=2,n=2} in Section \ref{s:on_G}.

The next lemma relates the domain symmetries of $X$ to those of the spectral measure $F_X(dx)$ in \eqref{e:F_X(dx)=EB(dx)B(dx)}, or of the spectral density $f_X(x)= F_X(dx)/dx$ when it exists. As with range symmetries, it is more convenient to work with the spectral measure in the study of domain symmetries.
\begin{lemma}\label{l:G1_via_specmeas}
Let $X$ be an OFBF with harmonizable representation \eqref{e:OFBF_harmonizable_representation} and spectral measure $F_X(dx)$ as in \eqref{e:F_X(dx)=EB(dx)B(dx)}. Then,
\begin{equation}\label{e:Gdom1=S(FX)*}
G^{\textnormal{dom}}_1(X) = {\mathcal S}^{\textnormal{dom}}(F_X)^*,
\end{equation}
where ${\mathcal S}^{\textnormal{dom}}(F_X)$ is the domain symmetry set defined by \eqref{e:domain_symmetry_group_mu(dx)}. In particular, ${\mathcal S}^{\textnormal{dom}}(F_X)$ is a compact group. If, in addition, $F_X(dx)$ is absolutely continuous, then $A \in G^{\textnormal{dom}}_1(X)$ if and only if
\begin{equation}\label{e:Gdom1_in_terms_of_fX}
f_X(x) = |\det (A^*)^{-1}| f_X((A^*)^{-1}x) \quad dx\textnormal{--a.e.}
\end{equation}
\end{lemma}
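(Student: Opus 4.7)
The plan is to mirror the proof of Lemma \ref{l:Gran1_via_specmeas}, translating the domain action $t \mapsto At$ into a change of spectral variable $x \mapsto (A^*)^{-1} x$ on the spectral side. Since $X$ is Gaussian with mean zero and $X(0) = 0$ a.s., the statement $A \in G^{\textnormal{dom}}_1(X)$ is equivalent to
$$
{\Bbb E}[X(At) X(As)^*] = {\Bbb E}[X(t)X(s)^*], \quad s, t \in \bbR^m.
$$
So I would start from this second-order reformulation and use the harmonizable representation \eqref{e:OFBF_harmonizable_representation} to express both sides as spectral integrals.

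Next, using $\langle At, x \rangle = \langle t, A^* x \rangle$ and a pushforward change of variable $y = A^* x$, the left-hand side rewrites as
$$
{\Bbb E}[X(At) X(As)^*] = \int_{\bbR^m}(e^{i\langle t, y\rangle} - 1)(e^{-i\langle s, y\rangle} - 1)\, F_X((A^*)^{-1}dy),
$$
so that the equality of covariances for all $s,t \in \bbR^m$ becomes an equality of two harmonizable representations with spectral measures $F_X((A^*)^{-1}\cdot)$ and $F_X(\cdot)$. I would then invoke the uniqueness of the spectral measure in the harmonizable representation of an OFBF under the integrability condition \eqref{e:int_x^2/(1+x^2)Fx(dx)} (as provided by Baek et al.\ \cite{baek:didier:pipiras:2014}, Theorem 3.1) to conclude that $F_X((A^*)^{-1}B) = F_X(B)$ for all Borel $B$. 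By the definition \eqref{e:muA(B)=Mu(A^(-1)B)}, this is precisely the condition $A^* \in {\mathcal S}^{\textnormal{dom}}(F_X)$, which establishes \eqref{e:Gdom1=S(FX)*}. The compactness of ${\mathcal S}^{\textnormal{dom}}(F_X)$ is then inherited from that of $G^{\textnormal{dom}}_1(X)$ (Didier et al.\ \cite{didier:meerschaert:pipiras:2016:exponents}, Proposition 2.1) via the continuous group isomorphism $A \mapsto A^*$ of $GL(m,\bbR)$.

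For the absolutely continuous case, I would expand
$$
F_X((A^*)^{-1}B) = \int_{(A^*)^{-1}B} f_X(x)\, dx
$$
and perform the standard change of variable $y = A^* x$, which introduces the Jacobian factor $|\det (A^*)^{-1}|$; identifying the resulting integrand with $f_X(y)$ for all Borel $B$ yields \eqref{e:Gdom1_in_terms_of_fX}.

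The main obstacle is the uniqueness step: one must argue that the equality of the bilinear expressions in the kernel $(e^{i\langle t, y\rangle}-1)(e^{-i\langle s,y\rangle}-1)$ for all $s,t$, against two candidate control measures, forces those measures to coincide. However, this is a standard consequence of the injectivity of the harmonizable representation under the integrability condition \eqref{e:int_x^2/(1+x^2)Fx(dx)}, and it is the exact analogue of the spectral uniqueness used in the range-symmetry argument of Lemma \ref{l:Gran1_via_specmeas}, so it can be quoted without further elaboration.
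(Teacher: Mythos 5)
Your proposal is correct and follows essentially the same route as the paper's proof: reduce membership in $G^{\textnormal{dom}}_1(X)$ to equality of covariances via Gaussianity, apply the change of variables $x = A^*y$ in the harmonizable representation to identify the transformed spectral measure $(F_X)_{A^*}$, conclude $(F_X)_{A^*} = F_X$, and inherit compactness from $G^{\textnormal{dom}}_1(X)$. The only difference is that you make explicit the uniqueness of the spectral measure (quoting Baek et al.\ \citeyear{baek:didier:pipiras:2014}), a step the paper's ``It follows that'' leaves implicit, which is if anything a point in your favor.
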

\begin{proof}
A matrix $A$ in $GL(m,\bbR)$ satisfies $A \in G^{\textnormal{dom}}_1(X)$ if and only if ${\Bbb E}X(As)X(At)^* = {\Bbb E} X(s)X(t)^*$ for all $s,t\in{\mathbb R}^m$.  From the representation \eqref{e:OFBF_harmonizable_representation},
\begin{equation}
{\Bbb E}X(As)X(At)^* =\int_{\bbR^m} (e^{i \langle s,x \rangle}-1)(e^{-i \langle t,x \rangle}-1)(F_X)_{A^*}(dx),\\
\end{equation}
by a change of variables $x = A^*y$. It follows that $A \in G^{\textnormal{dom}}_1(X)$ if and only if
$(F_X)_{A^*}( dx) = F_X(dx)$, as we wanted to show. Proposition 2.1 in Didier et al.\ \cite{didier:meerschaert:pipiras:2016:exponents} then implies that ${\mathcal S}^{\textnormal{dom}}(F_X)$ is a compact group, and \eqref{e:Gdom1_in_terms_of_fX} follows promptly. $\Box$\\
\end{proof}

To characterize domain symmetry groups, recall the group equivalence relation laid out in Meerschaert and Veeh \cite{meerschaert:veeh:1995}. For two subgroups ${\mathcal G}, {\mathcal K} \subseteq GL(m,\bbR)$, we write that
\begin{equation}\label{e:equiv_class_based_on_orbits}
{\mathcal G} \sim {\mathcal K} \Leftrightarrow \{Gx: G \in {\mathcal G}\} = \{Kx: K \in {\mathcal K}\} \textnormal{ for all } x \in \bbR^m.
\end{equation}
Let $[{\mathcal G}]$ be the equivalence class of the group ${\mathcal G}$. We partial order the subsets of $GL(m,\bbR)$ by set inclusion and call a group \textit{maximal }when it contains all other groups in its equivalence class. For example, $O(2)$ is maximal, whereas $SO(2)$ is not, since $[O(2)] = [SO(2)]$. The next proposition shows that maximality is a necessary condition for a group to be the domain symmetry group of an OFBF.

\begin{proposition}\label{p:Gdom1=>maximality}
Let $X$ be an OFBF with harmonizable representation \eqref{e:OFBF_harmonizable_representation}. Then, the domain symmetry group ${\mathcal G} = G^{\textnormal{dom}}_1(X)$ is maximal with respect to its equivalence class $[{\mathcal G}]$.
\end{proposition}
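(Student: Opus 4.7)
My plan is to identify $\mathcal{G} := G^{\textnormal{dom}}_1(X)$ as the symmetry group of an auxiliary $\sigma$-finite measure on $\bbR^m$ and then invoke the Meerschaert--Veeh maximality theorem (\cite{meerschaert:veeh:1995}), which is the natural source of maximality in this framework.

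The first step uses Lemma \ref{l:G1_via_specmeas}, which gives $\mathcal{G} = \mathcal{S}^{\textnormal{dom}}(F_X)^*$ and tells us that $\mathcal{S}^{\textnormal{dom}}(F_X)$ is compact. By \eqref{e:G=WOW^(-1)} I may write $\mathcal{S}^{\textnormal{dom}}(F_X) = W\mathcal{O}W^{-1}$ for some $W \in \mathcal{S}_{>0}(m,\bbR)$ and $\mathcal{O} \subseteq O(m)$, so that $\mathcal{G} = (W\mathcal{O}W^{-1})^* = W^{-1}\mathcal{O}W$ (using $W^* = W$ and $\mathcal{O}^* = \mathcal{O}$, the latter because $\mathcal{O}$ is a subgroup of $O(m)$ and hence closed under inversion). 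The second step is to introduce the push-forward measure
$$
\widetilde{F}_X(B) := F_X(W^{2}B), \quad B \in {\mathcal B}(\bbR^m),
$$
which is $\sigma$-finite because $F_X$ is, by \eqref{e:int_x^2/(1+x^2)Fx(dx)}. Then $A \in \mathcal{S}^{\textnormal{dom}}(\widetilde{F}_X)$ iff $F_X(W^{2}A^{-1}B) = F_X(W^{2}B)$ for all Borel $B$; substituting $B \mapsto W^{-2}B'$ and recognizing that $F_X(CB') = F_X(B')$ for all $B'$ iff $C \in \mathcal{S}^{\textnormal{dom}}(F_X)$, I get $W^{2}A^{-1}W^{-2} \in \mathcal{S}^{\textnormal{dom}}(F_X)$, i.e.,
$$
\mathcal{S}^{\textnormal{dom}}(\widetilde{F}_X) \;=\; W^{-2}\,\mathcal{S}^{\textnormal{dom}}(F_X)\,W^{2} \;=\; W^{-1}\mathcal{O}W \;=\; \mathcal{G}.
$$

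The third step is to apply Meerschaert--Veeh maximality to $\widetilde{F}_X$: since $\mathcal{G}$ is precisely the symmetry group of this $\sigma$-finite measure, it is maximal in its equivalence class $[\mathcal{G}]$, as desired. The main obstacle is that \cite{meerschaert:veeh:1995} states the theorem for scalar (probability) measures, while $\widetilde{F}_X$ is matrix-valued and $\sigma$-finite. This extension is essentially routine: equality of Hermitian matrix measures on every Borel set reduces via polarization to equality of every scalar bilinear form $v^*\widetilde{F}_X v$, $v \in \bbR^n$, so that $\mathcal{S}^{\textnormal{dom}}(\widetilde{F}_X) = \bigcap_{v} \mathcal{S}^{\textnormal{dom}}(v^*\widetilde{F}_X v)$, to which the scalar M--V argument applies. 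Equivalently, one revisits the orbit-based proof of M--V directly, which relies only on the orbit structure in $\bbR^m$ and $\sigma$-finiteness of the underlying measure and therefore transfers verbatim to the matrix-valued setting.
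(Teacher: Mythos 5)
Your first two steps are sound, and your $W^2$-pushforward identity ${\mathcal S}^{\textnormal{dom}}(\widetilde{F}_X)=W^{-1}{\mathcal O}W={\mathcal G}$ is a clean variant of the paper's own reduction (the paper instead sets $G_X(dx)=F_X(Wdx)$ to make the symmetry group orthogonal). The gap is in the third step, which is where the entire content of the proposition lives. Your fallback claim that the Meerschaert--Veeh maximality proof ``relies only on the orbit structure and $\sigma$-finiteness and therefore transfers verbatim'' is false: for $m\geq 2$, Lebesgue measure on $\bbR^m$ is $\sigma$-finite, its domain symmetry group is $\{A:|\det A|=1\}$, and this group is \emph{not} maximal, since $GL(m,\bbR)$ has exactly the same orbits ($\{0\}$ and $\bbR^m\backslash\{0\}$) and strictly contains it. Finiteness is essential to the M--V mechanism, which is a characteristic-function argument, and the characteristic function of an infinite measure is undefined. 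The missing idea --- the actual engine of the paper's proof --- is to use \eqref{e:int_x^2/(1+x^2)Fx(dx)} to form the \emph{finite} weighted measure $\frac{\|x\|^2}{1+\|x\|^2}F_X(dx)$ and its matrix transform $\phi(z)$; and since this weight is invariant exactly under $O(m)$, the normalization $W=I$ must be performed first. In your frame ${\mathcal G}=W^{-1}{\mathcal O}W$ is not orthogonal, so the weight is not ${\mathcal G}$-invariant and the step forcing a challenger ${\mathcal K}$ into the orthogonal group (via ${\mathcal K}S^{m-1}\subseteq S^{m-1}$) also fails; this is repairable (use the weight $\|Wx\|^2/(1+\|Wx\|^2)$ and ellipsoids, or pass to the orthogonal frame and verify that maximality is conjugation-invariant), but none of these repairs are in your text, and supplying them essentially reconstructs the paper's proof rather than citing M--V.

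The polarization route has a second, independent flaw. Even granting a scalar M--V theorem for each slice $\mu_v:=v^*\widetilde{F}_Xv$, you would obtain maximality of each ${\mathcal S}^{\textnormal{dom}}(\mu_v)$ \emph{within its own equivalence class} $[{\mathcal S}^{\textnormal{dom}}(\mu_v)]$. These factor groups can be strictly larger than ${\mathcal G}$, and a challenger ${\mathcal K}\supseteq{\mathcal G}$ with ${\mathcal K}\sim{\mathcal G}$ need not be orbit-equivalent to any of them, so factor-wise maximality does not transfer to the intersection; the inference ``each factor maximal $\Rightarrow$ intersection maximal'' is a nontrivial group-theoretic assertion that you assert without proof (it can be established, e.g.\ by showing $\overline{\langle{\mathcal S}^{\textnormal{dom}}(\mu_v),K\rangle}$ has the same orbits as ${\mathcal S}^{\textnormal{dom}}(\mu_v)$, but that is a lemma, not a triviality). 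A smaller error: with $v\in\bbR^n$ you only see $\Re\widetilde{F}_X$, since $v^T(\Im M)v=0$ for the skew-symmetric imaginary part of a Hermitian $M$; you need $v\in\bbC^n$, or the paper's entrywise decomposition into real/imaginary and positive/negative parts normalized to probability measures. What makes everything combine --- across matrix entries and without any intersection-of-maximals lemma --- is the \emph{pointwise} nature of the transform characterization: for $K$ in a challenger one only has $Kz=G_zz$ with $G_z\in{\mathcal G}$ depending on $z$, and $\phi(K^*z)=\phi(G_z^*z)=\phi(z)$ is checked point by point, simultaneously in all entries, forcing $K\in{\mathcal S}^{\textnormal{dom}}(F_X)={\mathcal G}$. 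It is this argument, not the statement of the scalar theorem, that your proposal black-boxes.
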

\begin{proof}
Since the group $G^{\textnormal{dom}}_1(X)$ is compact, then by Lemma \ref{l:G1_via_specmeas} we can write $G^{\textnormal{dom}}_1(X)^* = W {\mathcal O}_0 W^{-1} = {\mathcal S}^{\textnormal{dom}}(F_X)$, where $W$ is a positive definite matrix (see \eqref{e:G=WOW^(-1)}) and ${\mathcal O}_0$ is a subgroup of $O(m)$. Define the measure $G_X(dx) = F_X(Wdx)$. Then, $G_X( {\mathcal O}_0 dx) = F_X(W {\mathcal O}_0 W^{-1}W dx) = F_X(W dx) = G_X(dx)$. Therefore, $ {\mathcal O}_0 \subseteq {\mathcal S}^{\textnormal{dom}}(G_X)$. Now let $A \in {\mathcal S}^{\textnormal{dom}}(G_X)$. Then, $F_X(WAdx) = G_X(Adx) = G_X(dx) = F_X(Wdx)$, whence $F_X(WAW^{-1}dy) = F_X(WW^{-1}dy) = F_X(dy)$. Thus, $A \in {\mathcal O}_0$. In other words, ${\mathcal O}_0 = {\mathcal S}^{\textnormal{dom}}(G_X)$, and thus without loss of generality we can assume that $W = I$ so that we can conveniently use the Euclidean norm in the ensuing argument.

Let $\phi(z)$ be the $M(n,\bbC)$-valued transform
$$
\phi(z) = \int_{\bbR^m} e^{i \langle z,x\rangle} \frac{\|x\|^2}{1 + \|x\|^2} F_X(dx), \quad z \in \bbR^m.
$$
By condition \eqref{e:int_x^2/(1+x^2)Fx(dx)}, $\phi(z)$ is well-defined pointwise. For a pair $k,l=1,\hdots,n$, let $\mu(dx)_{k l} = \frac{\|x\|^2}{1 + \|x\|^2} F_X(dx)_{k l}$ and $\mu(dx) = (\mu(dx)_{k l})_{k,l=1,\hdots,n}$. Now, consider the decomposition $\mu(dx)_{k l} = \Re\mu(dx)_{k l} + i \Im \mu(dx)_{k l}$.
The Borel measures $\Re \mu(dx)_{k l}, \Im \mu(dx)_{k l}$ are real-valued and finite. Hence, they can be broken up into positive and negative parts. So, for simplicity we can suppose that $\Re \mu(dx)_{k l}$, $\Im \mu(dx)_{k l}$ are positive measures. As a consequence,
$$
\phi(z)_{k l} = \alpha_{k l,1}\int_{\bbR^m} e^{i \langle z,x\rangle} \frac{\Re \mu(dx)_{k l}}{\alpha_{k l,1}} + i \alpha_{k l,2} \int_{\bbR^m} e^{i \langle z,x\rangle}  \frac{\Im \mu(dx)_{k l}}{\alpha_{k l,2}} =: \alpha_{k l,1} \phi(z)_{k l,1} + i \alpha_{k l,2} \phi(z)_{k l,2} \in \bbC,
$$
where the constants $\alpha_{k l,1}, \alpha_{k l,2} >0 $ make $\Re \mu(dx)_{k l}/ \alpha_{k l,1}$, $\Im \mu(dx)_{k l}/ \alpha_{k l,2}$ into probability measures. Therefore, $\phi(z)_{k l,j}$, $j=1,2$, are ($\bbC$-valued) characteristic functions. Thus, for $O \in O(m)$,
$$
\phi(O^* z)_{k l} = \phi(z)_{k l} \Leftrightarrow \phi(O^* z)_{k l,j} = \phi(z)_{k l,j}, \hspace{1mm}j=1,2 \Leftrightarrow \mu(O^* dx)_{k l,j} = \mu(dx)_{k l,j}, \hspace{1mm} j=1,2
$$
\begin{equation}\label{e:equivalence_symmetry_phi_mu}
\Leftrightarrow \mu(O^* dx) = \mu(dx) \Leftrightarrow F_X(O^* dx) = F_X(dx)
\end{equation}
for all $z \in \bbR^m $, where the last equivalence follows from $\frac{\|Ox\|^2}{1 + \|Ox\|^2} = \frac{\|x\|^2}{1 + \|x\|^2}$. Therefore, for $O \in O(m)$,
$$
\phi(O^*z) = \phi(z), \hspace{2mm}z \in \bbR^m \Leftrightarrow O \in {\mathcal S}^{\textnormal{dom}}(F_X),
$$
where $\phi(z) = ( \phi(z)_{kl} )_{k,l=1,\hdots,n}$. So, we can focus on the characteristic function matrix $\phi$ (for all entries $k,l$ simultaneously).

Suppose ${\mathcal G} \subseteq {\mathcal K} \in [{\mathcal G}]$ and recall that $W = I$. This implies that ${\mathcal K} \subseteq O(m)$; otherwise, for some non-orthogonal matrix $K \in {\mathcal K}$, $Kx \notin S^{m-1}$ for some $x \in S^{m-1}$. This contradicts the fact that ${\mathcal G}S^{m-1} \subseteq S^{m-1}$. So, for all $K \in {\mathcal K}$ and all $z \in \bbR^m$, $K z = G_{z,K} z$ for some $G_{z,K} \in {\mathcal G}$. But then, for this $K$, we have that for every $z \in \bbR^m $, $\phi(K^* z)  = \phi(G^*_{z,K} z) = \phi(z) $. Hence, \eqref{e:equivalence_symmetry_phi_mu} implies that $K \in {\mathcal S}^{\textnormal{dom}}(F_X) = G^{\textnormal{dom}}_1(X)^* = {\mathcal G}$, by Lemma \ref{l:G1_via_specmeas}, i.e., ${\mathcal K} \subseteq {\mathcal G}$. However, since ${\mathcal G} \subseteq {\mathcal K}$, we conclude that ${\mathcal K} = {\mathcal G}$. Hence ${\mathcal G}$ is maximal, as claimed. $\Box$
\end{proof}

\begin{remark}
Table \ref{table:OFBF_m=2_n=2_symmgroups} in Section \ref{s:on_G} contains an explicit description of maximal compact subgroups of $O(2)$, i.e., $O(2)$, ${\mathcal C}_\nu$, ${\mathcal D}_\nu$ for $\nu \in \bbN$, and ${\mathcal D}^*_1$. Corollary \ref{c:Gdom1,Gran1_dim_m=2,n=2} shows that, indeed, these types of subgroup make up ${\Bbb G}^{\textnormal{dom}}$ in dimension $m = 2$.
\end{remark}

\begin{remark}
As mentioned in Section \ref{s:Gran1}, in Examples \ref{ex:Gdom1=O(2)_Gran1=O(2)} and \ref{ex:Gdom1=D3_Gran1=SO(2)} below we construct two OFBFs with given domain and range symmetry groups.
\end{remark}

The next natural question is whether the converse of Proposition \ref{p:Gdom1=>maximality} is true, namely, given a compact maximal subgroup ${\mathcal G}$ of $GL(m,\bbR)$ one can build an OFBF whose domain symmetry group is ${\mathcal G}$. We answer this question in the negative for $(m,1)$ and in the affirmative for $(m,n) = (2,2)$ in Theorem \ref{t:OFBF_m,2} below. Starting from ${\mathcal G}$, the construction of the OFBF amounts to defining an appropriate spectral measure, expressed in polar coordinates \eqref{e:FX(dx)=x^-H_Xi(dx)_x^-H_x^(-1)}. The first step in this direction consists of defining a scalar-valued measure in the fashion of Meerschaert and Veeh \cite{meerschaert:veeh:1995}, p.\ 3, which draws upon the Haar measure of the compact group ${\mathcal G}$. Recall that, by \eqref{e:G=WOW^(-1)}, every compact subgroup of $GL(m,\bbR)$ is a subgroup of $O(m)$ up to a conjugacy. Therefore, to construct a measure with a given symmetry group, it suffices to directly consider subgroups of $O(m)$, instead.
\begin{definition}\label{def:LambdaD(dx)}
Let ${\mathcal G}$ be a maximal compact subgroup of $O(m)$, and let $D = \{x_1,\hdots,x_J\}$ be a set of points (pivots) in $S^{m-1}$ such that their respective orbits ${\mathcal G}x_1,\hdots, {\mathcal G}x_J$ are distinct, i.e., ${\mathcal G}x_{j_1} \neq {\mathcal G}x_{j_2}$ for $j_1 \neq j_2$. For $j = 1,\hdots, J$, denote by $n_j \in \bbN$ the (finite) number of connected components of the orbit ${\mathcal G}x_j$, where connectedness is defined in the topology induced by any matrix norm. We define the $\bbR^m$-Borel measure
\begin{equation}\label{e:Lambda_D}
\Lambda_{D}(dx) = \sum^{J}_{j=1}\int_{{\mathcal G}} j n_j \hspace{1mm}\delta_{x_j}(G dx) {\mathbf H}(dG) \geq 0,
\end{equation}
where, for $j = 1,\hdots,J$, $\delta_{x_j}$ is the Dirac measure concentrated on the pivot $x_j$, and ${\mathbf H}(dG)$ is the unique Haar probability measure on the group ${\mathcal G}$ (see Halmos \cite{halmos:2000}, pp.\ 254 and 263).
\end{definition}
Whenever there is no risk of ambiguity, we will drop the subscript $D$ and simply write $\Lambda(dx)$. Lemma \ref{l:Lambda,Lambda_j_properties} in the Appendix sums up some of the properties of the measure $\Lambda(dx)$ (see also Meerschaert and Veeh \cite{meerschaert:veeh:1995}, p.\ 3, proof of Theorem 1).

In the next proposition, we show that the symmetry group of the scalar-valued measure $\Lambda_D(dx)$ becomes exactly ${\mathcal G}$ after a sufficiently large, but finite, number of pivots is added to the set $D$. In the statement and proof of the proposition, $\textnormal{span}\{{\mathcal G}x_1,\hdots,{\mathcal G}x_J\}$ for a set of pivots $x_1,\hdots, x_J$ is understood as the space generated by the vectors in the orbits ${\mathcal G}x_1, \hdots,{\mathcal G}x_J$. Before stating and showing the proposition, we give a simple example of a measure $\Lambda_D(dx)$ and briefly discuss its properties. This will be useful when proving the proposition.
\begin{example}\label{ex:LambdaD(dx)}
Consider $\Lambda_D(dx)$ as in Definition \ref{def:LambdaD(dx)} with $J = 1$, $D = \Big\{e^{i \pi/4} \equiv \Big(\frac{\sqrt{2}}{2},\frac{\sqrt{2}}{2}\Big)^* \Big\}\subseteq S^1$ and ${\mathcal G} = {\mathcal C}_4 = \{O_{0}, O_{\pi/2}, O_{\pi}, O_{3 \pi/2}\}$. Then, the orbit associated with the pivot $e^{i \pi/4}$ is ${\mathcal G} e^{i \pi/4} = \{e^{i \pi/4}, e^{i 3\pi/4}, e^{i 5\pi/4}, e^{i 7\pi/4}\}$, where the complex exponentials, interpreted as vectors in $S^{1}$, are the connected components of the orbit. Since the Haar measure ${\mathbf H}(dG)$ of ${\mathcal C}_4$ assigns equal weight 1/4 to each element (connected component) of the group, we further obtain that
$$
\int_{\mathcal G} \hspace{1mm}\delta_{e^{i \pi/4}}(G e^{i \pi/4}){\mathbf H}(dG) = \delta_{e^{i \pi/4}}(I e^{i \pi/4}){\mathbf H}(I) = 1 \times \frac{1}{4} = \frac{1}{\# \{\textnormal{connected components of ${\mathcal G}e^{i \pi/4}$}\}}.
$$
In addition, note that $\int_{\mathcal G} \hspace{1mm}\delta_{e^{i \pi/4}}(G\{y\}){\mathbf H}(dG) = 0$ when the orbit ${\mathcal G}\{y\}$ does not include the vector $e^{i\pi/4} \in S^1$.
\end{example}

\begin{proposition}\label{p:existence_singular_measure}
Let ${\mathcal G}$ be a maximal compact subgroup of $O(m)$ and let $\Lambda_{D}(dx)$ be the measure \eqref{e:Lambda_D} for a given set of pivots $D =\{ x_1,\hdots,x_J \} \subseteq S^{m-1}$. Then, there is a finite set of pivots $D_k \subseteq S^{m-1}$, $D \subseteq D_k$, satisfying the conditions of Definition \ref{def:LambdaD(dx)}, such that the corresponding scalar-valued, $\bbR^m$-Borel measure $\Lambda_{D_k}(dx)$ as defined in \eqref{e:Lambda_D} has symmetry group
\begin{equation}\label{e:S_Lambda(dx)=G}
{\mathcal S}^{\textnormal{dom}}(\Lambda_{D_k}) = {\mathcal G}.
\end{equation}
Moreover, there is a set of pivots $D_* = \{x_1,\hdots,x_{J_*}\} \supseteq D_k$ such that
\begin{equation}\label{e:span=Rm}
\textnormal{span}\{{\mathcal G}x_1,\hdots,{\mathcal G}x_{J_*}\} = \bbR^m
\end{equation}
and
\begin{equation}\label{e:S_Lambda(dx)=G_extended_to_Dstar}
{\mathcal S}^{\textnormal{dom}}(\Lambda_{D_*}) = {\mathcal G}.
\end{equation}
\end{proposition}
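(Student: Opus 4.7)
The plan is to first observe that ${\mathcal G} \subseteq {\mathcal S}^{\textnormal{dom}}(\Lambda_D)$ for every choice of $D$, because the left-invariance of ${\mathbf H}(dG)$ makes each summand in \eqref{e:Lambda_D} ${\mathcal G}$-invariant. Under the distinguishing property of the weights $jn_j$ provided by Lemma \ref{l:Lambda,Lambda_j_properties} (possibly after a harmless reordering or mild perturbation of the pivots to render the orbit masses $jn_j$ pairwise distinct), any $A \in {\mathcal S}^{\textnormal{dom}}(\Lambda_D)$ that lies in $O(m)$ must preserve each orbit individually: $A x_j \in {\mathcal G} x_j$ for every $j$. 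The task reduces to enlarging $D$ so that orbit preservation forces $A \in {\mathcal G}$.

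I would then adjoin a countable sequence $\{y_k\}_{k \in \bbN} \subseteq S^{m-1}$ of points dense in $S^{m-1}$ whose ${\mathcal G}$-orbits are pairwise distinct from each other and from those in $D$ (such a sequence exists because the union of the previously chosen orbits has empty interior in $S^{m-1}$, so a generic small perturbation of a dense sequence lies in a new orbit). Writing $D^{(k)} := D \cup \{y_1,\ldots,y_k\}$ and ${\mathcal S}_k := {\mathcal S}^{\textnormal{dom}}(\Lambda_{D^{(k)}})$, the ${\mathcal S}_k$ form a decreasing chain of compact subgroups of $O(m)$ containing ${\mathcal G}$. Given any $A \in O(m) \setminus {\mathcal G}$, the maximality of ${\mathcal G}$ in its equivalence class \eqref{e:equiv_class_based_on_orbits} yields some $x \in S^{m-1}$ with $A x \notin {\mathcal G} x$; the set $U_A := \{x \in S^{m-1} : A x \notin {\mathcal G} x\}$ is open by the continuity of $x \mapsto \textnormal{dist}(Ax,{\mathcal G} x)$, so contains some $y_k$, yielding $A \notin {\mathcal S}_k$. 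Hence $\bigcap_{k \in \bbN} {\mathcal S}_k = {\mathcal G}$.

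The main obstacle is to upgrade this countable-intersection identity to \emph{finite} termination, ${\mathcal S}_{k_0} = {\mathcal G}$ for some finite $k_0$. I would invoke the descending chain condition for closed subgroups of the compact Lie group $O(m)$: the integers $\dim {\mathcal S}_k$ form a non-increasing sequence bounded below by $\dim {\mathcal G}$ and hence stabilize; once they do, the identity components ${\mathcal S}_k^0$ must coincide (two connected Lie subgroups of equal dimension with one containing the other are equal), and then the indices $[{\mathcal S}_k : {\mathcal S}_k^0]$ form a non-increasing sequence of positive integers and likewise stabilize. The chain ${\mathcal S}_k$ therefore stabilizes, necessarily at ${\mathcal G}$ by the intersection identity; set $D_k := D^{(k_0)}$. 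For the extension to $D_*$, if $V := \textnormal{span}(\bigcup_j {\mathcal G} x_j) \subsetneq \bbR^m$, pick $v \in S^{m-1} \setminus V$; since $V$ is ${\mathcal G}$-invariant one has ${\mathcal G} v \cap V = \emptyset$, so ${\mathcal G} v$ is a genuinely new orbit and adjoining $v$ cannot enlarge the symmetry group beyond ${\mathcal G}$ (which is already attained), while strictly increasing $\dim V$. Iterating at most $m - \dim V_0$ times yields a finite $D_*$ satisfying \eqref{e:span=Rm} and \eqref{e:S_Lambda(dx)=G_extended_to_Dstar}.
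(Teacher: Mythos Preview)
Your proposal is correct and follows essentially the same strategy as the paper: both iterate by enlarging the pivot set, use orbit preservation (via the distinct orbit masses of Lemma~\ref{l:Lambda,Lambda_j_properties}) together with maximality of ${\mathcal G}$ to control the symmetry group, and invoke the descending chain condition for compact Lie subgroups to ensure finite termination; your span-extension argument is likewise identical to the paper's. The only cosmetic difference is that the paper adaptively selects each new pivot $x$ so that ${\mathcal G}x \subsetneq {\mathcal S}^{\textnormal{dom}}(\Lambda_{D_{k-1}})x$, guaranteeing a \emph{strictly} decreasing chain from the outset, whereas you fix a dense sequence upfront, first establish $\bigcap_k {\mathcal S}_k = {\mathcal G}$ via the open-set argument, and then let the chain condition identify the stable value with that intersection.
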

\begin{proof}
In this proof, $\subset$ denotes proper set inclusion, whereas $\subseteq$ denotes weak set inclusion.

Let ${\mathbf H}$ be the Haar probability measure on ${\mathcal G}$. Then, ${\mathbf H}({\mathcal G})=1$ and ${\mathbf H}(GA)={\mathbf H}(AG)={\mathbf H}(A)$ for any Borel subset $A$ of ${\mathcal G}$ and any $G \in {\mathcal G}$. By Lemma \ref{l:Lambda,Lambda_j_properties}, ($ii$), each of the orbits ${\mathcal G}x_j$, $x_j \in S^{m-1}$, is a compact set, and the number of connected components of an orbit is no more than the (finite) number of connected components of ${\mathcal G}$. By Lemma \ref{l:Lambda,Lambda_j_properties}, ($i$), two orbits are either disjoint or coincide. Suppose that the orbit ${\mathcal G}x_j$ has $n_j$ connected components. Since $G{\mathcal G} = {\mathcal G}$, each of these components satisfies
\begin{equation}\label{e:integ_over_orbit=1/nj}
\int_{{\mathcal G}}\delta_{x_j}(G{\mathcal G}x_j){\mathbf H}(dG) = \frac{1}{n_j}
\end{equation}
(see Meerschaert and Veeh \cite{meerschaert:veeh:1995}, p.\ 4; cf.\ Example \ref{ex:LambdaD(dx)}). For a given $D = \{x_1,\hdots,x_J\} \subseteq S^{m-1}$ associated with distinct orbits ${\mathcal G}x_1,\hdots,{\mathcal G}x_J$ and the corresponding measure $\Lambda_D(dx)$ in \eqref{e:Lambda_D}, note that Lemma \ref{l:Lambda,Lambda_j_properties}, ($iii$) and ($v$) imply that $\Lambda_D(dx)$ is supported on the compact set ${\mathcal G}D$, and that it assigns different values to each orbit.

By Lemma \ref{l:Lambda,Lambda_j_properties}, ($iv$), each $G \in {\mathcal G}$ is a symmetry of $\Lambda_D$, i.e., ${\mathcal S}^{\textnormal{dom}}(\Lambda_{D}) \supseteq {\mathcal G}$. If ${\mathcal S}^{\textnormal{dom}}(\Lambda_{D})x = {\mathcal G}x$ for all $x \in \bbR^m$, then ${\mathcal S}^{\textnormal{dom}}(\Lambda_{D}) \sim {\mathcal G}$. Since ${\mathcal G}$ is maximal, then ${\mathcal S}^{\textnormal{dom}}(\Lambda_D)\subseteq {\mathcal G}$, whence ${\mathcal G} = {\mathcal S}^{\textnormal{dom}}(\Lambda_{D})$. Otherwise,
\begin{equation}\label{e:there_is_x_st_Gx_contained_Sdomx}
\textnormal{there exists some element $x \in S^{m-1} \backslash\{0\}$ such that ${\mathcal G}x \subset {\mathcal S}^{\textnormal{dom}}(\Lambda_{D})x$.}
\end{equation}
Set $D_1=D \cup \{x\}$, and consider the measure $\Lambda_{D_1}$. By Lemma \ref{l:Lambda,Lambda_j_properties}, ($vii$), $K \in {\mathcal S}^{\textnormal{dom}}(\Lambda_{D_1}) \Rightarrow  K {\mathcal G}y = {\mathcal G}y$, $y \in D_1$. Therefore, ${\mathcal S}^{\textnormal{dom}}(\Lambda_{D_1}){\mathcal G}y = {\mathcal G}y$, $y \in D_1$. Since ${\mathcal G} \subseteq {\mathcal S}^{\textnormal{dom}}(\Lambda_{D_1})$, and in view of \eqref{e:there_is_x_st_Gx_contained_Sdomx}, this yields
\begin{equation}\label{e:S(Lambda(D1))x=Gdom.x}
{\mathcal S}^{\textnormal{dom}}(\Lambda_{D_1})x={\mathcal G}x \subset {\mathcal S}^{\textnormal{dom}}(\Lambda_{D})x.
\end{equation}
Lemma \ref{l:Lambda,Lambda_j_properties}, ($vi$) with $D'=D_1$ and expression \eqref{e:S(Lambda(D1))x=Gdom.x} imply that ${\mathcal S}^{\textnormal{dom}}(\Lambda_{D_1}) \subset {\mathcal S}^{\textnormal{dom}}(\Lambda_{D})$, since the connected components of ${\mathcal S}^{\textnormal{dom}}(\Lambda_{D_1})x$ must be strictly contained in those of ${\mathcal S}^{\textnormal{dom}}(\Lambda_{D})x$. Continue in this manner to obtain a decreasing nested sequence of symmetry groups $\{ {\mathcal S}^{\textnormal{dom}}(\Lambda_{D_k}) \}_{k \in \bbN \cup \{0\}}$, all of which contain ${\mathcal G}$. By Lemma \ref{l:Lambda,Lambda_j_properties}, $(viii)$, ${\mathcal G}={\mathcal S}^{\textnormal{dom}}(\Lambda_{D_k})$ for some $k$.

Let $D_k = \{x_1,\hdots,x_k\}$ be the set of pivot vectors that we arrive at by following the procedure above. If the ensemble of points in the orbits $\{{\mathcal G}x_1,\hdots,{\mathcal G}x_k\}$ does not contain a basis of $\bbR^m$, then since $I \in {\mathcal G}$ there is a vector $x_{k+1}$ such that $\textnormal{span}\{{\mathcal G}x_1,\hdots,{\mathcal G}x_k\} \subset \textnormal{span}\{{\mathcal G}x_1,\hdots,{\mathcal G}x_{k+1}\}$. By Lemma \ref{l:Lambda,Lambda_j_properties}, ($iv$) and ($vi$), ${\mathcal G} \subseteq {\mathcal S}^{\textnormal{dom}}(\Lambda_{D_{k+1}} ) \subseteq {\mathcal S}^{\textnormal{dom}}(\Lambda_{D_{k}} )$. Thus,
\begin{equation}\label{e:G=Sdom(LambdaDk+1)}
{\mathcal G} = {\mathcal S}^{\textnormal{dom}}(\Lambda_{D_{k+1}}).
\end{equation}
Proceeding in this fashion, for a finite $J_* $ we obtain a set of pivots $D_* = \{x_1,\hdots,x_{J_*}\}$ satisfying \eqref{e:span=Rm}. Expression \eqref{e:S_Lambda(dx)=G_extended_to_Dstar} is a consequence of \eqref{e:G=Sdom(LambdaDk+1)}. $\Box$
\end{proof}

\subsection{On the characterization of ${\Bbb G}$}\label{s:on_G}

The following theorem is the main result of this paper, and concerns the set ${\Bbb G}$ of the possible pairs of domain and range symmetry groups defined in \eqref{e:def_groups}. It consists of three statements. Two of them characterize ${\Bbb G}$ (in dimensions $(m,n) = (m,1)$ and $(2,2)$) and the other establishes a subset of ${\Bbb G}$ (for dimension $(m,2)$). In particular, the theorem settles in the negative a central issue, namely, whether in general ${\Bbb G}^{\textnormal{dom}} \times {\Bbb G}^{\textnormal{ran}} \subseteq {\Bbb G}$ (the opposite set inclusion being straightforward). This can be illustrated in dimension $(m,n) = (2,2)$, Table \ref{table:OFBF_m_n=2_symmgroups}; indeed, Lemma \ref{l:(m,2)-OFBFs:range_symm_vs_-I} shows that some choices of range symmetry groups imply restrictions on the choice of domain symmetry groups.
\begin{table}[h]
\centering
\begin{tabular}{ccc}\hline
   &  $\textnormal{types of }G^{\textnormal{ran}}_1(X)$ & $\textnormal{restriction on }G^{\textnormal{dom}}_1(X)$ \\\hline
   & ${\mathcal C}_2$ & -- \\
\textnormal{($i$)} &  ${\mathcal D}_2$ & $-I \in G^{\textnormal{dom}}_1(X)$\\
\textnormal{($ii$)} &  $SO(2)$ & $-I \notin G^{\textnormal{dom}}_1(X)$ \\
\textnormal{($iii$)} &  $O(2)$ & $-I \in G^{\textnormal{dom}}_1(X)$ \\ \hline
\end{tabular}\caption{OFBF, $(m,n)  = (2,2)$: restrictions on the domain group imposed by the range symmetry group.}\label{table:OFBF_m_n=2_symmgroups}
\end{table}

\begin{remark}
The presence of the restrictions $-I \in G^{\textnormal{dom}}_1(X)$ and $-I \notin G^{\textnormal{dom}}_1(X)$ should not be surprising. Note that $-I \in G^{\textnormal{dom}}_1(X)$ is equivalent to the law of the OFBF $X$ being reversible in the sense that
\begin{equation}\label{e:X(-t)=X(t)}
\{X(-t)\}_{t \in \bbR^m} \stackrel{{\mathcal L}}= \{X(t)\}_{t \in \bbR^m}.
\end{equation}
For example, for $m \geq 1$ and $n=1$, the condition \eqref{e:X(-t)=X(t)} always holds. This can be seen by noting that (for $n = 1$) the control measure $F_X(dx)$ in \eqref{e:F_X(dx)=EB(dx)B(dx)} satisfies $F_X(dx) = \overline{F_X(dx)} = F_X(-dx)$, and hence that
$$
{\Bbb E}X(t_1)X(t_2) = \int_{\bbR^m} (e^{i \langle t_1,x\rangle}-1)(e^{-i \langle t_2,x\rangle}-1) F_X(dx)
$$
$$
= \int_{\bbR^m} (e^{-i \langle t_1,x\rangle}-1)(e^{i \langle t_2,x\rangle}-1) F_X(dx) = {\Bbb E}X(-t_1)X(-t_2).
$$
In particular, in this case all domain symmetry groups contain the element $-I$ (see also part $(i)$ of Theorem \ref{t:OFBF_m,2} below).
\end{remark}

The theorem's proof consists of constructing OFBF spectral measures in polar form $F_X(x) = r^{-H}\Delta(d \theta) r^{-H^*}r^{-1}dr$ (see \eqref{e:int-rep-spectral-F}) which display attainable pairs of domain and range symmetries. It draws upon the characterization of range symmetries in Proposition \ref{p:Gran_structure}, as well as on the class of spectral measures on ${\mathcal B}(S^{m-1})$ with given domain symmetries, provided in Proposition \ref{p:existence_singular_measure}. Especially in dimension $(m,n) = (2,2)$, where both domain and range symmetries can be non-trivial, the argument boils down to reducing the construction of the OFBF spectral measure to that of building appropriate OFBM spectral measures in every spherical direction, where the spherical measure $\Delta(d \theta)$ has the desired domain symmetry group. In the theorem's statement, we denote by
\begin{equation}\label{e:Gdom_restricted}
{\Bbb G}_{\max}, \quad {\Bbb G}_{\max}|_{-I \hspace{0.5mm} \bullet \hspace{0.5mm}{\mathcal G} },
\end{equation}
respectively, and up to conjugacies, the class of maximal subgroups of $O(m)$ and the subclass of those which satisfy the restriction $- I \hspace{0.5mm}\bullet \hspace{0.5mm}{\mathcal G}$, where $\bullet$ stands for $\notin$ or $\in$.

\begin{theorem}\label{t:OFBF_m,2}
Consider the class of OFBFs in dimension ($m,n$), and satisfying the conditions \eqref{e:minReeig(H)=<maxReeig(H)<minReeig(E*)} and \eqref{e:maxeigLambda(Theta)>0=>minReeigLambda(Theta)>0}.
\begin{itemize}
\item [$(i)$] For $m \geq 2$ and $n = 1$, the set of possible pairs of domain and range symmetry groups is given by
\begin{equation}\label{e:Psi_n=1}
{\Bbb G} = {\Bbb G}_{\max}|_{-I \in {\mathcal G} } \times \{\pm 1\};
\end{equation}
\item [$(ii)$] for $m \geq 2$ and $n =2$, the set of possible pairs of domain and range symmetry groups satisfies
\begin{equation}\label{e:Psi_n=2}
{\Bbb G} \supseteq  {\Bbb G}_{\max}|_{-I \in {\mathcal G}} \times \{{\mathcal C}_2, {\mathcal D}_2, O(2)\};
\end{equation}
\item [$(iii)$] for $(m,n) = (2,2)$, the set of possible pairs of domain and range symmetry groups is given by
\begin{equation}\label{e:Psi}
{\Bbb G} = {\Bbb G}_{\max}|_{-I \in {\mathcal G}} \times \{{\mathcal C}_2, {\mathcal D}_2, O(2)\} \hspace{3mm}\bigcup \hspace{3mm}{\Bbb G}_{\max}|_{-I \notin {\mathcal G} }\times \{{\mathcal C}_2, SO(2)\},
\end{equation}
where ${\Bbb G}_{\max}$ consists of the maximal groups described in the middle column of Table \ref{table:OFBF_m=2_n=2_symmgroups}.
\end{itemize}
In \eqref{e:Psi_n=1}, \eqref{e:Psi_n=2} and \eqref{e:Psi}, equalities and set inclusion hold up to conjugacies.
\end{theorem}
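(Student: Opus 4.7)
The plan is to construct, for each prescribed pair $({\mathcal G}, {\mathcal R})$ in the claimed sets, an explicit OFBF realizing that pair, and to derive the reverse inclusions from results already in place. The uniform vehicle for these constructions is the polar-harmonizable representation (Proposition \ref{p:akin_to_Prop2.3_Bierme_etal2007}): I would take $E = I$ throughout and specify the OFBF by prescribing the spherical measure $\Delta(d\theta)$ on $S^{m-1}$, then verify the domain and range symmetries by Lemmas \ref{l:Gran1_via_specmeas} and \ref{l:G1_via_specmeas} together with Proposition \ref{p:Gran_structure}.

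For part $(i)$, the range factor $\{\pm 1\}$ is forced because $O(1) = \{\pm 1\}$, and the preceding remark shows that $-I$ is automatically a domain symmetry when $n = 1$ (since the scalar spectral measure is real-valued and hence symmetric under $x \mapsto -x$). To realize every maximal ${\mathcal G} \subseteq O(m)$ containing $-I$ as a domain symmetry, I would set
\[
F_X(dx) = r^{-2H-1} \Lambda_{D_*}(d\theta)\, dr, \quad x = r\theta,
\]
with $H \in (0,1)$ and $\Lambda_{D_*}$ the scalar measure on $S^{m-1}$ furnished by Proposition \ref{p:existence_singular_measure}, whose domain symmetry set is exactly ${\mathcal G}$. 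Positivity \eqref{e:Gamma(t,t)_is_pos_def} follows from the span condition \eqref{e:span=Rm} in the style of Remark \ref{r:suff_condition_Delta_is_Hermitian_posdef}, and the domain symmetry transfers directly to $F_X$ via Lemma \ref{l:G1_via_specmeas}.

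For parts $(ii)$ and $(iii)$, the spherical measure would be of the form $\Delta(d\theta) = M\, \Lambda_{D_*}(d\theta)$ with $M \in \overline{{\mathcal S}_{\geq 0}(2,\bbC)}$ constant and $\Lambda_{D_*}$ as above. The point is that the domain symmetry is controlled entirely by the scalar factor $\Lambda_{D_*}$ (since $M$ does not depend on $\theta$), while the range symmetry is read off from $M$ and $H$ through Proposition \ref{p:Gran_structure}. I would realize the range group $O(2)$ with $M = I$ and scalar $H$; the range group ${\mathcal D}_2$ with $M = \textnormal{diag}(m_1,m_2)$, $m_1 \neq m_2 > 0$, and scalar $H$; the range group ${\mathcal C}_2$ with a generic $M \in {\mathcal S}_{> 0}(2,\bbR)$ and suitable non-scalar $H$; and, only in part $(iii)$, the range group $SO(2)$ with $\Im M \neq {\mathbf 0}$. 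The reverse inclusion in $(iii)$ then amounts to combining Corollary \ref{c:range_groups_dim2} (which enumerates the possible range groups in dimension $2$) with Lemma \ref{l:(m,2)-OFBFs:range_symm_vs_-I} (which produces the $\pm I$ constraints in Table \ref{table:OFBF_m_n=2_symmgroups}) to rule out the pairs in the complement of \eqref{e:Psi}.

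The main obstacle is the $SO(2)$ range case in part $(iii)$. By Lemma \ref{l:(m,2)-OFBFs:range_symm_vs_-I} it forces $-I \notin G^{\textnormal{dom}}_1(X)$, so the pivots generating $\Lambda_{D_*}$ must be chosen on orbits ${\mathcal G}x_j$ that are not $-I$-invariant, making $\Lambda_{D_*}$ necessarily singular. Simultaneously, the Hermitian constraint $\Delta(-d\theta) = \overline{\Delta(d\theta)}$ interacts non-trivially with $\Im M \neq {\mathbf 0}$, and one must verify in tandem that the exact domain group of the resulting $\Delta$ is ${\mathcal G}$ (not the larger group generated by ${\mathcal G}$ and $-I$) and that Proposition \ref{p:Gran_structure} returns range group exactly $SO(2)$ (not $O(2)$). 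These two \emph{exactness} checks — showing the constructed symmetry groups are no larger than intended — are where the freedom to use singular rather than absolutely continuous spectral measures becomes essential, as signalled in the introduction. A final bookkeeping step lists the elements of ${\Bbb G}_{\max}$ for $m = 2$ (the middle column of Table \ref{table:OFBF_m=2_n=2_symmgroups}) to close the closed-form characterization \eqref{e:Psi}.
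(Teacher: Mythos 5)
Your overall strategy coincides with the paper's: build the spectral measure in polar coordinates as a matrix factor times the scalar measure $\Lambda_{D_*}$ of Proposition \ref{p:existence_singular_measure}, read off the domain group from the spherical factor and the range group from the matrix factor together with $H$ via Proposition \ref{p:Gran_structure}, and close part $(iii)$ with Corollary \ref{c:range_groups_dim2} and Lemma \ref{l:(m,2)-OFBFs:range_symm_vs_-I}. However, there is a genuine gap at exactly the point you label ``the main obstacle'': the Ansatz $\Delta(d\theta) = M\,\Lambda_{D_*}(d\theta)$ with a constant matrix $M$ and $\Im M \neq {\mathbf 0}$ is simply not a Hermitian measure, since $\Delta(-d\theta) = M\Lambda_{D_*}(-d\theta)$ while $\overline{\Delta(d\theta)} = \overline{M}\Lambda_{D_*}(d\theta)$, and these cannot agree unless $M$ is real or $\Lambda_{D_*}$ and its antipodal image interact in a controlled way. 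You flag this but do not resolve it, and the resolution is the substantive content of the construction. The paper's fix (Lemma \ref{l:Xi(dx)}) is the symmetrized form $\Xi(d\theta) = AA^*\Lambda_D(d\theta) + \overline{AA^*}\Lambda_D(-d\theta)$, which is automatically Hermitian, \emph{combined with} the requirement that when $-I \notin {\mathcal G}$ every pivot satisfies $-x_j \notin {\mathcal G}x_j$, so that $\Lambda_D$ and $\Lambda_D(-\cdot)$ have disjoint supports and $\Xi$ restricted to each connected component is a genuine complex multiple of $\Lambda_D$. Producing such pivots is nontrivial: it requires Lemma \ref{l:orbits_maximal_groups}, which rests on the explicit classification of maximal subgroups of $O(2)$, and its unavailability for $m \geq 3$ is precisely why part $(iii)$ is stated only for $(m,n)=(2,2)$ (Remark \ref{r:-I_on_mathcalG}). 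Your sketch does not explain why such pivots exist, nor why the resulting domain group is exactly ${\mathcal G}$ rather than the group generated by ${\mathcal G}$ and $-I$.

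A second, related omission: the choice $\Im(AA^*) = {\mathbf 0}$ versus $\neq {\mathbf 0}$ must be coordinated with whether $-I \in {\mathcal G}_1$. If $-I \in {\mathcal G}_1$ then $\Lambda_{D_*}(-d\theta) = \Lambda_{D_*}(d\theta)$ and the symmetrized measure collapses to $2\Re(AA^*)\Lambda_{D_*}(d\theta)$, so the imaginary part is annihilated and $SO(2)$ is unreachable as a range group; conversely, if $-I \notin {\mathcal G}_1$ but one takes $\Im(AA^*) = {\mathbf 0}$, the measure $\Xi$ acquires the extra domain symmetry $-I$ and the domain group is no longer ${\mathcal G}_1$ (see the remark following Lemma \ref{l:Xi(dx)} and condition \eqref{e:condition_on_Im_Delta}). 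This coordination is what generates the pairing structure in \eqref{e:Psi}, and without it your construction would either fail to be exact or would realize pairs outside ${\Bbb G}$. Your verification of the range groups through Proposition \ref{p:Gran_structure} is otherwise on the right track, and your treatment of part $(i)$ is essentially the paper's.
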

\begin{proof}
Recall that Proposition \ref{p:akin_to_Prop2.3_Bierme_etal2007} provides the general representation \eqref{e:change_of_variables_into_polar_coordinates} of any spectral measure $F_X(dx)$ in polar coordinates. We will construct suitable OFBF spectral measures with the desired domain and range symmetry groups. In polar coordinates, these measures will have the form
\begin{equation}\label{e:FX(dx)=x^-H_Xi(dx)_x^-H_x^(-1)}
F_X(dx) = r^{-H} \Xi(d\theta) r^{-H^*} r^{-1} dr,
\end{equation}
where the spherical component is given by the measure
\begin{equation}\label{e:Xi(dtheta)}
\Xi(d\theta) = AA^* \Lambda(d \theta) + \overline{AA^*}\Lambda(-d\theta), \quad S_0 = S^{m-1},
\end{equation}
as constructed in Lemma \ref{l:Xi(dx)}. In \eqref{e:Xi(dtheta)}, the matrix $A$ will be appropriately chosen (together with $H$) so that $X$ has the desired range symmetry group, and the measure $\Lambda(d \theta)$ will be obtained from Proposition \ref{p:existence_singular_measure}.

Given the spectral measure \eqref{e:FX(dx)=x^-H_Xi(dx)_x^-H_x^(-1)}, we claim that $G^{\textnormal{dom}}_1(X)$ and $G^{\textnormal{ran}}_1(X)$ are determined, respectively, by the component $\Xi(d\theta)$, and by the latter combined with the parameter $H$. Indeed, for a given $F_X(dx)$ of the form \eqref{e:FX(dx)=x^-H_Xi(dx)_x^-H_x^(-1)}, by Lemmas \ref{l:G1_via_specmeas} and \ref{l:S(FX(dx))=G}, $(i)$,
\begin{equation}\label{e:Gdom(X)=SdomXi}
G^{\textnormal{dom}}_1(X) = {\mathcal S}^{\textnormal{dom}}(F_X)^* =  {\mathcal S}^{\textnormal{dom}}(\Xi)^*.
\end{equation}
Moreover, by Lemmas \ref{l:Gran1_via_specmeas} and \ref{l:S(FX(dx))=G}, $(ii)$,
\begin{equation}\label{e:Gran(X)=Gran(BH)}
G^{\textnormal{ran}}_1(X) = {\mathcal S}^{\textnormal{ran}}(F_X) = G^{\textnormal{ran}}_1(B_{H}),
\end{equation}
where
\begin{equation}\label{e:OFBM_main_theorem}
B_{H}, \quad F_{B_H}(dx) = r^{-H }\{AA^* \delta_{\{1\}}(d \theta) + \overline{AA^*} \delta_{\{-1\}}(d \theta)\}r^{-H^*}r^{-1}dr
\end{equation}
are an OFBM with parameters $(H, \Re(AA^*), \Im (AA^*))$ and its spectral measure expressed in polar coordinates notation. Relations \eqref{e:Gdom(X)=SdomXi} and \eqref{e:Gran(X)=Gran(BH)} show that the domain and range symmetry groups of the associated OFBF $X$ are determined, respectively, by $\Xi(d \theta)$ and by the latter and $H$, as claimed.

We now show $(ii)$. Fix ${\mathcal G}_2 \in \{{\mathcal C}_2, {\mathcal D}_2, O(2)\}$. By Corollary 5.1 in Didier and Pipiras \cite{didier:pipiras:2012}, ${\mathcal G}_2$ is a range symmetry group attainable by a time-reversible OFBM (see \eqref{e:time-reversible}). This means that we can choose $H$ and $AA^*$ such that $A_2 = 0$ in $A = A_1 + i A_2$ and the OFBM \eqref{e:OFBM_main_theorem} has range symmetry group $G^{\textnormal{ran}}_1(B_H) = {\mathcal G}_2$. Now pick ${\mathcal G}_1 \in {\Bbb G}_{\max}|_{-I \in {\mathcal G}}$. Then, by Lemma \ref{l:Xi(dx)}, ($ii$), the associated measure $\Xi(d \theta)$ in \eqref{e:Xi(dtheta)} has domain symmetry group ${\mathcal G}_1$. Then, relations \eqref{e:FX(dx)=x^-H_Xi(dx)_x^-H_x^(-1)}, \eqref{e:Gdom(X)=SdomXi} and \eqref{e:Gran(X)=Gran(BH)} ensure that the induced random field $X$ satisfies $(G^{\textnormal{dom}}_1(X),G^{\textnormal{ran}}_1(X)) = ({\mathcal G}_1,{\mathcal G}_2)$. Moreover, $X$ is proper as a consequence of \eqref{e:Lambda(Theta0)_is_proper} and \eqref{e:span=Rm}, as explained in Remark \ref{r:suff_condition_Delta_is_Hermitian_posdef} Therefore, $X$ is an OFBF (with exponents $E = I$ and $H$).

To show $(i)$, note that the constraint $F_X(-dx) = \overline{F_X(dx)}$ for any spectral measure boils down to $F_X(-dx) = F_X(dx)$ when $n = 1$. By Lemma \ref{l:G1_via_specmeas}, this is equivalent to $-I$ being in the domain symmetry group of the associated OFBF $X$, i.e., ${\Bbb G} \subseteq {\Bbb G}_{\max}|_{-I \in {\mathcal G} } \times \{\pm 1\}$. To establish the converse, the same procedure for showing $(ii)$ can be applied with added simplicity stemming from scalar-valued parameters $H$ and $AA^*$.

The statement $(iii)$ is an immediate consequence of Proposition \ref{p:OFBF_m,n=2,2}, shown below. $\Box$\\
\end{proof}

The following result complements Corollary \ref{c:range_groups_dim2}. It states that for every \textit{individual} group (in contrast with a pair thereof) described in Table \ref{table:OFBF_m=2_n=2_symmgroups}, there is an OFBF exhibiting that domain or range symmetry group.
\begin{corollary}\label{c:Gdom1,Gran1_dim_m=2,n=2}
For the class of OFBFs satisfying the conditions \eqref{e:minReeig(H)=<maxReeig(H)<minReeig(E*)} and \eqref{e:maxeigLambda(Theta)>0=>minReeigLambda(Theta)>0}, the classes ${\Bbb G}^{\textnormal{dom}}$ for $m = 2$ and ${\Bbb G}^{\textnormal{ran}}$ for $n = 2$ can be described as in the middle and right columns of Table \ref{table:OFBF_m=2_n=2_symmgroups}, respectively.
\end{corollary}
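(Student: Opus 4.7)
The plan is to handle the two columns of Table \ref{table:OFBF_m=2_n=2_symmgroups} separately, each amounting to a short deduction from results already in hand. The right column, describing ${\Bbb G}^{\textnormal{ran}}$ for $n=2$, is immediate from Corollary \ref{c:range_groups_dim2}: up to positive definite conjugacy, the possible range symmetry groups are ${\mathcal C}_2, {\mathcal D}_2, SO(2), O(2)$, and the proof of that corollary already exhibits OFBMs (the case $m=1$) realizing each of these groups via Didier and Pipiras \cite{didier:pipiras:2012}.

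For the middle column, describing ${\Bbb G}^{\textnormal{dom}}$ for $m=2$, I would first establish the inclusion $\subseteq$. Applying the structure theorem \eqref{e:G=WOW^(-1)} reduces $G^{\textnormal{dom}}_1(X)$, modulo positive definite conjugacy, to a subgroup of $O(2)$, and Proposition \ref{p:Gdom1=>maximality} forces this subgroup to be maximal with respect to the orbit equivalence \eqref{e:equiv_class_based_on_orbits}. The classification of compact subgroups of $O(2)$ then enumerates the candidates: the finite groups ${\mathcal C}_\nu$ and ${\mathcal D}_\nu$, their orthogonal conjugates such as ${\mathcal D}^*_1$, and the infinite subgroups $SO(2)$ and $O(2)$. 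Since $[SO(2)]=[O(2)]$ (both produce circular orbits), $SO(2)$ is removed from the maximal list, leaving exactly the groups displayed in the middle column.

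For the matching inclusion $\supseteq$, I would appeal to Theorem \ref{t:OFBF_m,2}(iii). For any maximal group ${\mathcal G}$ on the list, regardless of whether $-I \in {\mathcal G}$ or not, the pair $({\mathcal G}, {\mathcal C}_2)$ lies in ${\Bbb G}$ by \eqref{e:Psi}, since ${\mathcal C}_2$ belongs to both factors $\{{\mathcal C}_2, {\mathcal D}_2, O(2)\}$ and $\{{\mathcal C}_2, SO(2)\}$ of the union. This provides, for each such ${\mathcal G}$, an OFBF in dimension $(2,2)$ whose domain symmetry group is conjugate to ${\mathcal G}$.

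The step I would expect to be most delicate is the verification that the listed maximal subgroups are genuinely distinct under positive definite conjugation. In particular, ${\mathcal D}^*_1$ and ${\mathcal D}_1$ are conjugate in $O(2)$ (via $O_{\pi/2}$) but not by any positive definite matrix, since a positive definite conjugation cannot permute the eigenvectors of a reflection. This subtlety is already absorbed into the construction supporting Theorem \ref{t:OFBF_m,2}, so no fresh calculation is required here; the corollary follows simply by assembling the two ingredients above.
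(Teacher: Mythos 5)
Your proposal is correct and follows essentially the same route as the paper: the range column is read off from Corollary \ref{c:range_groups_dim2}, and the domain column from Theorem \ref{t:OFBF_m,2}$(iii)$ together with the classification of maximal compact subgroups of $O(2)$ (the paper cites Cohen et al.\ for the latter, while you unpack the maximality constraint via Proposition \ref{p:Gdom1=>maximality} and realize each group by pairing it with ${\mathcal C}_2$, which is exactly what the equality \eqref{e:Psi} delivers). The extra remarks on discarding $SO(2)$ and on ${\mathcal D}^*_1$ are consistent with the paper and add nothing problematic.
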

\begin{proof}
In view of Corollary \ref{c:range_groups_dim2}, we only need to describe the middle column in Table \ref{table:OFBF_m=2_n=2_symmgroups}. The latter is a consequence of Theorem \ref{t:OFBF_m,2}, $(iii)$, and the complete description of compact maximal groups in dimension $m= 2$  (see Cohen et al.\ \cite{cohen:meerchaert:rosinski:2010}, p.\ 2404). $\Box$\\
\end{proof}

\vspace{-0.5cm}
\begin{center}
\begin{table}[h]
\centering
\begin{tabular}{lcc}\hline
\textsl{type} & $G^{\textnormal{dom}}_1(X) \cong \hdots$ & $G^{\textnormal{ran}}_1(X)\cong \hdots$\\\hline
full & $O(2)$ & $O(2)$\\
rotational & $-$ & $SO(2)$ \\
cyclic & ${\mathcal C}_{\nu}$, $\nu \in \bbN$ & ${\mathcal C}_2$\\
dihedral & ${\mathcal D}_{\nu}$, $\nu \in \bbN$, ${\mathcal D}^*_1$ & ${\mathcal D}_2$\\\hline
\end{tabular}\caption{OFBF: description of the (individual) domain (${\Bbb G}^{\textnormal{dom}}$, $m = 2$) and range (${\Bbb G}^{\textnormal{ran}}$, $n = 2$) symmetry groups. In the middle column, isotropy corresponds to $G^{\textnormal{dom}}_1(X) = O(2)$ (with $W = I$; see Section \ref{s:isotropy}), the remaining cases describing all types of anisotropy.}\label{table:OFBF_m=2_n=2_symmgroups}
\end{table}
\end{center}
\vspace{-1cm}

The next proposition pertains to the case of dimension $(m,n) = (2,2)$. It shows that (almost) every possible combination of domain and range symmetry groups can be attained by some OFBF whose spectral density is either singular or has a density (i.e., is absolutely continuous with respect to the Lebesgue measure). The special case not covered by singular measures is that of pairs including the domain group $O(2)$; indeed, in Proposition \ref{p:isotropy_m,n_case} below, it is shown that isotropy implies that the spectral measure $F_X(dx)$ is absolutely continuous. Figure \ref{f:asymm_sphere} is provided to help visualize part of the argument (see also Examples \ref{ex:Gdom1=O(2)_Gran1=O(2)} and \ref{ex:Gdom1=D3_Gran1=SO(2)}).

\begin{proposition}\label{p:OFBF_m,n=2,2}
Let $({\mathcal G}_1,{\mathcal G}_2)$ be a pair of domain and range symmetry groups as described in \eqref{e:Psi}. Then,
\begin{itemize}
\item [(i)] if ${\mathcal G}_1$ is not conjugate to $O(2)$, there is an OFBF $X$ with singular spectral measure $F_X(dx)$ such that
    \begin{equation}\label{e:Gdom,Gran=G1,G2}
    (G^{\textnormal{dom}}_1(X),G^{\textnormal{ran}}_1(X)) = ({\mathcal G}_1,{\mathcal G}_2);
    \end{equation}
\item [(ii)] there is an OFBF $X$ with absolutely continuous spectral measure $F_X(dx) = f_X(x)dx$ such that \eqref{e:Gdom,Gran=G1,G2} holds.
\end{itemize}
\end{proposition}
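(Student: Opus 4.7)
The plan is to realize each pair $({\mathcal G}_1,{\mathcal G}_2)$ in \eqref{e:Psi} by an explicit OFBF whose spectral measure is given, in polar form, by
\[
F_X(dx) = r^{-H}\Xi(d\theta)\,r^{-H^*}r^{-1}\,dr,
\]
with spherical component
\[
\Xi(d\theta) = AA^*\,\Lambda(d\theta) + \overline{AA^*}\,\Lambda(-d\theta)
\]
on $S_0 = S^1$, exactly as in the proof of Theorem \ref{t:OFBF_m,2}. By \eqref{e:Gdom(X)=SdomXi}--\eqref{e:Gran(X)=Gran(BH)}, the domain symmetry $G^{\textnormal{dom}}_1(X)$ is governed by $\Xi(d\theta)$ alone, while the range symmetry $G^{\textnormal{ran}}_1(X)$ agrees with that of the induced OFBM $B_H$ of \eqref{e:OFBM_main_theorem}. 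I would first use Corollary 5.1 in Didier and Pipiras \cite{didier:pipiras:2012} to pick $(H,AA^*)$ so that $B_H$ has range symmetry ${\mathcal G}_2$. The compatibility of Table \ref{table:OFBF_m_n=2_symmgroups} arises automatically: $\Im(AA^*) = {\mathbf 0}$ (time-reversibility, cf.\ \eqref{e:time-reversible}) corresponds to $-I \in {\mathcal G}_1$ and ${\mathcal G}_2 \in \{{\mathcal C}_2,{\mathcal D}_2,O(2)\}$, while $\Im(AA^*) \neq {\mathbf 0}$ corresponds to $-I \notin {\mathcal G}_1$ and ${\mathcal G}_2 \in \{{\mathcal C}_2,SO(2)\}$.

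For part $(i)$, since ${\mathcal G}_1$ is not conjugate to $O(2)$ it is a finite maximal subgroup of $O(2)$, and Proposition \ref{p:existence_singular_measure} supplies a finite pivot set $D_* \subset S^1$ so that the Haar-built atomic measure $\Lambda := \Lambda_{D_*}$ of \eqref{e:Lambda_D} satisfies ${\mathcal S}^{\textnormal{dom}}(\Lambda) = {\mathcal G}_1$ with orbits spanning $\bbR^2$. Plugging this $\Lambda$ into the formulas for $\Xi$ and then $F_X$ yields a singular spectral measure, since both the spherical factor (a finite atomic measure on $S^1$) and the radial push via $E^*$-homogeneity concentrate on a finite union of $E^*$-rays. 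Properness then follows from the span condition \eqref{e:span=Rm} combined with Remark \ref{r:suff_condition_Delta_is_Hermitian_posdef}, so $X$ is a genuine OFBF with exponents $(E,H) = (I,H)$.

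For part $(ii)$, the same scheme is repeated but with $\Lambda_{D_*}$ replaced by an absolutely continuous version $\widetilde{\Lambda}(d\theta) = \widetilde{g}(\theta)\,d\theta$ on $S^1$: each Dirac $\delta_{x_j}$ in \eqref{e:Lambda_D} is replaced by a smooth nonnegative bump $\varphi_j$ of mass $1$, supported in a small arc-neighborhood of $x_j$, with the orbit-bump supports pairwise disjoint across distinct orbits. Retaining the orbit-dependent weights $j n_j$ keeps the ``different orbits carry different mass'' feature that drives the proof of Proposition \ref{p:existence_singular_measure}, so the same argument gives ${\mathcal S}^{\textnormal{dom}}(\widetilde{\Lambda}) = {\mathcal G}_1$. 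The resulting $\Xi$ is absolutely continuous on $S^1$, so the polar change of variables \eqref{e:change_of_variables_into_polar_coordinates} makes $F_X$ absolutely continuous on $\bbR^2 \backslash \{0\}$. The isotropic case ${\mathcal G}_1 = O(2)$, which arises only in $(ii)$, is handled by simply taking $\widetilde{g} \equiv c > 0$, i.e., $\widetilde{\Lambda}$ proportional to the uniform measure on $S^1$.

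The main obstacle is the ``no extra symmetry'' verification: showing that the matrix-valued $\Xi(d\theta)$ has domain symmetry exactly ${\mathcal G}_1$ and not something strictly larger, despite the antipodal reflection in the second summand. When $-I \in {\mathcal G}_1$ one has $\Lambda(-d\theta) = \Lambda(d\theta)$ and $\Xi = 2\Re(AA^*)\Lambda$, so ${\mathcal S}^{\textnormal{dom}}(\Xi) = {\mathcal S}^{\textnormal{dom}}(\Lambda) = {\mathcal G}_1$ is immediate. When $-I \notin {\mathcal G}_1$, the two summands live on disjoint unions of orbits, and the condition $\overline{AA^*} \neq AA^*$ (guaranteed by $\Im(AA^*) \neq {\mathbf 0}$) is precisely what prevents any putative extra symmetry from swapping them; this is the mechanism behind the pairing rule of Table \ref{table:OFBF_m_n=2_symmgroups}. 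I expect this step to be packaged as the auxiliary Lemma \ref{l:Xi(dx)} referenced in the proof of Theorem \ref{t:OFBF_m,2}, and to constitute the technical heart of the argument.
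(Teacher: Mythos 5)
Your part $(i)$ is essentially the paper's argument: it reuses the construction from Theorem \ref{t:OFBF_m,2}$(ii)$, feeding the atomic measure $\Lambda_{D_*}$ of Proposition \ref{p:existence_singular_measure} into $\Xi(d\theta)=AA^*\Lambda(d\theta)+\overline{AA^*}\Lambda(-d\theta)$ and reading off the two symmetry groups from \eqref{e:Gdom(X)=SdomXi}--\eqref{e:Gran(X)=Gran(BH)}, with singularity coming from the finiteness of the orbits. For part $(ii)$, however, you take a genuinely different route. The paper does \emph{not} mollify the singular spherical measure; it builds an explicit piecewise-constant matrix-valued density on $S^1$, cutting the circle into $4\nu$ slices of angular size $\frac{1}{4}\frac{2\pi}{\nu}$ and assigning matrix values $\Delta_{i,{\mathcal D}}$ (two values, arranged symmetrically about the reflection axes, for ${\mathcal D}_\nu$) or $\Delta_{i,{\mathcal C}}$ (four pairwise-distinct values, to kill all reflections, for ${\mathcal C}_\nu$), with imaginary parts switched on or off according to the parity of $\nu$; the constant case handles $O(2)$. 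That construction yields a density bounded away from zero on the sphere, makes properness and the range-group computation via \eqref{e:Gran_structure_n=2} immediate, and the domain group is read off directly from the slice pattern (Figure \ref{f:asymm_sphere}) without revisiting the Haar-measure machinery. Your mollification scheme is more uniform across groups and arguably more systematic, but it shifts the burden onto re-proving the orbit lemmas for arc-supported measures.

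That deferred verification is the one place where you are too quick: the proof of Proposition \ref{p:existence_singular_measure} leans on Lemma \ref{l:Lambda,Lambda_j_properties}$(vii)$, which uses that a symmetry permutes the \emph{connected components} of the orbits and hence maps ${\mathcal G}x_j$ onto ${\mathcal G}x_j$ exactly; once atoms become arcs, a symmetry only maps arcs onto arcs, and you must add the (true, but not stated) observation that an orthogonal map carrying one arc of $S^1$ onto another carries centre to centre, after first arguing that any symmetry of the arc-supported measure lies in $O(2)$ (which here follows because a linear map preserving an open arc of $S^1$ must be orthogonal). You also need the bump supports small enough that arcs around distinct orbit points, and around antipodal orbits when $-I\notin{\mathcal G}_1$ (cf.\ the condition $-x_j\notin{\mathcal G}x_j$ in the proof of Lemma \ref{l:Xi(dx)}), remain pairwise disjoint, so that the decomposition \eqref{e:Ksi(BandC)} and hence the range-group computation of Lemma \ref{l:S(FX(dx))=G}$(ii)$ survive. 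With those points supplied your argument goes through, but as written they are asserted rather than proved.
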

\begin{proof}
Throughout this proof, we denote by
\begin{equation}\label{e:Gdom1,Gran1_generic}
G^{\textnormal{dom}}_1 \textnormal{ and }G^{\textnormal{ran}}_1
\end{equation}
generic domain and range symmetry groups, respectively, of an OFBF $X$ being constructed. In the end, we are able to write $G^{\textnormal{dom}}_1 = G^{\textnormal{dom}}_1(X)$ and $G^{\textnormal{ran}}_1 = G^{\textnormal{ran}}_1(X)$.

In both cases $(i)$ and $(ii)$, the proof is by construction, but based on different techniques. We will make use of the representation \eqref{e:FX(dx)=x^-H_Xi(dx)_x^-H_x^(-1)} in polar coordinates, where the choice of the pair of domain and range symmetry groups has to account for the restrictions described in Table \ref{table:OFBF_m_n=2_symmgroups}.

To show $(i)$, we will apply the same technique for showing $(ii)$ in Theorem \ref{t:OFBF_m,2}. Fix a group ${\mathcal G}_2 \in {\Bbb G}^{\textnormal{ran}}_1 = \{{\mathcal C}_2,{\mathcal D}_2, SO(2),O(2)\}$, where, without loss of generality, we can disregard conjugacies $W \in {\mathcal S}_{>0}(2,\bbR)$. Recall the notation \eqref{e:Gdom_restricted} for maximal subgroups. In light of Table \ref{table:OFBF_m_n=2_symmgroups}, choose the parameters $(H,\Re(AA^*),\Im(AA^*))$ for the OFBM \eqref{e:OFBM_main_theorem} according to the following recipe.
\begin{itemize}
\item If ${\mathcal G}_2$ implies $-I \in G^{\textnormal{dom}}_1$ by Table \ref{table:OFBF_m_n=2_symmgroups}, then choose
\begin{enumerate}
\item [(a)] any ${\mathcal G}_1 \in {\Bbb G}_{\max}|_{-I \in {\mathcal G}}$;
\item [(b)] a parametrization $(H,\Re(AA^*),\Im(AA^*))$ from the OFBM \eqref{e:OFBM_main_theorem} such that $\Re (AA^*)$ is positive definite, $A_2 = {\mathbf 0}$ and $G^{\textnormal{ran}}_1(B_H) = {\mathcal G}_2$.
\end{enumerate}
\item If ${\mathcal G}_2$ implies $-I \notin G^{\textnormal{dom}}_1$ by Table \ref{table:OFBF_m_n=2_symmgroups}, then choose
\begin{enumerate}
\item [(a)] any ${\mathcal G}_1 \in {\Bbb G}_{\max}|_{-I \notin {\mathcal G}}$;
\item [(b)] a parametrization $(H,\Re(AA^*),\Im(AA^*))$ from the OFBM \eqref{e:OFBM_main_theorem} such that $\Re (AA^*)$ is positive definite and $G^{\textnormal{ran}}_1(B_H) = {\mathcal G}_2 $.
\end{enumerate}
\item If ${\mathcal G}_2$ is compatible with either $-I \in G^{\textnormal{dom}}_1$ or $I \in G^{\textnormal{dom}}_1$ as described in Table \ref{table:OFBF_m_n=2_symmgroups}, then choose
\begin{enumerate}
\item [(a)] any ${\mathcal G}_1 \in {\Bbb G}_{\max}$;
\item [(b)] a parametrization $(H,\Re(AA^*),\Im(AA^*))$ from the OFBM \eqref{e:OFBM_main_theorem} such that $\Re (AA^*)$ is positive definite, $G^{\textnormal{ran}}_1(B_H) = {\mathcal G}_2 $
and $A_2 \neq {\mathbf 0}$ or $= {\mathbf 0}$ according to whether $-I \notin {\mathcal G}_1 $ or $-I \in {\mathcal G}_1$, respectively
\end{enumerate}
\end{itemize}
(see Didier and Pipiras \cite{didier:pipiras:2012}, Section 5.1, on how to choose $H$ and $AA^*$). Then, as in the proof of Theorem \ref{t:OFBF_m,2}, $(ii)$, relations \eqref{e:FX(dx)=x^-H_Xi(dx)_x^-H_x^(-1)}, \eqref{e:Gdom(X)=SdomXi} and \eqref{e:Gran(X)=Gran(BH)} ensure that the induced random field $X$ satisfies $(G^{\textnormal{dom}}_1(X),G^{\textnormal{ran}}_1(X)) = ({\mathcal G}_1,{\mathcal G}_2)$. In particular, depending on whether $-I \notin {\mathcal G}_1 $ or $-I \in {\mathcal G}_1$, Lemma \ref{l:Xi(dx)} ensures that the associated measure $\Xi(d \theta)$ in \eqref{e:Xi(dtheta)} has domain symmetry group ${\mathcal G}_1$. Moreover, $X$ is, indeed, proper, and thus an OFBF (with exponents $E = I$ and $H$), which is a consequence of \eqref{e:Lambda(Theta0)_is_proper} and \eqref{e:span=Rm}.

Because, by assumption, ${\mathcal G}_1$ is not (conjugate to) $O(2)$, the list of the possible domain groups displayed in Table \ref{table:OFBF_m=2_n=2_symmgroups} shows that for every $x \neq 0$, the orbit ${\mathcal G}_1x$ consists of finitely many points. Moreover, by Lemma \ref{l:Lambda,Lambda_j_properties} and expression \eqref{e:def_Xi(d_theta)}, the support of the measure $\Xi(d\theta)$ consists of the orbits that enter into the construction of the measure, namely, a finite number of points in $S^{1}$. Therefore, the resulting spectral measure $F_X(dx) = r^{-H}\Xi(d\theta)r^{-H^*}r^{-1}dr$ is singular.\\

To show $(ii)$, it will suffice to take $E = I$. Let $H \in M(2,\bbR)$ be a matrix whose eigenvalues satisfy \eqref{e:maxeigLambda(Theta)>0=>minReeigLambda(Theta)>0}. Consider the OFBF class whose harmonizable representation is
\begin{equation}\label{e:OFBF_from_an_OFBM}
X = \{X(t)\}_{t \in \bbR^m} = \Big \{\int_{\bbR^2} (e^{i \langle t, x \rangle}- 1)\|x\|^{-H_{E}} \Delta^{1/2} \Big( \frac{x}{\|x\|}\Big)\widetilde{B}(dx)
\Big \}_{t \in \bbR^2},
\end{equation}
where $H_E$ is as in \eqref{e:operator_scaling_under_a.c.}, $\Delta^{1/2} \in {\mathcal S}_{\geq 0}(2,\bbC)$ is a Hermitian function whose real parts' maximal and minimal eigenvalues are bounded and bounded away from zero, respectively (cf.\ Remark \ref{r:suff_condition_Delta_is_Hermitian_posdef}). By Theorem 3.1 in Baek et al.\ \cite{baek:didier:pipiras:2014}, the random field \eqref{e:OFBF_from_an_OFBM} is a well-defined OFBF with exponents $(I ,H)$ and spectral density
\begin{equation}\label{e:specdens_OFBF_2,2}
f_X(x) = \|x\|^{-H_{E}} \Delta \Big( \frac{x}{\|x\|} \Big)\|x\|^{-H^*_{E}}.
\end{equation}
Then,
$$
{\Bbb E}X(s)X(t)^* = \int_{\bbR^2} (e^{i \langle s, x \rangle}- 1) (e^{-i \langle t, x \rangle}- 1) \|x\|^{- H_E}\Delta\Big( \frac{x}{\|x\|}\Big)\|x\|^{- H^*_E}dx
$$
$$
= \int^{2 \pi}_{0}\int^{\infty}_{0} (e^{i \langle s, r\theta \rangle}- 1) (e^{-i \langle t, r\theta \rangle}- 1)  r^{- H}\Delta( (\cos \theta , \sin \theta)^* ) r^{-H^*} r^{-1} dr d \theta,
$$
where the equality is a consequence of making a change of variables into (Euclidean) polar coordinates. It will suffice to define the spherical function $\Delta$ appropriately.

In light of Table \ref{table:OFBF_m=2_n=2_symmgroups}, we will break up the construction according to the types of domain symmetry groups, i.e., groups of the form ${\mathcal D}_{\nu}$, ${\mathcal C}_{\nu}$ or $O(2)$ (Cases 1, 2 or 3, respectively; see also \eqref{e:Dv_Cv}).\\

\noindent \textbf{Case 1}: Fix $\nu \geq 1$, and set ${\mathcal G}_1 = {\mathcal D}_{\nu}$. Now pick a range group ${\mathcal G}_2$ that is compatible with ${\mathcal G}_1$ according to \eqref{e:Psi}. In other words, depending on whether $\nu$ is odd or even, then $-I \notin {\mathcal D}_{\nu}$ or $-I \in {\mathcal D}_{\nu}$, respectively. Consider a parameter $H$ and spherical parameters
\begin{equation}\label{e:Delta1D_Delta2D}
\Delta_{1,{\mathcal D}}, \Delta_{2,{\mathcal D}} \in {\mathcal S}_{>0}(2,\bbC)
\end{equation}
(i.e., $\Delta_{\cdot,{\mathcal D}}$ stands for $AA^*$ in \eqref{e:OFBM_main_theorem}) such that
\begin{equation}\label{e:Lambda1D,Lambda2D}
\Delta_{1,{\mathcal D}} =
\left\{\begin{array}{cc}
\Re \Delta_{1,{\mathcal D}} + i \Im \Delta_{1,{\mathcal D}}, & \textnormal{$\nu$ is odd},\\
\Re \Delta_{1,{\mathcal D}}, & \textnormal{$\nu$ is even},
\end{array}\right.
\quad
\Delta_{2,{\mathcal D}} =
\left\{\begin{array}{cc}
\overline{\Delta_{1,{\mathcal D}}}, & \textnormal{$\nu$ is odd},\\
\Re \Delta_{2,{\mathcal D}} \neq \Re \Delta_{1,{\mathcal D}}, & \textnormal{$\nu$ is even},
\end{array}\right.
\end{equation}
and which yield the same group
\begin{equation}\label{e:G(BH)=G2}
G^{\textnormal{ran}}_1(B_H) = {\mathcal G}_2
\end{equation}
as the range symmetry group of two OFBMs of the form \eqref{e:OFBM_main_theorem}, both with $H$ as the Hurst exponent, whereas, for one, $AA^* =  \Delta_{1,{\mathcal D}}$, and for the other, $AA^* =  \Delta_{2,{\mathcal D}}$ (see Didier and Pipiras \cite{didier:pipiras:2012}, Section 5.1, on how to choose $H$ and the spherical parameters). Note that, depending on whether $\nu$ is odd or even, we have $-I \notin G^{\textnormal{dom}}_1$ or $-I \in G^{\textnormal{dom}}_1$, respectively. Since $-I$ corresponds to a $\pi$ rotation, when $\nu$ is even such a rotation must take a slice of the sphere to another slice where it takes the same value, the opposite holding for when $\nu$ is odd (this can be visualized in Figure \ref{f:asymm_sphere}). In addition, when $\nu$ is even, we can always suppose
\begin{equation}\label{e:Delta2=cDelta1}
\Re \Delta_{2,{\mathcal D}} = c \Re \Delta_{1,{\mathcal D}}
\end{equation}
for some $c \in (0,\infty) \backslash \{1\}$ since multiplication by a nonzero constant does not alter the domain symmetry group of an OFBF, and
$$
\Re \Delta_{1,{\mathcal D}}, \Re \Delta_{2,{\mathcal D}} \in {\mathcal S}_{> 0}(2,\bbR).
$$

For $x \in S_0 = S^{1}$ and its angular component $\theta(x)$, define the matrix-valued function $\Delta(x)$ appearing in \eqref{e:OFBF_from_an_OFBM} and \eqref{e:specdens_OFBF_2,2} as
\begin{equation}\label{e:angular_region_Dv}
\Delta( x) = \left\{\begin{array}{cc}
\Delta_{1,{\mathcal D}}, & \theta ( x) \in \frac{2\pi}{\nu}\Big[\frac{1}{4}+(k-1) , \frac{3}{4}+(k-1) \Big);\\
\Delta_{2,{\mathcal D}}, & \theta( x) \in \frac{2\pi}{\nu}\Big[k-1, \frac{1}{4}+(k-1) \Big)\cup \frac{2\pi}{\nu}\Big[\frac{3}{4}+(k-1) , k \Big),\\
\end{array}\right.
\end{equation}
for $k = 1,2,3,\hdots,\nu$. In other words, we can interpret the function $\Delta(\cdot)$ as dividing up the sphere $S^1$ into slices of angular size $\frac{1}{4}\frac{2\pi}{\nu}$, where it takes values $\Delta_{1,{\mathcal D}}$ or $\Delta_{2,{\mathcal D}}$. In particular, each consecutive pair of slices associated with the value $\Delta_{1,{\mathcal D}}$ is followed by a pair associated with the value $\Delta_{2,{\mathcal D}}$ (cf.\ Figure \ref{f:asymm_sphere}, left column). Moreover,
\begin{equation}\label{e:Lambda(Opi_x)}
\Delta(-x) =  \left\{\begin{array}{cc}
\Delta(x) \in {\mathcal S}_{\geq 0}(2,\bbR)   & \nu \textnormal{ is even}; \\
\overline{\Delta(x)}    & \nu \textnormal{ is odd},
\end{array}\right. \quad x \in S^1.
\end{equation}
Therefore, for $\nu \in \bbN$,
\begin{equation}\label{e:Lambda(-x)=Lambda(x)conj}
\Delta(-x) = \overline{\Delta(x)}, \quad x \in S^1.
\end{equation}

We now study the symmetries of the resulting OFBF spectral measure interpreted in terms of polar coordinates as in \eqref{e:OFBM_polar_harmonizable}, where the spherical measure is given by $\Delta(d \theta) = \Delta(x)dx$ for $x \in S^{1}$ and $\Delta(x)$ is defined by \eqref{e:angular_region_Dv}. In regard to range symmetries, \eqref{e:Delta2=cDelta1} implies that for any $\Theta \in \textnormal{supp}\{\Delta(d \theta)\}$ we can write
$$
\Delta (\Theta) = c_{\Theta}\Re \Delta_{1,{\mathcal D}} + i \hspace{1mm}d_{\Theta}\Im \Delta_{1,{\mathcal D}}
$$
for some pair $c_{\Theta}> 0$ and $d_{\Theta} \geq 0$, where $d_{\Theta}$ is $>$ or $= 0$ when $\nu$ is odd or even, respectively. Since
\begin{equation}\label{e:W_Theta=c^(1/2)_Theta*ReDelta1^(1/2)}
W_{\Theta} = \Re(\Delta(\Theta))^{1/2} = c^{1/2}_{\Theta}\Re(\Delta_{1,{\mathcal D}})^{1/2}
\end{equation}
in \eqref{e:W_Theta}, then
$$
W^{-1}_{\Theta}H W_{\Theta} = \Re \Delta_{1,{\mathcal D}}^{-1/2} \hspace{1mm}H \hspace{1mm}\Re \Delta_{1,{\mathcal D}}^{1/2}
$$
and
$$
\Pi_{r,\Theta} = r^{- \Re \Delta_{1,{\mathcal D}}^{-1/2}\hspace{0.5mm}H \hspace{0.5mm}\Re \Delta_{1,{\mathcal D}}^{1/2}}r^{- \Re \Delta_{1,{\mathcal D}}^{1/2}\hspace{0.5mm}H^* \hspace{0.5mm}\Re \Delta_{1,{\mathcal D}}^{-1/2}}, \quad r > 0, \quad \Pi_{I,\Theta} = \Re \Delta_{1,{\mathcal D}}^{-1/2}\hspace{1mm}d_{\Theta}\Im \Delta_{1,{\mathcal D}} \hspace{1mm}\Re \Delta_{1,{\mathcal D}}^{-1/2}.
$$
In particular,
$$
{\mathcal C}_{O(2)}(\Pi_{I,\Theta}) =
\left\{\begin{array}{cc}
SO(2), & \textnormal{if } \Im \Delta_{1,{\mathcal D}} \neq {\mathbf 0} \textnormal{ ($\nu$ is odd)},\\
O(2), & \textnormal{if }\Im \Delta_{1,{\mathcal D}} = {\mathbf 0} \textnormal{ ($\nu$ is even)}.\\
\end{array}\right.
$$
This holds because, for odd $\nu$, by \eqref{e:Delta1D_Delta2D} the matrix $\Im \Delta_{1,{\mathcal D}}$ is skew-symmetric, and thus so is $\Re \Delta_{1,{\mathcal D}}^{-1/2}\Im \Delta_{1,{\mathcal D}}\Re \Delta_{1,{\mathcal D}}^{-1/2}$ (cf.\ Lemma 5.1 in Didier and Pipiras \cite{didier:pipiras:2012}). Therefore, \eqref{e:Gran_structure_n=2} can be rewritten as
\begin{equation}\label{e:G_H,Theta_specdens_Case1}
G_{H,\Theta} = \left\{\begin{array}{cc}
\Re(\Delta_{1,{\mathcal D}})^{1/2} \Big( \bigcap_{r > 0} {\mathcal C}_{O(2)}(\Pi_{r}) \cap SO(2) \Big) \Re(\Delta_{1,{\mathcal D}})^{-1/2}, & \textnormal{$\nu$ is odd},\\
\Re(\Delta_{1,{\mathcal D}})^{1/2} \Big( \bigcap_{r > 0} {\mathcal C}_{O(2)}(\Pi_{r}) \Big) \Re(\Delta_{1,{\mathcal D}})^{-1/2}, & \textnormal{$\nu$ is even},\\
\end{array}\right.
\end{equation}
where $\Pi_{r}$ is the scaling function \eqref{e:Pi-rTheta_Pi-I,Theta} (not dependent on $\Theta$) of an OFBM with parameters $H$ and $AA^* = \Delta_{1,{\mathcal D}}$, namely, $B_H$ in \eqref{e:G(BH)=G2}. From \eqref{e:Gran_structure}, \eqref{e:W_Theta=c^(1/2)_Theta*ReDelta1^(1/2)} and \eqref{e:G_H,Theta_specdens_Case1}, we obtain
$$
G^{\textnormal{ran}}_1(X) = G^{\textnormal{ran}}_1(B_H) = {\mathcal G}_2.
$$

In regard to domain symmetries, by Lemmas \ref{l:G1_via_specmeas} and \ref{l:(m,n)=(2,2)_construction=>group_contained_O(2)}, we know that $G^{\textnormal{dom}}_1(X)^* = {\mathcal S}^{\textnormal{dom}}(F_X) \subseteq O(2)$. We first look at reflection matrices. A matrix $F_{k \frac{2 \pi}{\nu}} \in O(2) \backslash SO(2)$ determines a reflection axis at the angle $\frac{k}{2}\frac{2 \pi}{\nu}$. If $k$ is odd, this angle can be rewritten as
\begin{equation}\label{e:AA*}
\frac{k}{2}\frac{2 \pi}{\nu} = \Big( \frac{k+1}{2}- \frac{1}{2}\Big) \frac{2 \pi}{\nu}, \quad \frac{k+1}{2} \in \bbN,
\end{equation}
In other words, the reflection axis splits a pair of angular slices where $\Delta(x)$ takes the value $\Delta_{1,{\mathcal D}}$. Alternatively, if $k$ is even, then
\begin{equation}\label{e:AA*conj}
\frac{k}{2} \frac{2\pi}{\nu}, \quad \frac{k}{2} \in \bbN.
\end{equation}
In this case, the reflection axis splits a pair of angular slices where $\Delta(x)$ takes the value $\Delta_{2,{\mathcal D}}$. Combined with the fact that $F_X(dx)$ is a Hermitian measure, in view of \eqref{e:Gdom1_in_terms_of_fX} this implies that $F_X(F_{k \frac{2 \pi}{\nu}}dx) = F_X(dx)$, i.e., $F_{k \frac{2 \pi}{\nu}} \in G^{\textnormal{dom}}_1(X)$, $k = 1,\hdots,\nu$. Turning to rotation matrices, it is clear that, by \eqref{e:Gdom1_in_terms_of_fX} and the construction of $F_X(dx)$,
$$
F_X\Big(O_{k \frac{2\pi}{\nu}}dx \Big) = F_X(dx), \quad k = 1,\hdots,\nu,
$$
Hence, we also have $O_{k \frac{2 \pi}{\nu}} \in G^{\textnormal{dom}}_1(X)$, $k = 1,\hdots,\nu$. Moreover, by construction, no other rotation or reflection matrices appear in $G^{\textnormal{dom}}_1(X)$. Therefore, $G^{\textnormal{dom}}_1(X) = {\mathcal D}_{\nu}$. \\

\noindent \textbf{Case 2}: Fix $\nu \geq 1$, and set ${\mathcal G}_1 = {\mathcal C}_{\nu}$. Again pick a range group ${\mathcal G}_2$ that is compatible with ${\mathcal G}_1$ according to \eqref{e:Psi}, i.e., in other words, depending on whether $\nu$ is odd or even, then $-I \notin {\mathcal C}_{\nu}$ or $-I \in {\mathcal C}_{\nu}$, respectively. As in Case 1, by analogy to \eqref{e:Lambda1D,Lambda2D}, we pick an appropriate $H$ and define the matrices $\Delta_{i,{\mathcal C}}$, $i=1,2,3,4$, so that their imaginary parts are zero or not depending on whether $\nu$ is even or odd. More specifically,
$$
\Delta_{1,{\mathcal C}} =
\left\{\begin{array}{cc}
\Re \Delta_{1,{\mathcal C}} + i \Im \Delta_{1,{\mathcal C}}, & \textnormal{$\nu$ is odd},\\
\Re \Delta_{1,{\mathcal C}}, & \textnormal{$\nu$ is even},
\end{array}\right.
\quad
\Delta_{2,{\mathcal C}} =
\left\{\begin{array}{cc}
\Re \Delta_{2,{\mathcal C}} + i \Im \Delta_{2,{\mathcal C}}, & \textnormal{$\nu$ is odd},\\
\Re \Delta_{2,{\mathcal C}}, & \textnormal{$\nu$ is even},
\end{array}\right.
$$
\begin{equation}\label{e:Delta1C,Delta2C,Delta3C,Delta4C}
\Delta_{3,{\mathcal C}} =
\left\{\begin{array}{cc}
\overline{\Delta_{1,{\mathcal C}}}, & \textnormal{$\nu$ is odd},\\
\Re \Delta_{3,{\mathcal C}}, & \textnormal{$\nu$ is even},
\end{array}\right.
\quad
\Delta_{4,{\mathcal C}} =
\left\{\begin{array}{cc}
\overline{\Delta_{2,{\mathcal C}}}, & \textnormal{$\nu$ is odd},\\
\Re \Delta_{4,{\mathcal C}}, & \textnormal{$\nu$ is even}.
\end{array}\right.
\end{equation}
In \eqref{e:Delta1C,Delta2C,Delta3C,Delta4C},
$$
\left\{\begin{array}{cc}
\Re \Delta_{1,{\mathcal C}} =  \Re \Delta_{3,{\mathcal C}} \neq \Re \Delta_{2,{\mathcal C}} =  \Re \Delta_{4,{\mathcal C}}, & \textnormal{if $\nu$ is odd},\\
\Re \Delta_{1,{\mathcal C}}, \Re \Delta_{2,{\mathcal C}}, \Re \Delta_{3,{\mathcal C}}, \Re \Delta_{4,{\mathcal C}} \textnormal{ are pairwise distinct}, & \textnormal{if $\nu$ is even},
\end{array}\right.
$$
and $\Delta_{1,{\mathcal D}} \in M(n,\bbC)$, $i = 1,2,3,4$, correspond to the spherical parameters with positive definite real parts associated with OFBMs \eqref{e:OFBM_main_theorem} displaying the same range symmetry group ${\mathcal G}_2$. For $x \in S^{1}$, define the matrix-valued function
\begin{equation}\label{e:angular_region_Cv}
\Delta( x) = \left\{\begin{array}{cc}
\Delta_{1,{\mathcal D}}, & \theta ( x) \in \frac{2\pi}{\nu}\Big[k-1 , \frac{1}{4}+(k-1) \Big);\\
\Delta_{2,{\mathcal D}}, & \theta( x) \in \frac{2\pi}{\nu}\Big[\frac{1}{4}+(k-1), \frac{1}{4}+(k-1) \Big);\\
\Delta_{3,{\mathcal D}}, & \theta( x) \in \frac{2\pi}{\nu}\Big[\frac{1}{2}+(k-1), \frac{3}{4}+(k-1) \Big);\\
\Delta_{4,{\mathcal D}}, & \theta( x) \in \frac{2\pi}{\nu}\Big[\frac{3}{4}+(k-1), 1+(k-1) \Big),\\
\end{array}\right.
\end{equation}
for $k = 1,2,3,\hdots,\nu$. In other words, we can interpret the function $\Delta(\cdot)$ as dividing up the sphere $S^1$ into slices of angular size $\frac{1}{4}\frac{2\pi}{\nu}$, where it takes values $\Delta_{1,{\mathcal D}}$, $\Delta_{2,{\mathcal D}}$, $\Delta_{3,{\mathcal D}}$ or $\Delta_{4,{\mathcal D}}$ (cf.\ Figure \ref{f:asymm_sphere}, right column).

We now study the symmetries of the spectral measure. In regard to range symmetries, the same type of argument as in Case 1 can be used. In regard to domain symmetries, again by Lemma \ref{l:(m,n)=(2,2)_construction=>group_contained_O(2)}, we know that $G^{\textnormal{dom}}_1(X)^* = {\mathcal S}^{\textnormal{dom}}(F_X) \subseteq O(2)$. Irrespective of whether $\nu$ is odd or even, $F_X(O_{k2\pi/\nu}dx)= F_X(dx)$, $k = 1,\hdots,\nu$. Moreover, the matrix value of $\Delta(x)$ on each of four consecutive slices are, by construction, pairwise distinct. This implies that there is no reflection in the symmetry group of $F_X(dx)$. Moreover, both conditions \eqref{e:Lambda(Opi_x)} and \eqref{e:Lambda(-x)=Lambda(x)conj} hold. Therefore, $G^{\dom}_1(X) = {\mathcal G}_1$.\\

\noindent \textbf{Case 3}: It remains to consider the domain symmetry group $O(2)$. So, pick a compatible range symmetry group ${\mathcal G}_2$ and consider the spectral density \eqref{e:specdens_OFBF_2,2} with
\begin{equation}\label{e:Delta_associated_with_O(2)}
\Delta((\cos\theta, \sin \theta)^*) \equiv \Delta.
\end{equation}
In \eqref{e:Delta_associated_with_O(2)} and \eqref{e:specdens_OFBF_2,2}, $\Delta \in {\mathcal S}_{\geq 0}(n,\bbC)$ is chosen so that $\Re \Delta$ is symmetric positive definite and $\Delta$ and $H$ correspond to the spectral parametrization of an OFBM \eqref{e:OFBM_main_theorem} with (range) symmetry group ${\mathcal G}_2$. $\Box$\\
\end{proof}

\begin{figure}[h]
\centerline{\includegraphics[height=2in,width=2.8in]{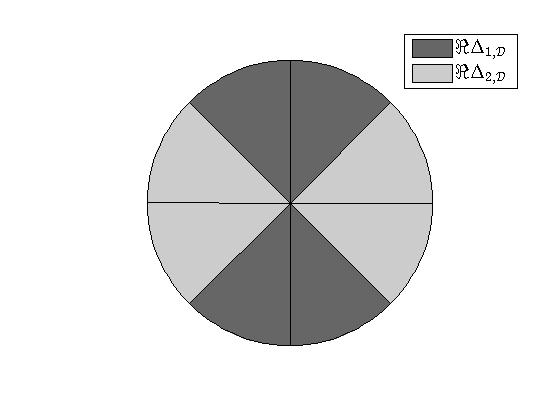}
\includegraphics[height=2in,width=2.7in]{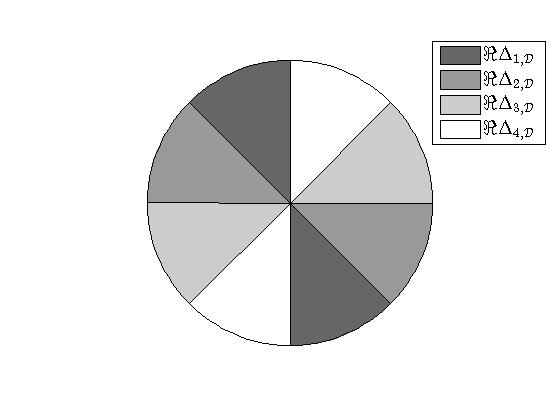}}
	\centerline{\includegraphics[height=2in,width=2.8in]{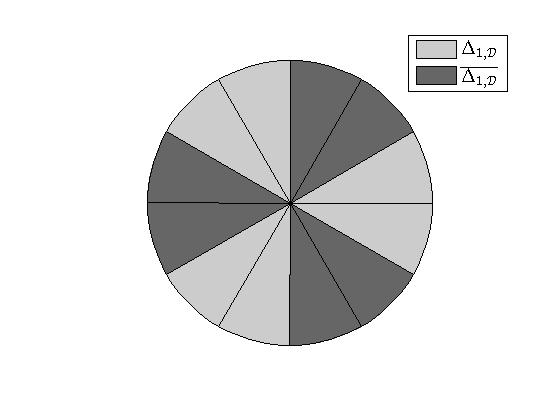}
	\includegraphics[height=2in,width=2.7in]{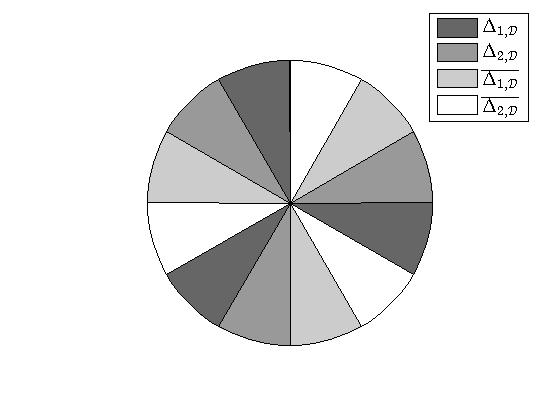}}
	\caption{\label{f:asymm_sphere} \textbf{The spectral density of anisotropic OFBFs with Euclidean spherical component} ($G^{\dom}_1$). The shading of each arc is extended to the corresponding disk slice for ease of visualization. From left to right: top row, ${\mathcal D}_2$ and ${\mathcal C}_2$; bottom row, ${\mathcal D}_3$ and ${\mathcal C}_3$. In the panels for ${\mathcal D}_2$ and ${\mathcal D}_3$, the reflection axes appear as black lines splitting a slice of a given shade.}
\end{figure}

The following examples illustrate the study of the structures of range and domain symmetry groups provided in Propositions \ref{p:Gran_structure} and \ref{p:Gdom1=>maximality} (as well as in Theorem \ref{t:OFBF_m,2} and Proposition \ref{p:OFBF_m,n=2,2}). The first one is taken from Didier et al.\ \cite{didier:meerschaert:pipiras:2016:exponents}; in this case, the domain and range symmetry groups can be obtained based on a direct computation. In the second example, we make use of the construction in the proof of Proposition \ref{p:OFBF_m,n=2,2}. An application to the problem of the identifiability of the exponents of OFBF is given in Example \ref{ex:sets_of_exponents} below.
\begin{example}\label{ex:Gdom1=O(2)_Gran1=O(2)}
Let $X = \{X(t)\}_{t \in \bbR^2}$ be an $\bbR^2$-valued OFBF with spectral density $f_X(x) = \|x\|^{-\gamma} I $, $x \in \bbR^2 \backslash \{0\}$, $2 < \gamma < 4$, where $\|\cdot\|$ denotes the Euclidean norm and $I$ is the identity matrix. This means that its covariance function can be written as
\begin{equation}\label{e:OFBF}
\Gamma(s,t) = I \int_{\bbR^2} (e^{i \langle s,x\rangle}-1)(e^{-i \langle t,x\rangle}-1)\frac{1}{\|x\|^{\gamma}}\hspace{1mm} dx,
\end{equation}
where $\langle \cdot, \cdot \rangle$ is the Euclidean inner product. By \eqref{e:OFBF} and a change of variables, $X$ is $(E,H)$-o.s.s.\ with $E = I$, $H = h I$, where $h = (\gamma-2)/2$. Since $\Gamma(s,t)$ is a scalar matrix (i.e., a scalar times the identity) for $s,t \in \bbR^2$, then the condition
\begin{equation}\label{e:AGamma(s,t)A*=Gamma(s,t)}
A\Gamma(s,t)A^* = \Gamma(s,t)
\end{equation}
for $A \in GL(2,\bbR)$ implies that $AA^* = I$, namely, $A \in O(2)$. Moreover, any $A \in O(2)$ satisfies \eqref{e:AGamma(s,t)A*=Gamma(s,t)}. Hence, $G^{\textnormal{ran}}_1(X) = O(2)$. Now note that, by a change of variables in \eqref{e:OFBF} and the continuity of the spectral density except at zero, $A \in G^{\textnormal{dom}}_1(X) \Leftrightarrow \|A^* x\| = \|x\|$, $x \in \bbR^{m}\backslash\{0\}$, i.e., $A \in O(2)$. As a consequence, $G^{\textnormal{dom}}_1(X) = O(2)$.
\end{example}

\begin{example}\label{ex:Gdom1=D3_Gran1=SO(2)}
Let $X = \{X(t)\}_{t \in \bbR^2}$ be an $\bbR^2$-valued OFBF with spectral density
$$
f_X(x) = \|x\|^{-H_{E}} \Delta \Big( \frac{x}{\|x\|} \Big)\|x\|^{-H^*_{E}}
$$
(see \eqref{e:H_E}, \eqref{e:OFBF_from_an_OFBM} and \eqref{e:specdens_OFBF_2,2}). For the sake of illustration, we look at a subcase, namely, we want to construct an OFBF $X$ with symmetry groups
\begin{equation}\label{e:choice_groups_example}
G^{\textnormal{dom}}_1(X) = {\mathcal D}_3, \quad G^{\textnormal{ran}}_1(X) = SO(2).
\end{equation}
So, choose the parameters $H$, $\Delta_{1,{\mathcal D}}$, $\Delta_{2,{\mathcal D}}$ such that
$$
\Delta_{1,{\mathcal D}} = \Re \Delta_{1,{\mathcal D}} + i \Im \Delta_{1,{\mathcal D}}, \quad \Delta_{2,{\mathcal D}} = \overline{\Delta_{1,{\mathcal D}} }.
$$
where $(H,\Re \Delta_{1,{\mathcal D}},\Im \Delta_{1,{\mathcal D}})$ corresponds to the parametrization \eqref{e:(H,ReAA*,ImAA*)} of an OFBM with range symmetry group $SO(2)$. In particular, $\Im \Delta_{1,{\mathcal D}} \neq {\boldsymbol 0}$. The function $\Delta(\cdot)$ then breaks up the sphere $S^1$ into slices of angular size $\frac{1}{4}\frac{2\pi}{3}$, where it takes values $\Delta_{1,{\mathcal D}}$ or $\Delta_{2,{\mathcal D}}$. This is depicted in Figure \ref{f:asymm_sphere}, bottom left panel. A detailed justification of why \eqref{e:choice_groups_example} holds is provided in the proof of Proposition \ref{p:OFBF_m,n=2,2}. Intuitively, since the spectral density $f_X(x)$ of the OFBF coincides, in every direction, with that of an OFBM with (range) symmetry group $SO(2)$, then $G^{\textnormal{ran}}_1(X) = SO(2)$. Moreover, of all the possible domain groups in Table \ref{table:OFBF_m=2_n=2_symmgroups}, only the application of ${\mathcal D}_3$ leaves the sphere in Figure \ref{f:asymm_sphere} unaltered. Therefore, $G^{\textnormal{dom}}_1(X) = {\mathcal D}_3$.
\end{example}

\begin{remark}\label{r:difficulties_in_extending}
The description of all pairs of symmetry groups in general dimension $(m,n)$, $m,n \in \bbN$, remains an open problem. In regard to range symmetries, solving commutativity relations of the type involved in \eqref{e:Gran_structure} is algebraically intense in dimension $n \geq 3$ (cf.\ Didier and Pipiras \cite{didier:pipiras:2012}). Remark \ref{r:-I_on_mathcalG} below describes the technical difficulties surrounding the construction of a spectral measure for general $m \in \bbN$ when the domain symmetries include $-I$.
\end{remark}

\subsection{Applications}

In this section, we provide two applications of the analysis in the preceding sections: one is a parametric characterization of isotropic OFBF, and the other is the set of exponents of OFBF in dimension $(m,n) = (2,2)$. Throughout this section, ${\mathcal E}^{\textnormal{dom}}_{H}(X)$ denotes the set of domain exponents given some range exponent $H$, and likewise, ${\mathcal E}^{\textnormal{ran}}_{E}(X)$ denotes the set of range exponents given some domain exponent $E$ (see also Didier et al.\ \cite{didier:meerschaert:pipiras:2016:exponents}).

\subsubsection{On the parametric characterization of isotropy}\label{s:isotropy}

Recall that a random field $X = \{X(t)\}_{t \in \bbR^m}$ is called \textit{isotropic} when its law is invariant under orthogonal transformations, namely,
\begin{equation}\label{e:def_isotropy}
\{ X(Ot)\}_{t \in \bbR^m} \stackrel{{\mathcal L}}= \{ X(t) \}_{t \in \bbR^m}, \quad O \in O(m).
\end{equation}
In other words, $G^{\textnormal{dom}}_1(X) = O(m)$. The existence of a commuting domain exponent of the form $E_0 = \eta I$ is not generally sufficient for isotropy. For example, an OFBM, for which the domain exponent is just a scalar, may not be time-reversible (isotropic; see Didier and Pipiras \cite{didier:pipiras:2011}, Theorem 6.1). The inequivalence between isotropy and Euclidean spherical coordinates is further illustrated in Figure \ref{f:asymm_sphere}, which depicts the Fourier spectrum of anisotropic OFBFs with Euclidean spherical components. In fact, in the next proposition we show that, even though a scalar matrix-valued domain exponent is a necessary condition for isotropy, sufficiency is only attained in the presence of the spherical symmetry of the measure $\Delta(d \theta)$ on the Euclidean sphere.

\begin{proposition}\label{p:isotropy_m,n_case}
Let $X = \{X(t)\}_{t \in \bbR^m}$ be an $\bbR^n$--valued OFBF with exponents $(E,H)$. Suppose $X$ satisfies the condition \eqref{e:minReeig(H)=<maxReeig(H)<minReeig(E*)}, and recall that $\| \cdot \|$ denotes the Euclidean norm. Then, $X$ is isotropic if and only if the following two conditions hold:
\begin{itemize}
\item [(i)] there exists $\eta > 0$ such that $E_0 = \eta I \in {\mathcal E}^{\textnormal{dom}}_H(X)$;
\item [(ii)] based on the norm $\| \cdot\|_0$ induced by $E_0$ via the relation \eqref{e:E0_norm} for $\|\cdot\|$,
\begin{equation}\label{e:isotropy_at_sphere}
\Delta(d \theta) = \Delta(O d \theta), \quad O \in O(m), \quad S_0 = c^{-1}_0 S^{m-1},
\end{equation}
\end{itemize}
for some $c_0 > 0$, where $\Delta(d \theta)$ is the spherical measure in \eqref{e:int-rep-spectral-F}. Moreover, if $X$ is isotropic, its spectral measure has a density $f_X(x)= \frac{F_X(dx)}{dx}$.
\end{proposition}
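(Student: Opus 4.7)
The plan rests on Lemma \ref{l:G1_via_specmeas}, which translates the isotropy condition $G^{\textnormal{dom}}_1(X) = O(m)$ into the spectral invariance ${\mathcal S}^{\textnormal{dom}}(F_X) = O(m)$, combined with the polar-harmonizable representation \eqref{e:change_of_variables_into_polar_coordinates} of Proposition \ref{p:akin_to_Prop2.3_Bierme_etal2007}.

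For the \emph{if} direction, assume (i) and (ii). Direct evaluation of \eqref{e:E0_norm} with $E = \eta I$ yields $\|x\|_0 = \|x\|/\eta$, confirming that $S_0 = \eta S^{m-1}$ is Euclidean (with $c_0 = 1/\eta$). For any $O \in O(m)$ and any Borel set $B \subseteq \bbR^m$, I would substitute $\theta \mapsto O^{-1}\theta$ (an isometric bijection of $S_0$) in \eqref{e:change_of_variables_into_polar_coordinates} and invoke (ii) to obtain $F_X(OB) = F_X(B)$, so $O(m) \subseteq {\mathcal S}^{\textnormal{dom}}(F_X)$. Combined with the general fact that $G^{\textnormal{dom}}_1(X)$ is (up to conjugacy) a subgroup of $O(m)$, this gives $G^{\textnormal{dom}}_1(X) = O(m)$.

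For the \emph{only if} direction, assume isotropy. The crucial step is producing a scalar-matrix domain exponent: a short calculation with \eqref{e:o.s.s.} shows that for every $O \in G^{\textnormal{dom}}_1(X) = O(m)$ and every $E \in {\mathcal E}^{\textnormal{dom}}_H(X)$ one has $OEO^{-1} \in {\mathcal E}^{\textnormal{dom}}_H(X)$. Invoking the affine-space structure of ${\mathcal E}^{\textnormal{dom}}_H(X)$ established in Didier et al.\ \cite{didier:meerschaert:pipiras:2016:exponents}, the Haar average $E_0 := \int_{O(m)} OEO^{-1}\,\mathbf{H}(dO)$ remains in ${\mathcal E}^{\textnormal{dom}}_H(X)$ and commutes with every element of $O(m)$. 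By Schur's lemma applied to the absolutely irreducible standard representation of $O(m)$ on $\bbR^m$, $E_0$ must be a real scalar multiple of the identity, and $\tr(E_0) = \tr(E) > 0$ forces $E_0 = \eta I$ with $\eta > 0$, giving (i). Since $\|\cdot\|_0$ is then proportional to $\|\cdot\|$ and $S_0$ is a Euclidean sphere, the identity $F_X(O\,\cdot) = F_X(\cdot)$ combined with the uniqueness of the polar representation in Proposition \ref{p:akin_to_Prop2.3_Bierme_etal2007} (applied entry-wise to the real and imaginary parts of $\Delta$) yields $\Delta(O d\theta) = \Delta(d\theta)$ for all $O \in O(m)$, which is (ii).

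For the density claim, note that under (i) and (ii), $\Delta$ is a finite Hermitian $O(m)$-invariant measure on the Euclidean sphere $S_0$; by the uniqueness of rotation-invariant measures on $S^{m-1}$ (applied entry-wise), $\Delta(d\theta) = K\sigma(d\theta)$ for some constant $K \in {\mathcal S}_{\geq 0}(n,\bbC)$, where $\sigma$ is the surface measure on $S_0$. Substituting into \eqref{e:change_of_variables_into_polar_coordinates} and passing to Cartesian coordinates produces an absolutely continuous $F_X$ with density of the form $\|x\|^{-H-(m/2)I} K \|x\|^{-H^* -(m/2)I}$ up to a positive constant. The main obstacle is the scalar-matrix exponent step in the necessity direction: once the affine structure of ${\mathcal E}^{\textnormal{dom}}_H(X)$ is available, Haar averaging plus Schur over $\bbR$ seals the argument, after which (ii) and the density follow from routine manipulations of the polar representation.
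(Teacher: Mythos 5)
Your proof is correct and follows essentially the same route as the paper: scalar domain exponent via a commuting-exponent argument, the Euclidean form of $\|\cdot\|_0$, invariance of $\Delta$ via a change of variables in the polar-harmonizable representation, and uniqueness of the rotation-invariant spherical measure for the density claim. The only difference is that where the paper simply cites Theorem 2.6 of Didier et al.\ \cite{didier:meerschaert:pipiras:2016:exponents} for the existence of a commuting exponent, you re-derive it via Haar averaging over $O(m)$ and Schur's lemma, which is a valid (and self-contained) substitute.
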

\begin{proof} Suppose $X$ is isotropic. By Theorem 2.6 in Didier et al.\ \cite{didier:meerschaert:pipiras:2016:exponents}, there exists an exponent $E_0$ that commutes with $G^{\textnormal{dom}}_1(X)$. Because the domain symmetry group is the full orthogonal group $O(m)$, the exponent has the form $E_0 = \eta I$, $\eta > 0$. This, in turn, yields $\|\cdot\|_{0}$ based on the Euclidean norm via \eqref{e:E0_norm}, i.e., $\|x\|_0 = c_0 \|x\|$ for some $c_0 > 0$. From \eqref{e:S0}, we obtain
\begin{equation}\label{e:l(x)_when_E0=eta*I}
l(x) = \frac{x}{c_0 \norm{x}}.
\end{equation}
Since \eqref{e:Phi_change-of-variables} is a homeomorphism,
\begin{equation}\label{e:tau(x)_when_E0=eta*I}
\tau(x) = (c_0\norm{x})^{1/\eta}.
\end{equation}
Under \eqref{e:l(x)_when_E0=eta*I} and \eqref{e:tau(x)_when_E0=eta*I}, the relation \eqref{e:int-rep-spectral-F} holds with $E = \eta I$ and the induced measure $\Delta(d \theta)$.
Moreover, let $O \in O(m)$. By isotropy and a change of variables $O^*\theta = \theta'$,
$$
{\Bbb E}X(s)X(t)^* = \int^{\infty}_0 \int_{S_0} (e^{i \langle s, r^{\eta I} O^* \theta \rangle}-1)(e^{-i \langle t, r^{\eta I} O^* \theta \rangle}-1)r^{-H} \Delta(d \theta) r^{-H^*}r^{-1}dr
$$
$$
= \int^{\infty}_0 \int_{S_0} (e^{i \langle s, r^{\eta I} \theta ' \rangle}-1)(e^{-i \langle t, r^{\eta I}\theta ' \rangle}-1)r^{-H} \Delta(O d \theta ' ) r^{-H^*}r^{-1}dr.
$$
This gives the equality of measures $r^{-H} \Delta(d \theta) r^{-H^*}r^{-1} = r^{-H} \Delta(O^* d \theta ' ) r^{-H^*}r^{-1}$, $r > 0$. Hence, \eqref{e:isotropy_at_sphere} holds. The converse, i.e., ${\Bbb E}X(Os)X(Ot)^* = {\Bbb E}X(s)X(t)^*$, $s,t \in \bbR^m$, $O \in O(m)$, can be established in the same fashion by means of \eqref{e:int-rep-spectral-F}.

Now note that \eqref{e:isotropy_at_sphere} implies that the measure $\Delta(d \theta)$ is uniform on $c^{-1}_0 S^{m-1}$. In view of the polar representation \eqref{e:change_of_variables_into_polar_coordinates}, this yields the absolute continuity of $F_X(dx)$. $\Box$
\end{proof}
\begin{remark}
It is well known that the covariance function
\begin{equation}\label{e:fBm_cov}
{\Bbb E}X(s)X(t) = \frac{\sigma^2}{2}\{|t|^{2H} + |s|^{2H} - |t-s|^{2H}\}, \quad s, t \in \bbR, \quad 0 < H \leq 1,
\end{equation}
characterizes the univariate FBM. The equivalence between the covariance function and a closed-form formula such as \eqref{e:fBm_cov} breaks down in case of vector processes. Generally speaking, and assuming that $X(0) = 0$ a.s., the stationarity of the increments of OFBF leads to the expression
$$
{\Bbb E}X(t)X(s)^* + {\Bbb E}X(s)X(t)^* = {\Bbb E}X(t)X(t)^* + {\Bbb E}X(s)X(s)^* - {\Bbb E}X(t-s)X(t-s)^*.
$$
If
\begin{equation}\label{e:EX(t)X(s)*=EX(s)X(t)*}
{\Bbb E}X(t)X(s)^* = {\Bbb E}X(s)X(t)^*, \quad s,t \in \bbR^m,
\end{equation}
then operator self-similarity based on exponents $(E,H)$ yields
$$
{\Bbb E}X(s)X(t)^* = \frac{1}{2} \Big\{\tau(t)^{H}{\Bbb E}X(l(t))X(l(t))^* \tau(t)^{H^*}+ \tau(s)^{H}{\Bbb E}X(l(s))X(l(s))^* \tau(s)^{H^*}
$$
\begin{equation}\label{e:EX(s)X(t)*_OFBF_closed_formula_after_change-of-variables}
- \tau(t-s)^{H}{\Bbb E}X(l(t-s))X(l(t-s))^* \tau(t-s)^{H^*} \Big\}, \quad s,t \in \bbR^m
\end{equation}
(cf.\ the relations (4.6) and (4.7) in Bierm\'{e} et al.\ \cite{bierme:meerschaert:scheffler:2007}, p.\ 325). Conversely, starting from \eqref{e:OFBF_harmonizable_representation} and by making use of the fact that ${\Bbb E}X(-t)X(-t)^* = {\Bbb E}X(t)X(t)^*$, $t \in \bbR^m$, we can see that \eqref{e:EX(s)X(t)*_OFBF_closed_formula_after_change-of-variables} implies \eqref{e:EX(t)X(s)*=EX(s)X(t)*}. Under the assumption \eqref{e:minReeig(H)=<maxReeig(H)<minReeig(E*)}, the polar-harmonizable representation \eqref{e:int-rep-spectral-F} can be used to extend this statement. In other words, the relation \eqref{e:EX(t)X(s)*=EX(s)X(t)*}, the existence of the closed form formula \eqref{e:EX(s)X(t)*_OFBF_closed_formula_after_change-of-variables} and the relation $\Delta(d\theta) = \overline{\Delta(d\theta)}$ are all equivalent. Furthermore, by a simple adaptation of the argument in Didier and Pipiras \cite{didier:pipiras:2011}, Proposition 5.1, these relations can in turn be shown to be equivalent to ${\Bbb E}X(-s)X(-t)^* = {\Bbb E}X(s)X(t)^*$, $s, t \in \bbR^m$.
\end{remark}

\subsubsection{On the identifiability of OFBF}\label{s:identifiability}
For an OFBF $X$ with exponents $E$ and $H$, one of or both its exponents may be non-identifiable, i.e., its sets of domain or range exponents may comprise more than one element. As a consequence of Didier et al.\ \cite{didier:meerschaert:pipiras:2016:exponents}, Theorems 2.4 and 2.5, if $X$ satisfies \eqref{e:minReeig(H)=<maxReeig(H)<minReeig(E*)} and \eqref{e:maxeigLambda(Theta)>0=>minReeigLambda(Theta)>0} we can write
\begin{equation}\label{e:didier:meerschaert:pipiras:2016:exponents}
{\mathcal E}^{\textnormal{dom}}_{H}(X) =  E + T(G^{\textnormal{dom}}_1(X)), \quad {\mathcal E}^{\textnormal{ran}}_{E}(X) = H + T(G^{\textnormal{ran}}_1(X)),
\end{equation}
where, for any closed group ${\mathcal G}$ such as $G^{\textnormal{dom}}_1(X)$ or $G^{\textnormal{ran}}_1(X)$, we define its tangent space by
\begin{equation}\label{e:T(G)}
T({\mathcal G}) = \Big\{A \in M(n,\bbR): A = \lim_{n \rightarrow \infty} \frac{G_n - I}{d_n}, \quad
\textnormal{for some } \{G_n\} \subseteq {\mathcal G} \textnormal{ and some }
0 \neq d_n \rightarrow 0 \Big\}.
\end{equation}
The following result is a consequence of \eqref{e:didier:meerschaert:pipiras:2016:exponents} and Theorem \ref{t:OFBF_m,2}, and of the fact that $T(O(2)) = T(SO(2)) = so(2)$, where $so(2)$ is the space of $2 \times 2$ skew-symmetric matrices.
\begin{corollary}\label{c:exponents}
Let $X$ be an OFBF in dimension $(m,n) = (2,2)$ with exponents $E$ and $H$, and satisfying the conditions \eqref{e:minReeig(H)=<maxReeig(H)<minReeig(E*)} and \eqref{e:maxeigLambda(Theta)>0=>minReeigLambda(Theta)>0}. Then, the sets of exponents of $X$ are given by, respectively,
$$
{\mathcal E}^{\textnormal{dom}}_{H}(X) =  \left\{\begin{array}{cc}
E + W_{\textnormal{dom}} so(2) W^{-1}_{\textnormal{dom}}, & G^{\textnormal{dom}}_1(X) \cong O(2);\\
E, & G^{\textnormal{dom}}_1(X) \ncong O(2),
\end{array}\right.
$$
$$
{\mathcal E}^{\textnormal{ran}}_{E}(X) =  \left\{\begin{array}{cc}
H + W_{\textnormal{ran}} so(2) W^{-1}_{\textnormal{ran}}, & G^{\textnormal{ran}}_1(X) \cong SO(2) \textnormal{ or }O(2);\\
H, & G^{\textnormal{ran}}_1(X) \ncong SO(2) \textnormal{ or }O(2),
\end{array}\right.
$$
for a pair of matrices $W_{\textnormal{dom}}, W_{\textnormal{ran}} \in {\mathcal S}_{>0}(2,\bbR)$.
\end{corollary}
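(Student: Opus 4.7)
The plan is to combine the general parametrization \eqref{e:didier:meerschaert:pipiras:2016:exponents} of the exponent sets with the enumeration of possible symmetry groups in dimension $(m,n)=(2,2)$ given by Theorem \ref{t:OFBF_m,2} and Corollary \ref{c:Gdom1,Gran1_dim_m=2,n=2}. By \eqref{e:didier:meerschaert:pipiras:2016:exponents}, one has ${\mathcal E}^{\textnormal{dom}}_{H}(X) = E + T(G^{\textnormal{dom}}_1(X))$ and ${\mathcal E}^{\textnormal{ran}}_{E}(X) = H + T(G^{\textnormal{ran}}_1(X))$, so the problem reduces to computing the tangent space \eqref{e:T(G)} of each possible symmetry group appearing in Table \ref{table:OFBF_m=2_n=2_symmgroups}.

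First, I would record the conjugation rule: for any compact subgroup written as ${\mathcal G} = W {\mathcal O} W^{-1}$ with ${\mathcal O} \subseteq O(k)$ and $W \in {\mathcal S}_{>0}(k,\bbR)$ as in \eqref{e:G=WOW^(-1)}, the identity
$$
T({\mathcal G}) = W\hspace{0.5mm} T({\mathcal O})\hspace{0.5mm} W^{-1}
$$
holds. This follows directly from the definition \eqref{e:T(G)}: if $G_n = W O_n W^{-1} \in {\mathcal G}$ and $(G_n - I)/d_n \to A$, then $(O_n - I)/d_n \to W^{-1} A W$, placing $W^{-1}AW$ in $T({\mathcal O})$; the argument reverses. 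This observation is what produces the conjugacies $W_{\textnormal{dom}}, W_{\textnormal{ran}} \in {\mathcal S}_{>0}(2,\bbR)$ appearing in the statement.

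Second, I would compute $T({\mathcal O})$ case by case for each orthogonal subgroup in Table \ref{table:OFBF_m=2_n=2_symmgroups}. The subgroups ${\mathcal C}_\nu$, ${\mathcal D}_\nu$ and ${\mathcal D}^*_1$ are finite and hence discrete, so no sequence $\{G_n\} \subseteq {\mathcal G}$ distinct from $I$ can converge to $I$; the tangent space is consequently $\{\mathbf{0}\}$, which yields the trivial cases ${\mathcal E}^{\textnormal{dom}}_{H}(X) = \{E\}$ and ${\mathcal E}^{\textnormal{ran}}_{E}(X) = \{H\}$. For the remaining cases, $SO(2)$ is the identity component of $O(2)$, so any sequence converging to $I$ eventually lies in $SO(2)$; hence $T(O(2)) = T(SO(2))$. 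Parametrizing $O_{\theta_n} = \exp(\theta_n J)$ with $J$ the standard generator of $so(2)$ and taking $\theta_n \to 0$ yields $(O_{\theta_n} - I)/\theta_n \to J$, and standard arguments based on the exponential map identify the full tangent cone with the Lie algebra $so(2)$ of $2\times 2$ skew-symmetric matrices. Substituting into the conjugation rule from the previous step produces the formulas claimed in the statement.

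The bulk of the work is already embedded in the references \eqref{e:didier:meerschaert:pipiras:2016:exponents}, \eqref{e:T(G)} and in Corollary \ref{c:Gdom1,Gran1_dim_m=2,n=2}, so there is no genuine technical obstacle; the main task is organizational, namely matching each row of Table \ref{table:OFBF_m=2_n=2_symmgroups} to its tangent space and carrying the conjugacy $W_{\textnormal{dom}}$ or $W_{\textnormal{ran}}$ correctly through the computation.
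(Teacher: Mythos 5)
Your proposal is correct and follows essentially the same route as the paper, which derives the corollary directly from \eqref{e:didier:meerschaert:pipiras:2016:exponents}, the classification of symmetry groups in Table \ref{table:OFBF_m=2_n=2_symmgroups}, and the fact that $T(O(2))=T(SO(2))=so(2)$ while finite subgroups have trivial tangent space. Your explicit verification of the conjugation identity $T(W{\mathcal O}W^{-1})=W\,T({\mathcal O})\,W^{-1}$ is a detail the paper leaves implicit, but it is the same argument.
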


\begin{example}\label{ex:sets_of_exponents}
In Example \ref{ex:Gdom1=O(2)_Gran1=O(2)}, since $G^{\textnormal{dom}}_1(X) = O(2)=G^{\textnormal{ran}}_1(X)$, then
$$
{\mathcal E}^{\textnormal{ran}}_I (X) = h I + so(2) , \quad {\mathcal E}^{\textnormal{dom}}_{hI}(X) = I + so(2).
$$
In Example \ref{ex:Gdom1=D3_Gran1=SO(2)},
since $G^{\textnormal{dom}}_1(X) = {\mathcal D}_3$ and $G^{\textnormal{ran}}_1(X) = SO(2)$, then
$$
{\mathcal E}^{\textnormal{ran}}_I (X) = h I + so(2) , \quad {\mathcal E}^{\textnormal{dom}}_{hI}(X) = I.
$$
\end{example}

\appendix

\section{Auxiliary results}

The following lemma is used in the proof of Corollary \ref{c:range_groups_dim2}.
\begin{lemma}\label{l:W1=wW2}
Let $W_1, W_2 \in {\mathcal S}_{> 0}(2,\bbR)$. Also, let $O_1,O_2 \in O(2) \backslash {\mathcal C}_2$. If
\begin{equation}\label{e:W1O1W1inv=W2O2W2inv}
W_1 O_1 W^{-1}_1 = W_2 O_2 W^{-1}_2,
\end{equation}
then for some $w > 0$,
\begin{equation}
W_1 = w W_2 \quad \textnormal{and}\quad O_1 = O_2.
\end{equation}
\end{lemma}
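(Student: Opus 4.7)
I would set $Q := W_1 O_1 W_1^{-1} = W_2 O_2 W_2^{-1}$ and exploit the interplay between the symmetry of the $W_i$ and the orthogonality of the $O_i$, followed by an analysis of the real centralizer of $Q$. Since conjugation preserves the characteristic polynomial, $O_1$ and $O_2$ share the same unordered pair of eigenvalues. Because $O_i \in O(2)\backslash {\mathcal C}_2$, this pair is either $\{e^{\pm i\theta}\}$ with $\theta \in (0,\pi) \cup (\pi,2\pi)$ (nontrivial rotations) or $\{+1,-1\}$ (reflections); in both cases the two eigenvalues are distinct. The rotation subcase is the one actually invoked in the proof of Corollary~\ref{c:range_groups_dim2}, and I would carry out the argument in detail there.

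Next, I would derive an intertwining relation using $W_i = W_i^T$ and $O_i^{-1} = O_i^T$: computing $Q^T = W_1^{-1} O_1^{-1} W_1$ rearranges to $W_1^2 Q^T = Q^{-1} W_1^2$, and the same identity holds with $W_2$ in place of $W_1$. Combining the two gives $W_2^{-2} W_1^2 Q^T = Q^T W_2^{-2} W_1^2$, and transposing yields that $K := W_1^2 W_2^{-2}$ commutes with $Q$. Moreover $W_2^{-1} K W_2 = W_2^{-1} W_1^2 W_2^{-1}$ is symmetric positive definite, so $K$ is similar to a positive definite symmetric matrix and hence has two positive real eigenvalues.

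Because the eigenvalues of $Q$ are distinct, its real centralizer in $M(2,\bbR)$ is the two-dimensional algebra $\{\alpha I + \beta Q : \alpha, \beta \in \bbR\}$. Writing $K = \alpha I + \beta Q$, the eigenvalues of $K$ are $\alpha + \beta e^{\pm i \theta}$; requiring these to be real forces $\beta \sin\theta = 0$, and since $\sin\theta \neq 0$ in the rotation subcase we obtain $\beta = 0$. Positivity then forces $\alpha > 0$, so $W_1^2 = \alpha W_2^2$. Uniqueness of the positive definite square root on ${\mathcal S}_{>0}(2,\bbR)$ gives $W_1 = \sqrt{\alpha}\, W_2 =: w W_2$; substituting back into the defining relation yields $W_2 O_1 W_2^{-1} = W_2 O_2 W_2^{-1}$, whence $O_1 = O_2$.

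The main obstacle lies in the reflection subcase, where $Q$ has the real eigenvalues $\pm 1$: the eigenvalues of $K = \alpha I + \beta Q$ are then $\alpha \pm \beta$, so positivity gives only $\alpha > |\beta|$ and does not force $\beta = 0$. In the intersections that enter Corollary~\ref{c:range_groups_dim2}, however, the presence of a nontrivial rotation (as opposed to merely a reflection) in $W_\Theta O(2) W_\Theta^{-1} \cap W_{\Theta'} O(2) W_{\Theta'}^{-1}$ is exactly what triggers the scalar relation $W_\Theta = w W_{\Theta'}$, so the rotation-case argument above is what is actually needed for the applications.
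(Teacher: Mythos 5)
Your rotation-case argument is correct and it reaches the same endgame as the paper's proof (a centralizer computation for a $2\times 2$ matrix with distinct eigenvalues), but by a genuinely different and arguably cleaner route. You extract the commuting matrix $K=W_1^2W_2^{-2}$ directly from the transpose identities $W_i^2Q^T=Q^{-1}W_i^2$, whereas the paper first establishes $O_2=AO_1A^*$ for an explicit $A\in O(2)$ (namely $\textnormal{diag}(1,-1)$ in the rotation case and a rotation of the eigenvectors in the reflection case), then shows that $A^*W_2^{-1}W_1$ commutes with $O_1$, writes it as $U\,\textnormal{diag}(\eta,\overline{\eta})\,U^*$, and deduces $A^*W_2^{-1}W_1^2W_2^{-1}A=|\eta|^2I$. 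Your version avoids the case analysis needed to produce $A$, and replaces the simultaneous-diagonalization step by the elementary fact that the commutant of $Q$ is $\{\alpha I+\beta Q\}$; the positivity of the eigenvalues of $K$ (via similarity to $W_2^{-1}W_1^2W_2^{-1}$) then forces $\beta=0$ and delivers $W_1=\sqrt{\alpha}\,W_2$.

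Your caveat about the reflection subcase is not a weakness of your write-up but a genuine feature of the statement: for reflections the lemma as written is actually false. Take $W_1=I$, $W_2=\textnormal{diag}(2,1)$ and $O_1=O_2=\textnormal{diag}(1,-1)\in O(2)\backslash SO(2)$; then both sides of \eqref{e:W1O1W1inv=W2O2W2inv} equal $\textnormal{diag}(1,-1)$, yet $W_1$ is not a positive multiple of $W_2$. The paper's own proof does not escape this: the assertion that the real matrix $A^*W_2^{-1}W_1$ has eigenvalues $\eta,\overline{\eta}$ ``because it is real'' is valid only when those eigenvalues are non-real, i.e.\ precisely in the rotation case; when $O_1$ is a reflection its real commutant contains $\textnormal{diag}(a,b)$ with $a\neq b$, exactly the obstruction you identify with $K=\alpha I+\beta Q$, $\alpha>|\beta|$. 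As you note, the way Lemma \ref{l:W1=wW2} enters the proof of Corollary \ref{c:range_groups_dim2} is through the presence of a common element lying in $SO(2)\backslash{\mathcal C}_2$ after conjugation, so the rotation case is the one that carries the application; but the lemma itself should either assume $O_1,O_2\in SO(2)\backslash{\mathcal C}_2$ or weaken its conclusion in the reflection case.
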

\begin{proof}
We first show that
\begin{equation}\label{e:O_2=AO1A*}
O_2 = AO_1A^*, \quad A \in O(2).
\end{equation}
By \eqref{e:W1O1W1inv=W2O2W2inv},
\begin{equation}\label{e:eig(O1)=eig(O2)}
\textnormal{eig}(O_1) =  \textnormal{eig}(O_2)
\end{equation}
(see \eqref{e:eig(M)}). If $O_1 = O_2$, then \eqref{e:O_2=AO1A*} trivially holds. So, suppose $O_1 \neq O_2$. By \eqref{e:W1O1W1inv=W2O2W2inv} and the uniqueness of the Jordan spectrum, the matrices $O_1$, $O_2$ have the same eigenvalues. If $O_1 \in SO(2) \backslash {\mathcal C}_2$, then we can write $O_1 = U_2 \textnormal{diag}(e^{-i \theta},e^{i \theta})U^*_2$, $\theta \in (0,2\pi)\backslash\{\pi\}$, where $U_2$ is given by \eqref{e:U2}. Note that $O_2 \in SO(2) \backslash{\mathcal C_2}$ by \eqref{e:eig(O1)=eig(O2)} and the fact that $O_2 \in O(2)$. Then, \eqref{e:W1O1W1inv=W2O2W2inv} implies that $O_2 = U_2 \textnormal{diag}(e^{i \theta},e^{-i \theta})U^*_2 = AO_1A^*$, where $A = \textnormal{diag}(1,-1)$. Alternatively, if $O_1 \in O(2) \backslash SO(2)$, then the eigenvalues of $O_1$ are $-1$, 1, with real eigenvectors. By \eqref{e:eig(O1)=eig(O2)}, the same must be true for $O_2$, whence $O_1$ and $O_2 $ only differ by a rotation of their eigenvectors, i.e., there is $O_3 \in SO(2)$ such that $O_2 = O_3 O_1 O^*_3$. By setting $A = O_3$, we establish the relation \eqref{e:O_2=AO1A*} in all cases. Therefore, \eqref{e:W1O1W1inv=W2O2W2inv} and \eqref{e:O_2=AO1A*} imply that
\begin{equation}\label{e:(A*W^(-1)2W1)O1=O1(A*W^(-1)2W1)}
(A^*W^{-1}_2W_1)O_1 = O_1  (A^*W^{-1}_2W_1).
\end{equation}
Assume that $O_1 \in O(2) \backslash {\mathcal C}_2$, i.e., $O_1$ has distinct eigenvalues. By Theorem 2.1 in Didier and Pipiras \cite{didier:pipiras:2012} or Gantmacher \cite{gantmacher:1959}, p.\ 219, there is some (possibly real-valued) unitary matrix $U$ and some $\eta \in \bbC$ such that $A^*W^{-1}_2W_1 = U \textnormal{diag}(\eta,\overline{\eta})U^*$, where the conjugate eigenvalues are a consequence of the fact that $A^*W^{-1}_2W_1$ is real. Thus,
$A^*W^{-1}_2W^2_1W^{-1}_2 A = |\eta|^2 I$, i.e., $W_1 = |\eta|W_2$ since $A \in O(2)$ and $W_1 \in {\mathcal S}_{>0}(2,\bbR)$. By \eqref{e:W1O1W1inv=W2O2W2inv}, this implies that $O_1 = O_2$. $\Box$\\
\end{proof}

The following lemma is used in the proof of Proposition \ref{p:existence_singular_measure}.
\begin{lemma}\label{l:Lambda,Lambda_j_properties}
Let ${\mathcal G}$ be a maximal compact subgroup of $GL(m,\bbR)$, and let $\Lambda_D(dx)$ be the measure \eqref{e:Lambda_D}, $D = \{x_1,\hdots,x_J\}$. Then,
\begin{itemize}
\item [($i$)] two orbits ${\mathcal G}x_{j_1}$, ${\mathcal G}x_{j_2}$ either are disjoint or coincide;
\item [($ii$)] each orbit ${\mathcal G}x_j$, $j = 1,\hdots,J$, is a compact set, and the number of connected components of an orbit ${\mathcal G}x_j$ is no more than the (finite) number of connected components of ${\mathcal G}$;
\item [($iii$)] $\textnormal{supp}\hspace{1mm}\{\Lambda\} = \bigcup^{J}_{j=1}{\mathcal G}x_j$;
\item [($iv$)] $G_0 \in {\mathcal G} \Rightarrow G_0 \in {\mathcal S}^{\textnormal{dom}}(\Lambda)$;
\item [($v$)] the measure $\Lambda(dx)$ assigns different (positive) values to distinct orbits ${\mathcal G}x_{k_1}$ and ${\mathcal G}x_{k_2}$;
\item [($vi$)] $D = \{x_1, \hdots, x_J\} \subseteq \{x_1, \hdots, x_{J'}\} = D' \Rightarrow {\mathcal S}^{\textnormal{dom}}(\Lambda_{D'}) \subseteq {\mathcal S}^{\textnormal{dom}}(\Lambda_{D})$;
\item [($vii$)] for $G \in {\mathcal S}^{\textnormal{dom}}(\Lambda_D)$,
\begin{equation}\label{e:H(mathcal_G)xj=(mathcal_G)xj}
G {\mathcal G}x_{j} = {\mathcal G}x_{j}, \quad j = 1,\hdots, J;
\end{equation}
\item [($viii$)] for a decreasing nested sequence of symmetry groups $\{ {\mathcal S}^{\textnormal{dom}}(\Lambda_{D_k}) \}_{k \in \bbN \cup \{0\}}$, all of which containing ${\mathcal G}$, the equality ${\mathcal G}={\mathcal S}^{\textnormal{dom}}(\Lambda_{D_k})$ holds for some $k$.
\end{itemize}
\end{lemma}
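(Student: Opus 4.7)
\textbf{Proof plan for Lemma \ref{l:Lambda,Lambda_j_properties}.} I would dispatch parts $(i)$--$(iii)$ via standard orbit theory combined with the definition \eqref{e:Lambda_D}. Part $(i)$ is the classical partition property: if $y \in {\mathcal G}x_{j_1} \cap {\mathcal G}x_{j_2}$, then $y = G_1 x_{j_1} = G_2 x_{j_2}$ yields $x_{j_1} = G_1^{-1}G_2 x_{j_2} \in {\mathcal G}x_{j_2}$, forcing the orbits to coincide. For $(ii)$, each orbit is the continuous image of ${\mathcal G}$ under the evaluation map $G \mapsto G x_j$, hence compact; the bound on connected components follows because the continuous image of a connected set is connected. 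Part $(iii)$ is obtained from the identity $\int_{\mathcal G} \delta_{x_j}(G\,dx) {\mathbf H}(dG) = \int_{\mathcal G} \delta_{G^{-1} x_j}(dx) {\mathbf H}(dG)$, whose support is ${\mathcal G}^{-1} x_j = {\mathcal G}x_j$, and then taking the union over $j$.

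For $(iv)$, the left-invariance of the Haar measure gives, after the substitution $\widetilde{G} = G G_0^{-1}$,
\begin{equation*}
\Lambda_{G_0}(B) = \sum_j j n_j \int_{\mathcal G} \delta_{x_j}(G G_0^{-1} B) {\mathbf H}(dG) = \sum_j j n_j \int_{\mathcal G} \delta_{x_j}(\widetilde{G} B) {\mathbf H}(d\widetilde{G}) = \Lambda(B).
\end{equation*}
For $(v)$, I would use formula \eqref{e:integ_over_orbit=1/nj}: each of the $n_k$ connected components of the orbit ${\mathcal G}x_k$ receives mass $k n_k \cdot (1/n_k) = k$ under $\Lambda_D$, while components of other orbits ${\mathcal G}x_{j}$ ($j \neq k$) contribute nothing to this orbit (by $(i)$). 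This $k$-per-component tagging is the crucial mass-distinguishability property used in $(vi)$ and $(vii)$.

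Parts $(vi)$ and $(vii)$ I would prove simultaneously using $(v)$. For $(vii)$, any $G \in {\mathcal S}^{\textnormal{dom}}(\Lambda_D)$ must map connected components of $\textnormal{supp}\{\Lambda_D\}$ to connected components of the same $\Lambda_D$-mass (because $G$ is a homeomorphism preserving both support and mass). Since the mass of each component of ${\mathcal G}x_j$ is the unique value $j$, $G$ must send components of ${\mathcal G}x_j$ to other components of ${\mathcal G}x_j$, whence $G{\mathcal G}x_j = {\mathcal G}x_j$. For $(vi)$, note that by $(i)$ the supports of $\Lambda_D$ and $\Lambda_{D'\setminus D}$ are disjoint, so $\Lambda_{D'} = \Lambda_D + \Lambda_{D'\setminus D}$; any $G \in {\mathcal S}^{\textnormal{dom}}(\Lambda_{D'})$ preserves each orbit ${\mathcal G}x_j$ individually (by the same component-mass argument applied to $\Lambda_{D'}$), in particular for $j = 1, \ldots, J$, whence $G$ preserves $\Lambda_D$ orbit-by-orbit and lies in ${\mathcal S}^{\textnormal{dom}}(\Lambda_D)$.

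The main obstacle is $(viii)$. Up to conjugacy we may assume every ${\mathcal S}^{\textnormal{dom}}(\Lambda_{D_k})$ is a closed subgroup of the compact Lie group $O(m)$, forming a nested decreasing chain above ${\mathcal G}$. The plan is to exploit two integer invariants: the dimension $d_k$ of the identity component ${\mathcal S}^{\textnormal{dom}}(\Lambda_{D_k})^0$, and the index $\ell_k = [{\mathcal S}^{\textnormal{dom}}(\Lambda_{D_k}) : {\mathcal S}^{\textnormal{dom}}(\Lambda_{D_k})^0]$ of connected components. The Lie algebras form a decreasing chain of subspaces of the Lie algebra of $O(m)$, so $\{d_k\}$ is weakly decreasing in $\bbN \cup \{0\}$ and must stabilize; once the identity component stabilizes, the finite index $\ell_k$ is also weakly decreasing in $\bbN$ and must stabilize. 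Hence the chain itself stabilizes in finitely many steps. Since in the construction of Proposition \ref{p:existence_singular_measure} each step either strictly enlarges the pivot set (hence, by $(vi)$, weakly shrinks the symmetry group) or the current symmetry group already equals ${\mathcal G}$, the stabilization value must coincide with ${\mathcal G}$, as claimed. The subtlety is ensuring that every stabilization of the chain --- rather than merely its dimension --- forces equality to ${\mathcal G}$, which follows from the fact that so long as ${\mathcal S}^{\textnormal{dom}}(\Lambda_{D_k}) \supsetneq {\mathcal G}$ the construction produces a further strict drop, as established in \eqref{e:S(Lambda(D1))x=Gdom.x} within the proof of Proposition \ref{p:existence_singular_measure}.
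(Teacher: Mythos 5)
Your plan matches the paper's proof part by part: the same orbit--partition and compactness arguments for $(i)$--$(iii)$, the same Haar-translation computation for $(iv)$ (though the substitution $\widetilde{G}=GG_0^{-1}$ invokes \emph{right}-invariance rather than left-invariance, which is harmless since the Haar measure of a compact group is bi-invariant), the same per-component mass tagging driving $(v)$--$(vii)$, and the same ``dimension drops, then component count drops'' stabilization argument for $(viii)$. No gaps; this is essentially the paper's own proof.
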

\begin{proof}
To show statement ($i$), suppose $x \in {\mathcal G}x_1 \cap {\mathcal G}x_2$ where we set $j_1 = 1$ and $j_2 = 2$ for notational simplicity. Then, there are $G_1$ and $G_2$ such that $G_1 x_1 = G_2 x_2 = x$. So, let $y \in {\mathcal G} x_2$, i.e., $y = G_{y}x_2$ for some $G_y \in {\mathcal G}$. Then, $G_y G^{-1}_2 G_2 x_2 \in {\mathcal G} x_1$, since $G_2 x_2 \in {\mathcal G}x_1$. This shows that ${\mathcal G}x_1 \supseteq {\mathcal G}x_2$. By the same argument, the converse also holds. In regard to statement ($ii$), the compactness of ${\mathcal G}$ and the continuity of the group action imply that each orbit ${\mathcal G}x_j$, $j = 1,\hdots,J$, is a compact set. Therefore, the number of connected components of the orbit ${\mathcal G}x_j$ is no greater than the number of connected components of the group ${\mathcal G}$ (see Meerschaert and Veeh \cite{meerschaert:veeh:1995}, p.\ 4). To show ($iii$), consider a Borel set $B \subseteq ({\mathcal G}x_1 \cup \hdots \cup {\mathcal G}x_J )^c$, and let $G \in {\mathcal G}$. Since $G$ is bijective and $\emptyset = B \cap {\mathcal G}x_j$, then $\emptyset = GB \cap G{\mathcal G}x_j =   GB \cap {\mathcal G}x_j$, $j = 1,\hdots,J$. Therefore, $GB \cap \{x_j\} = \emptyset$ and $\Lambda(B) = \sum^{J}_{j=1} \int_{\mathcal G} j n_j \hspace{1mm} \delta_{x_j}(GB){\mathbf H}(dG) = 0$. By ($ii$), each orbit ${\mathcal G}x_j$ is a closed set. Then, $\textnormal{supp} \hspace{1mm}\{\Lambda\} \subseteq \cup^{J}_{j=1}{\mathcal G}x_j$. Conversely, let $y \in {\mathcal G}x_{j_0}$ for some $j_0$. Then, $\Lambda(\{y\}) \geq \int_{\mathcal G} j_0 n_{j_0} \hspace{1mm}\delta_{j_0}(G\{y\}){\mathbf H}(dG) > 0$, since there is $G_0 \in {\mathcal G}$ and $y \in \bbR^m$ such that $G_0 y = x_{j_0}$. Thus, $\cup^{J}_{j=1}{\mathcal G}x_j \subseteq \textnormal{supp} \hspace{1mm}\{\Lambda\}$. To show ($iv$), note that
$$
\Lambda(G^{-1}_0 dx) = \sum^{J}_{j=1} \int_{{\mathcal G}} j n_j \hspace{1mm}\delta_{x_j}((GG^{-1}_0) dx){\mathbf H}(d(GG^{-1}_0) G_0)
= \sum^{J}_{j=1} \int_{{\mathcal G}} j n_j \hspace{1mm}\delta_{x_j}(K dx){\mathbf H}(dK G_0)
$$
$$
=\sum^{J}_{j=1} \int_{{\mathcal G}}j n_j \hspace{1mm}\delta_{x_j}(K dx){\mathbf H}(dK ) = \Lambda(dx),
$$
where we made the change of variables $GG^{-1}_0 = K$ and used the right-translation invariance of the Haar measure. In regard to ($v$), first note that the orbits ${\mathcal G}x_1, \hdots, {\mathcal G}x_J$ are distinct, hence disjoint by ($i$). Thus, for $k_1 \neq k_2$,
\begin{equation}\label{e:LambdaD(Gxk)=tauk}
\Lambda({\mathcal G}x_{k_1}) = \sum^{J}_{j=1} \int_{\mathcal G} j n_j \hspace{1mm} \delta_{x_j}(G{\mathcal G}x_{k_1}){\mathbf H}(dG) =
\sum^{J}_{j=1} \int_{\mathcal G} j n_j \hspace{1mm}\delta_{x_j}({\mathcal G}x_{k_1}){\mathbf H}(dG) = k_1  \neq k_2  = \Lambda({\mathcal G}x_{k_2}),
\end{equation}
by \eqref{e:integ_over_orbit=1/nj}. \\

To establish ($vi$), pick any $K \in {\mathcal S}^{\textnormal{dom}} (\Lambda_{D'}) $. Then, for fixed $j = 1,\hdots,J'$, $J' \geq J$, and a Borel set $B \subseteq {\mathcal G}x_{j}$,
$$
\int_{{\mathcal G}}j n_j \delta_{x_j}(GK^{-1}B){\mathbf H}(dG) = \Lambda_{D'}(K^{-1}B) =  \Lambda_{D'}(B) = \int_{{\mathcal G}}j n_j \delta_{x_j}(GB){\mathbf H}(dG).
$$
In particular, this also holds for $j = 1,\hdots,J$, i.e., $K \in {\mathcal S}^{\textnormal{dom}}(\Lambda_{D})$.

In regard to $(vii)$, the argument is very similar to that in Meerschaert and Veeh \cite{meerschaert:veeh:1995}, p.\ 3, but we reproduce it here for the reader's convenience. Let $C_1$ be one of the finitely many connected components of one of the orbits ${\mathcal G}x_1,\hdots, {\mathcal G}x_J$. Since $K \in {\mathcal S}^{\textnormal{dom}}(\Lambda_D)$ is a homeomorphism, then $K C_1$ is also connected. Moreover, by Lemma 1 in Meerschaert and Veeh \cite{meerschaert:veeh:1995} and the fact that $\Lambda_D$ is a finite measure, $K$ maps $\supp \hspace{0.5mm}\{\Lambda_D\}$ onto itself. Therefore, there is some connected component $C_2$ of some orbit such that
\begin{equation}\label{e:HC1_contained_C2}
K C_1 \subseteq C_2.
\end{equation}
Since $K^{-1} \in {\mathcal S}^{\textnormal{dom}}(\Lambda_D)$ is also a homeomorphism, then by the same reasoning there is some connected component $C_3$ of some orbit such that $K^{-1}C_2 \subseteq C_3$. By \eqref{e:HC1_contained_C2}, $C_1 \subseteq C_3$; i.e., $C_1 = C_3 = K^{-1}C_2$. Hence, $KC_1 = C_2$. However, again since $K \in {\mathcal S}^{\textnormal{dom}}(\Lambda_D)$, then $\Lambda_D(C_1) = \Lambda_D(KC_1) = \Lambda_D(C_2)$. Note that each connected component of the orbit ${\mathcal G}x_{j}$ has equal mass, namely, $\frac{j}{\sum^{J}_{k=1}k n_{k}}$, where $n_k$ is the number of components of the orbit ${\mathcal G}x_k$ (see Meerschaert and Veeh \cite{meerschaert:veeh:1995}, p.\ 4). Then, $C_1$ and $C_2$ are connected components of the same orbit. Therefore, \eqref{e:H(mathcal_G)xj=(mathcal_G)xj} holds.

Statement $(viii)$ involves the familiar idea that a compact finite-dimensional Lie group does not have an infinite properly nested sequence of closed subgroups, which in turn is a consequence of the fact that any closed subgroup is compact, and therefore has finite dimension and finitely many connected components. For the reader's convenience, we provide a precise argument in the fashion of Meerschaert and Veeh \cite{meerschaert:veeh:1995}, p.\ 4. 
Suppose ${\mathcal G}$, ${\mathcal H}$ are finite-dimensional compact Lie groups. We claim that, if ${\mathcal G} \subset {\mathcal H}$, then either the dimension of ${\mathcal G}$ is strictly less than the dimension of ${\mathcal H}$, or the number of components of ${\mathcal G}$ is strictly less than the number of components of ${\mathcal H}$.  Let us recall that the
dimension $\textnormal{dim}({\mathcal G})$ of a Lie group ${\mathcal G}$ is the dimension of the corresponding Lie algebra (or tangent space) $T{\mathcal G}$ (When ${\mathcal G}$ is
a set of linear operators, the tangent space is the collection of all operators which can be written as $\lim_{k\to\infty}
(G_k-I)/g_k$ where $\{G_k\}$ is a sequence of operators from ${\mathcal G}$ and $\{g_k\}$ is a sequence of real numbers which converges to
0). Since ${\mathcal G} \subset {\mathcal H}$, then $T{\mathcal G} \subseteq T{\mathcal H}$. If $\textnormal{dim}({\mathcal G})=\textnormal{dim}({\mathcal H})$, then the Lie
algebras are equal.  Since the exponential map sends the Lie algebra onto the connected component of the identity, we see
that in this case the connected component of the identity of the two groups is the same, and we can call it ${\mathcal C}$. Now we have ${\mathcal G}/{\mathcal C} \subset
{\mathcal H}/{\mathcal C}$, where both quotient groups are finite groups. If the inclusion were not proper, we would conclude that ${\mathcal G}={\mathcal H}$. However, if $\textnormal{dim}({\mathcal G})=\textnormal{dim}({\mathcal H})$, then the number of components of ${\mathcal G}$ is smaller than that of ${\mathcal H}$, which establishes the claim. It follows that any decreasing sequence of properly nested compact finite-dimensional Lie groups must eventually terminate.  Hence, ${\mathcal G}={\mathcal S}^{\textnormal{dom}}(\Lambda_{D_{k_0}})$ for some $k_0$, which concludes the proof. $\Box$\\
\end{proof}

The next lemma is used in Section \ref{s:on_G}.
\begin{lemma}\label{l:(m,2)-OFBFs:range_symm_vs_-I}
Consider the class of OFBFs taking values in $\bbR^2$ and satisfying the conditions \eqref{e:minReeig(H)=<maxReeig(H)<minReeig(E*)} and \eqref{e:maxeigLambda(Theta)>0=>minReeigLambda(Theta)>0}. Then, a particular $G^{\textnormal{ran}}_1$ implies a restriction on $G^{\textnormal{dom}}_1$ as described in Table \ref{table:OFBF_m_n=2_symmgroups}, where $G^{\textnormal{dom}}_1$ and $G^{\textnormal{ran}}_1$ are understood as in \eqref{e:Gdom1,Gran1_generic}.
\end{lemma}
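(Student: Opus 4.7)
The plan is to reduce everything to a single algebraic dichotomy governing the spherical measure $\Delta(d\theta)$ from the polar-harmonizable representation \eqref{e:int-rep-spectral-F}, and then to read off the four cases from the determinant structure of the groups appearing in Table \ref{table:OFBF_m_n=2_symmgroups}.

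The first step is to establish the equivalence
\begin{equation*}
-I \in G^{\textnormal{dom}}_1(X) \quad \Longleftrightarrow \quad \Im\Delta(d\theta)\equiv {\mathbf 0}.
\end{equation*}
By Lemma \ref{l:G1_via_specmeas}, $-I \in G^{\textnormal{dom}}_1(X)$ iff $F_X(-dx)=F_X(dx)$. Under the change of variables $x=r^{E^*}\theta$ the map $x\mapsto -x$ acts as $(r,\theta)\mapsto (r,-\theta)$ on $(0,\infty)\times S_0$, so, via Proposition \ref{p:akin_to_Prop2.3_Bierme_etal2007}, this reduces to $\Delta(-d\theta)=\Delta(d\theta)$. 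Combining with the Hermitian property $\Delta(-d\theta)=\overline{\Delta(d\theta)}$ from \eqref{e:GX,Theta_on_Borel} yields $\Delta(d\theta)=\overline{\Delta(d\theta)}$, i.e.\ $\Im\Delta(d\theta)\equiv {\mathbf 0}$.

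The second step is the determinant observation: if $\Im\Delta(\Theta_0)\neq{\mathbf 0}$ for some $\Theta_0\in{\mathcal U}$, then by \eqref{e:Gran_structure_n=2} the factor $G_{H,\Theta_0}$ equals $W_{\Theta_0}\bigl(\bigcap_{r>0}{\mathcal C}_{O(2)}(\Pi_{r,\Theta_0})\cap SO(2)\bigr)W_{\Theta_0}^{-1}$; since conjugation by $W_{\Theta_0}\in{\mathcal S}_{>0}(2,\bbR)$ preserves determinants, every element of $G_{H,\Theta_0}$ has determinant $+1$, and hence so does every element of $G^{\textnormal{ran}}_1(X)=\bigcap_\Theta G_{H,\Theta}$. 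Contrapositively: whenever $G^{\textnormal{ran}}_1(X)$ contains an element of determinant $-1$, we must have $\Im\Delta(\Theta)={\mathbf 0}$ for all $\Theta$, whence $-I\in G^{\textnormal{dom}}_1(X)$ by the first step. This immediately settles the rows $G^{\textnormal{ran}}_1\cong {\mathcal D}_2$ and $G^{\textnormal{ran}}_1\cong O(2)$ of Table \ref{table:OFBF_m_n=2_symmgroups}, since ${\mathcal D}_2$ contains $\textnormal{diag}(1,-1)$ and $O(2)$ contains reflections, and determinant is preserved under the positive definite conjugacy $W$ from \eqref{e:G=WOW^(-1)}. The row $G^{\textnormal{ran}}_1\cong {\mathcal C}_2$ requires nothing.

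For the remaining row $G^{\textnormal{ran}}_1\cong SO(2)$, I would argue by contradiction. Assume $-I\in G^{\textnormal{dom}}_1(X)$, so $\Im\Delta\equiv{\mathbf 0}$ and each $G_{H,\Theta}$ falls in the second branch of \eqref{e:Gran_structure_n=2}; combined with \eqref{e:C(O(2))(Pir,Theta)=D2_or_O(2)}, this makes every $G_{H,\Theta}$ conjugate (via $W_\Theta$) to ${\mathcal D}_2$ or $O(2)$. A straightforward induction using Table \ref{table:intersection_groups_dim=2} shows that the family of conjugacy classes $\{O(2),{\mathcal D}_2,{\mathcal C}_2\}$ is closed under intersection, so every intersection of such $G_{H,\Theta}$'s is conjugate to one of $O(2),{\mathcal D}_2,{\mathcal C}_2$; in particular $G^{\textnormal{ran}}_1(X)$ cannot be conjugate to $SO(2)$, contradicting our hypothesis. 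Therefore $-I\notin G^{\textnormal{dom}}_1(X)$.

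The main obstacle I anticipate is the $SO(2)$ row: one has to be slightly careful that the induction on Table \ref{table:intersection_groups_dim=2} really proceeds through conjugacy classes and that the entry $SO(2)$ never appears as an output when both inputs are drawn from $\{{\mathcal D}_2,O(2)\}$. The other cases are immediate once the equivalence $-I\in G^{\textnormal{dom}}_1(X)\Leftrightarrow\Im\Delta\equiv{\mathbf 0}$ and the determinant-preservation observation are in hand.
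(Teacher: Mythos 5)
Your proof is correct and follows the same core route as the paper's: both reduce the condition $-I \in G^{\textnormal{dom}}_1(X)$ to the vanishing of $\Im\Delta$ via the polar representation, and both then read off the consequences from the two branches of \eqref{e:Gran_structure_n=2}. The one place you genuinely diverge is in rows (i) and (iii): the paper argues through Table \ref{table:intersection_groups_dim=2} that once some $G_{H,\Theta_0}$ lies in the $SO(2)$ branch, no further intersection can produce $O(2)$ or ${\mathcal D}_2$, whereas you observe directly that $G^{\textnormal{ran}}_1(X) \subseteq G_{H,\Theta_0}$ then consists only of determinant-$(+1)$ matrices --- a cleaner, self-contained argument that bypasses the intersection table for those two rows. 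For row (ii) your contradiction argument is the contrapositive of the paper's and rests on the same closure-under-intersection reading of Table \ref{table:intersection_groups_dim=2}; note that, like the paper, you are extending a pairwise-intersection table to the possibly infinite intersection $\bigcap_{\Theta \in {\mathcal U}} G_{H,\Theta}$, which strictly speaking needs a word (e.g., any $SO(2)$-conjugate is infinite and connected, so it could only sit inside factors conjugate to $O(2)$, and Lemma \ref{l:W1=wW2} then forces a common conjugacy, making the intersection a full $O(2)$-conjugate rather than an $SO(2)$-conjugate) --- but this gap is present in the paper's own argument as well, so it is not a defect specific to your proposal.
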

\begin{proof}
By the polar representation \eqref{e:change_of_variables_into_polar_coordinates} of the measure $F_X$ and Lemma \ref{l:Gran1_via_specmeas}, $- I \notin G^{\textnormal{dom}}_1$ if and only if there is some set $A(s,\Theta_0)$ as in \eqref{e:A(r,Theta)} such that $F_X(-A(s,\Theta_0)) \neq F_X(A(s,\Theta_0))$, i.e., $\overline{\Delta(\Theta_0)} =\Delta(- \Theta_0) \neq \Delta(\Theta_0)$ for some $\Theta_0 \in {\mathcal U}$ (see \eqref{e:class_U}). In other words, $\Im \Delta(\Theta_0) \neq 0$. Equivalently, by \eqref{e:Gran_structure_n=2},
\begin{equation}\label{e:Lambda(-Theta0)=Lambda(Theta0)}
- I \notin G^{\textnormal{dom}}_1  \Leftrightarrow G_{H,\Theta_0} = W_{\Theta_0}\Big( \bigcap_{r > 0}{\mathcal C}_{O(2)} (\Pi_{r,\Theta_0})\cap SO(2)\Big)W^{-1}_{\Theta_0} \textnormal{ for some $\Theta_0 \in {\mathcal U}$}.
\end{equation}
By the contrapositive, if $- I \notin G^{\textnormal{dom}}_1$, then \eqref{e:Lambda(-Theta0)=Lambda(Theta0)} and \eqref{e:C(O(2))(Pir,Theta)=D2_or_O(2)} imply that $G_{H,\Theta_0} \cong SO(2)$ or ${\mathcal C}_2$ (see also Table \ref{table:intersection_groups_dim=2}). By considering further intersections with the groups $G_{H,\Theta}$, $\Theta \in {\mathcal U}$, Table \ref{table:intersection_groups_dim=2} shows that $G^{\textnormal{ran}}_1$ cannot be conjugate to $O(2)$ or ${\mathcal D}_2$. This establishes statements ($i$) and ($iii$).

Now suppose $G^{\textnormal{ran}}_1 \cong SO(2)$. By Table \ref{table:intersection_groups_dim=2} and expression \eqref{e:Gran_structure_n=2}, there is some $\Theta_0 \in {\mathcal U}$ such that $G_{H,\Theta_0}\cong SO(2)$. By \eqref{e:C(O(2))(Pir,Theta)=D2_or_O(2)} and \eqref{e:Lambda(-Theta0)=Lambda(Theta0)}, $- I \notin G^{\textnormal{dom}}_1$, i.e., statement ($ii$) holds. $\Box$\\
\end{proof}

\begin{remark}
As pointed out in Table \ref{table:OFBF_m_n=2_symmgroups}, $G^{\textnormal{ran}}_1 \cong {\mathcal C}_2$ yields no restriction in the sense it is compatible with either $ -I \in$ or $-I \notin G^{\textnormal{dom}}_1$. This is shown by establishing \eqref{e:Psi}, which implies that ${\mathcal C}_2$ can be matched to any possible domain symmetry group.
\end{remark}

The following lemma is used in the proof of Proposition \ref{p:OFBF_m,n=2,2}.
\begin{lemma}\label{l:(m,n)=(2,2)_construction=>group_contained_O(2)}
Let $F_X(dx) = f_X(x) dx$ be a spectral measure built in the proof of Proposition \ref{p:OFBF_m,n=2,2}, $(ii)$, for a domain symmetry group of the form ${\mathcal C}_{\nu}$ or ${\mathcal D}_{\nu}$, $\nu \in \bbN$, as described in Table \ref{table:OFBF_m_n=2_symmgroups}. Then, ${\mathcal S}^{\textnormal{dom}}(F_X) \subseteq O(2)$.
\end{lemma}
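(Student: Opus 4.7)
The plan is to exploit the explicit radial-angular product form \eqref{e:specdens_OFBF_2,2}, namely $f_X(x) = \|x\|^{-H_E} \Delta(x/\|x\|) \|x\|^{-H_E^*}$ with $\Delta$ piecewise constant on the angular arcs of the construction and $E=I$, and then close with the compactness of ${\mathcal S}^{\textnormal{dom}}(F_X)$. By Lemma \ref{l:G1_via_specmeas}, any $A \in {\mathcal S}^{\textnormal{dom}}(F_X)$ satisfies $|\det A|\, f_X(x) = f_X(Bx)$ a.e., with $B := (A^*)^{-1}$. Writing $x = r\theta$ with $\theta \in S^1$ and using $\|Bx\| = r\|B\theta\|$ together with $Bx/\|Bx\| = B\theta/\|B\theta\|$, the radial factors cancel on both sides, yielding the angular identity
\begin{equation*}
|\det A|\, \Delta(\theta) = \|B\theta\|^{-H_E} \Delta(B\theta/\|B\theta\|)\, \|B\theta\|^{-H_E^*} \quad \text{for a.e.\ } \theta \in S^1.
\end{equation*}

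The key step will be to show $\|B\theta\|$ is constant on an open arc. Since $\theta\mapsto B\theta/\|B\theta\|$ is continuous and $\Delta$ is piecewise constant, one can restrict to a nonempty open sub-arc $\omega' \subseteq S^1$ on which $\Delta(\theta) \equiv \Delta_{1,\ast}$ and $\Delta(B\theta/\|B\theta\|) \equiv \Delta_{j_0,\ast}$ are both fixed positive-definite values from the construction. On $\omega'$ both sides of the angular identity are continuous, so the a.e.\ equality lifts to a pointwise one. Setting $M(s) := s^{-H_E} \Delta_{j_0,\ast} s^{-H_E^*}$ and taking determinants gives $\det M(s) = s^{-2\,\textnormal{tr}(H_E)} \det \Delta_{j_0,\ast}$, which is strictly monotone in $s > 0$ since $\det \Delta_{j_0,\ast} > 0$ and $\textnormal{tr}(H_E) > 0$ (because condition \eqref{e:minReeig(H)=<maxReeig(H)<minReeig(E*)} yields $\textnormal{tr}(H) > 0$, combined with $E = I$). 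Hence $\|B\theta\|$ is constant on $\omega'$. The map $\theta \mapsto \|B\theta\|^2 = \theta^* B^* B \theta$ is a real-analytic trigonometric polynomial in the angular coordinate, so constancy on the open arc $\omega'$ forces constancy on all of $S^1$. This gives $B^* B = cI$ for some $c > 0$, equivalently $B = \sqrt{c}\,Q$ for some $Q \in O(2)$, and hence $A = (B^*)^{-1} = c^{-1/2} Q$.

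To conclude, since ${\mathcal S}^{\textnormal{dom}}(F_X)$ is a compact subgroup of $GL(2,\bbR)$ (see the text following \eqref{e:G=WOW^(-1)}), the sequence $\{A^n\}_{n\in\bbZ} = \{c^{-n/2} Q^n\}_{n\in\bbZ}$ must be bounded in $GL(2,\bbR)$, which forces $c=1$ and hence $A = Q \in O(2)$, as required. The main obstacle is the injectivity of $s \mapsto M(s)$: matrix equations of the form $N = s^{-H_E}\Delta\, s^{-H_E^*}$ could in principle have solutions at $s\neq 1$, but the determinant provides a scalar invariant whose strict monotonicity --- ultimately driven by the positivity of $\textnormal{tr}(H_E)$ inherited from \eqref{e:minReeig(H)=<maxReeig(H)<minReeig(E*)} --- rules this out.
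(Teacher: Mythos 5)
Your argument is correct and reaches the conclusion by a route that differs in execution from the paper's. The paper argues by contradiction: it takes a putative non-orthogonal symmetry $WO_1W^{-1}$, perturbs a point $x_0$ by small rotations inside a single angular slice, and observes that the image segment cannot lie on a sphere, so the radial factor of $f_X$ changes while the angular value $\Delta(\cdot)$ does not --- contradicting the constancy of $f_X$ along the original arc. You instead work directly: you cancel the radial variable to get a purely angular identity, pin down $\|B\theta\|$ on an open arc via the strict monotonicity of $s \mapsto \det\bigl(s^{-H_E}\Delta\, s^{-H_E^*}\bigr) = s^{-2\,\textnormal{tr}(H_E)}\det\Delta$, propagate constancy of the quadratic form $\theta \mapsto \theta^* B^*B\theta$ from the arc to all of $S^1$ by real-analyticity, and then eliminate the residual scalar $c$ by compactness of ${\mathcal S}^{\textnormal{dom}}(F_X)$. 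What your version buys is an explicit justification of the injectivity of the radial scaling map $s \mapsto s^{-H_E}\Delta\, s^{-H_E^*}$, which the paper's final display uses implicitly when it asserts that different norms $\|W O_1 W^{-1}O_\theta x_0\| \neq \|y_0\|$ force different matrix values; your determinant computation is exactly the missing quantitative ingredient there. The bookkeeping discrepancy between $B=(A^*)^{-1}$ and $A^{-1}$ coming from Lemma \ref{l:G1_via_specmeas} and \eqref{e:Gdom1=S(FX)*} is harmless, since $O(2)$ is stable under transpose and inverse.

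One small caveat: your monotonicity step needs $\det\Delta_{j_0,\ast}>0$. In Case 1 of the construction this is explicit (\eqref{e:Delta1D_Delta2D} places the values in ${\mathcal S}_{>0}(2,\bbC)$), but in Case 2 the text only guarantees positive definite \emph{real} parts, and a Hermitian positive semidefinite matrix with positive definite real part can be singular. The fix is immediate: since $H_E$ is real, take real parts of the angular identity first, obtaining $|\det A|\,\Re\Delta_{1,\ast} = s^{-H_E}\,\Re\Delta_{j_0,\ast}\, s^{-H_E^*}$ with $s=\|B\theta\|$, and apply your determinant argument to $\Re\Delta_{j_0,\ast} \in {\mathcal S}_{>0}(2,\bbR)$, which is positive definite in every case of the construction. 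With that adjustment the proof is complete.
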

\begin{proof}
By contradiction, suppose that ${\mathcal S}^{\textnormal{dom}}(F_X) = W {\mathcal O} W^{-1}$, where ${\mathcal O} \subseteq O(2)$ and  $W O_1 W^{-1} \notin O(2)$ for some $O_1 \in O(2)$. Then, for some $x_0 \in S^1$, $y_0 := W O_1 W^{-1} x_0 \notin S^1$. By the continuity of the transformation $W O_1 W^{-1}$, we can without loss of generality assume that neither $x_0$ nor $y_0$ is a boundary point between the slices \eqref{e:angular_region_Dv} or \eqref{e:angular_region_Cv}. Then, we can choose a small $\varepsilon_0 > 0$ so that for $\theta \in (-\varepsilon_0, \varepsilon_0) \backslash\{0\}$, the perturbed points $O_{\theta} x_0$ and $WO_1W^{-1}O_{\theta} x_0$, with $O_\theta$ as in \eqref{e:Otheta_Ftheta}, are in the same slices containing $x_0$ and $y_0$, respectively. Since, in addition, $WO_1W^{-1}$ is a domain symmetry of $F_X$, $f_X$ is continuous around the points $x_0$ and $y_0$, and $|\det(WO_1W^{-1})| = 1$, then by Lemma \ref{l:G1_via_specmeas} we have $f_X(y_0) = f_X(x_0)$. In addition, the fact that $WO_1W^{-1} \notin O(2)$ implies that it maps any segment in the sphere to a segment not contained in any sphere around zero; in particular,
$$
f_X(x_0) = f_X(O_\theta x_0), \quad \|W O_1 W^{-1}O_\theta x_0\| \neq \|W O_1 W^{-1}x_0\|, \quad \theta \in (-\varepsilon_0, \varepsilon_0) \backslash\{0\}.
$$
However, since $y_0$ and $W O_1 W^{-1}O_\theta x_0$ lie in the same slice,
$$
\Delta\Big(\frac{W O_1 W^{-1}O_\theta x_0}{\|W O_1 W^{-1}O_\theta x_0\|}\Big) = f_X\Big(\frac{W O_1 W^{-1}O_\theta x_0}{\|W O_1 W^{-1}O_\theta x_0\|} \Big) = f_X\Big(\frac{y_0}{\|y_0\|} \Big) = \Delta\Big(\frac{y_0}{\|y_0\|} \Big).
$$
As a consequence, by \eqref{e:minReeig(H)=<maxReeig(H)<minReeig(E*)},
$$
f_X(O_\theta x_0) = f_X(W O_1 W^{-1}O_\theta x_0) = \|W O_1 W^{-1}O_\theta x_0\|^{-H_E} \Delta\Big( \frac{y_0}{\|y_0\|}\Big)\|W O_1 W^{-1}O_\theta x_0\|^{-H^*_E}
$$
$$
\neq \|y_0\|^{-H_E} \Delta\Big( \frac{y_0}{\|y_0\|}\Big)\|y_0\|^{-H^*_E} = f_X(y_0) = f_X(x_0) = f_X(O_\theta x_0)
$$
(contradiction). $\Box$\\
\end{proof}



In the following lemma, we construct a measure on ${\mathcal B}(S^{m-1})$ that goes into the measure \eqref{e:FX(dx)=x^-H_Xi(dx)_x^-H_x^(-1)}, expressed in polar coordinates.
\begin{lemma}\label{l:Xi(dx)}
Let ${\mathcal G}$ be a maximal compact subgroup of $O(m)$. Let $A:= A_1 + i A_2 \in M(n,\bbC)$ be a matrix such that $A_1A^*_1 \in {\mathcal S}_{>0}(n,\bbR)$, and
\begin{equation}\label{e:condition_on_Im_Delta}
\left\{\begin{array}{ccc}
(a) & A_2A^{*}_1 - A_1 A^{*}_2 \neq {\mathbf 0}, & \textnormal{if } - I \notin {\mathcal G}; \\
(b) & A_2 = {\mathbf 0}, & \textnormal{if } - I \in {\mathcal G}.
\end{array}\right.
\end{equation}
If either
\begin{itemize}
\item [$(i)$] $m = 2$; or
\item [$(ii)$] $m \in \bbN$ and $- I \in {\mathcal G}$,
\end{itemize}
then there is a scalar-valued measure $\Lambda(d \theta)$ on ${\mathcal B}(S^{m-1})$ such that the measure
\begin{equation}\label{e:Xi(dtheta)}
\Xi(d\theta) := AA^* \Lambda(d \theta) + \overline{AA^*}\Lambda(-d \theta)
\end{equation}
is Hermitian, ${\mathcal S}_{\geq 0}(n,\bbC)$-valued,
\begin{equation}\label{e:supp_Xi(dtheta)}
\textnormal{supp}\hspace{0.5mm}\{\Xi \} = \bigcup^{k_{\Xi}}_{j=1}C_j
\end{equation}
for disjoint connected components $C_j$, $j=1,\hdots,k_{\Xi}$, and
\begin{equation}\label{e:Sdom(Lambda)=G}
{\mathcal S}^{\textnormal{dom}}(\Xi) = {\mathcal G}.
\end{equation}
\end{lemma}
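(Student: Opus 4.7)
The plan is to build $\Xi$ from the scalar-valued measure provided by Proposition \ref{p:existence_singular_measure}, and then to recover $\Lambda$ from $\Xi$ by separating real and imaginary parts. Concretely, I would first invoke Proposition \ref{p:existence_singular_measure} to choose a finite set of pivots $D_* \subseteq S^{m-1}$ such that $\Lambda := \Lambda_{D_*}$ satisfies ${\mathcal S}^{\textnormal{dom}}(\Lambda) = {\mathcal G}$ and $\textnormal{span}\{{\mathcal G}x_1,\ldots,{\mathcal G}x_{J_*}\} = \bbR^m$, then define $\Xi$ by the formula in \eqref{e:Xi(dtheta)}.

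That $\Xi$ is Hermitian and ${\mathcal S}_{\geq 0}(n,\bbC)$-valued follows at once: each of $AA^*$ and $\overline{AA^*}$ is Hermitian positive semidefinite, and $\Xi$ is their combination weighted by the nonnegative scalar measures $\Lambda(d\theta)$ and $\Lambda(-d\theta)$. For the support statement \eqref{e:supp_Xi(dtheta)}, I would note that $\textnormal{supp}\{\Lambda\} = \bigcup_j {\mathcal G}x_j$ by Lemma \ref{l:Lambda,Lambda_j_properties}, $(iii)$, so $\textnormal{supp}\{\Xi\} \subseteq \bigcup_j \bigl({\mathcal G}x_j \cup {\mathcal G}(-x_j)\bigr)$; since each orbit is compact with finitely many connected components by Lemma \ref{l:Lambda,Lambda_j_properties}, $(ii)$, the support decomposes as a finite disjoint union $\bigcup_{j=1}^{k_\Xi} C_j$.

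The substantive step is showing ${\mathcal S}^{\textnormal{dom}}(\Xi) = {\mathcal G}$. The inclusion ${\mathcal G} \subseteq {\mathcal S}^{\textnormal{dom}}(\Xi)$ is direct: for $G \in {\mathcal G}$, since $G$ commutes with $-I$ and $G \in {\mathcal S}^{\textnormal{dom}}(\Lambda)$, one computes $\Xi(G^{-1}B) = AA^*\Lambda(B) + \overline{AA^*}\Lambda(-B) = \Xi(B)$. For the reverse, given $K \in {\mathcal S}^{\textnormal{dom}}(\Xi)$, the identity $\Xi(K^{-1}B) = \Xi(B)$ rearranges into the matrix equation $AA^* a + \overline{AA^*} b = {\mathbf 0}$ with scalars $a := \Lambda(K^{-1}B) - \Lambda(B)$ and $b := \Lambda(-K^{-1}B) - \Lambda(-B)$. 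Splitting into real and imaginary parts gives $\Re(AA^*)(a+b) = {\mathbf 0}$ and $\Im(AA^*)(a-b) = {\mathbf 0}$; since the assumption $A_1 A_1^* \in {\mathcal S}_{>0}(n,\bbR)$ forces $\Re(AA^*) = A_1 A_1^* + A_2 A_2^*$ to be invertible, I would conclude $a + b = 0$.

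To finish, I would split along the two cases of the lemma. In case $(ii)$ and in case $(i)$ with $-I \in {\mathcal G}$, the symmetry $\Lambda(-C)=\Lambda(C)$ yields $b = a$, so $a+b=0$ forces $a=0$. In the remaining sub-case of $(i)$, namely $m=2$ with $-I \notin {\mathcal G}$, the hypothesis (a) that $A_2 A_1^* - A_1 A_2^* \neq {\mathbf 0}$ means $\Im(AA^*) \neq {\mathbf 0}$, so the scalar equation $\Im(AA^*)(a-b)={\mathbf 0}$ forces $a - b = 0$; combined with $a+b=0$ this gives $a=0$. In either case, $\Lambda(K^{-1}B)=\Lambda(B)$ for all Borel $B$, so $K \in {\mathcal S}^{\textnormal{dom}}(\Lambda) = {\mathcal G}$. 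The main obstacle I anticipate is precisely the real/imaginary bookkeeping in this last step: one must use simultaneously that $a,b$ are \emph{scalars} while $\Re(AA^*), \Im(AA^*)$ are fixed matrices, so that each of the matrix identities pins down a single scalar, and one must verify carefully that the dichotomy $-I \in {\mathcal G}$ vs.\ $-I \notin {\mathcal G}$ is correctly matched to the hypotheses (a) and (b) on $A_2$.
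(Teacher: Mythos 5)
Your construction and your key algebraic step coincide with the paper's on the main point: the paper also obtains ${\mathcal S}^{\textnormal{dom}}(\Xi)={\mathcal G}$ by splitting $\Xi(Gd\theta)=\Xi(d\theta)$ into real and imaginary parts and using that $\Re(AA^*)=A_1A_1^*+A_2A_2^*$ is positive definite while $\Im(AA^*)=A_2A_1^*-A_1A_2^*$ is nonzero exactly when hypothesis (a) is in force. Your verification of the Hermitian property, positive semidefiniteness, and the support decomposition is also sound, and your bookkeeping of scalars versus fixed matrices is correct. So your argument does establish the four conclusions as literally stated.

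Where you diverge from the paper is in the construction of $\Lambda$ itself, and this difference matters for how the lemma is used. When $-I\notin{\mathcal G}$, the paper does not simply invoke Proposition \ref{p:existence_singular_measure}: it reruns that proof with the strengthened requirement \eqref{e:technical_condition_on_orbits} so that every pivot satisfies $-x_j\notin{\mathcal G}x_j$, which is exactly where Lemma \ref{l:orbits_maximal_groups} and hence the restriction to $m=2$ enter. The payoff is $\Lambda(-C)=0$ for each connected component $C$ of $\textnormal{supp}\{\Lambda\}$, so that on each component of $\textnormal{supp}\{\Xi\}$ the measure is a scalar multiple of the \emph{single} matrix $AA^*$ (or $\overline{AA^*}$) --- property \eqref{e:Ksi(BandC)}. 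That property is not among the four conclusions in the statement, so you are not obligated to prove it, but it is precisely what the proof of Lemma \ref{l:S(FX(dx))=G}, part $(ii)$, consumes when it identifies $G^{\textnormal{ran}}_1(X)$ with $G^{\textnormal{ran}}_1(B_H)$. A useful self-check you missed: under your construction the hypothesis ``$m=2$'' in case $(i)$ is never used, which signals that the intended construction carries more structure than the bare statement demands. If you intend your $\Xi$ to feed into Theorem \ref{t:OFBF_m,2} as the paper's does, you need to add the antipode-free choice of pivots; as a proof of the lemma exactly as stated, your argument stands.
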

\begin{proof}
Suppose condition $(i)$ holds, and further assume that $- I \notin {\mathcal G}$. We are interested in a measure $\Lambda_D(dx)$ as in \eqref{e:Lambda_D}, $D = \{x_1,\hdots,x_J\} \subseteq S^{m-1}$, satisfying \eqref{e:span=Rm} and \eqref{e:S_Lambda(dx)=G_extended_to_Dstar}, where in addition each pivot is such that
\begin{equation}\label{e:-xj_notin_Gxj}
-x_j \notin {\mathcal G}x_j, \quad j = 1,\hdots,J.
\end{equation}
For this purpose, we now argue that we can rewrite the proof of Proposition \ref{p:existence_singular_measure} while replacing the statement \eqref{e:there_is_x_st_Gx_contained_Sdomx} with
\begin{equation}\label{e:technical_condition_on_orbits}
\textnormal{there exists some element $x \in \bbR^2 \backslash\{0\}$ such that $- x \notin {\mathcal G}x \subset {\mathcal S}^{\textnormal{dom}}(\Lambda_{D})x$.}
\end{equation}
In fact, right before \eqref{e:there_is_x_st_Gx_contained_Sdomx} in the proof of Proposition \ref{p:existence_singular_measure}, we already know that $ {\mathcal G} \subseteq {\mathcal S}^{\textnormal{dom}}(\Lambda_D)$ and may now assume that
\begin{equation}\label{e:G_contained_S(LambdaD)}
{\mathcal G} \subset {\mathcal S}^{\textnormal{dom}}(\Lambda_{D}).
\end{equation}
Recall that ${\mathcal G}$ is maximal, and note that the orbits of ${\mathcal S}^{\textnormal{dom}}(\Lambda_{D})$ must coincide with those of a maximal subgroup in its equivalence class $[{\mathcal S}^{\textnormal{dom}}_1(\Lambda_D)]$. Since $m = 2$  and $- I \notin {\mathcal G}$, by Lemma \ref{l:orbits_maximal_groups}, ($iii$), with ${\mathcal G}_1 := {\mathcal G}$ and ${\mathcal G}_2$ set to the maximal element in the class $[{\mathcal S}^{\textnormal{dom}}_1(\Lambda_D)]$, there is $x_0 \in S^1$ such that
$$
- x_0 \in {\mathcal G}x_0 \subset {\mathcal G}_2 x_0 = {\mathcal S}^{\textnormal{dom}}(\Lambda_{D})x_0.
$$
This establishes \eqref{e:technical_condition_on_orbits} with $x = x_0$. By following the rest of the proof of Proposition \ref{p:existence_singular_measure}, we obtain the desired measure $\Lambda_D(dx)$.

Note that (given $x$) \eqref{e:-x_in_Gx} holds if and only if
\begin{equation}\label{e:-y_in_Gx}
\textnormal{for all $y \in {\mathcal G}x$,  $-y \in {\mathcal G}x$}.
\end{equation}
(in fact, assuming \eqref{e:-x_in_Gx} holds, $y \in {\mathcal G}x$ $\Leftrightarrow$ $y \in {\mathcal G}(-x)$ $\Leftrightarrow$ for some $O \in {\mathcal G}$, $y = O(-x)$ $\Leftrightarrow $ for some $O \in {\mathcal G}$, $- y = Ox \in {\mathcal G}x$). By Lemma \ref{l:Lambda,Lambda_j_properties}, $(iii)$, the support of the measure $\Lambda_D(dx)$ is the union of all sets $C$, where
\begin{equation}\label{e:C_is_a_connected_component}
\textnormal{$C$ is the connected component of some orbit ${\mathcal G} x_{1}, \hdots, {\mathcal G} x_{J}$}.
\end{equation}
In addition, in view of \eqref{e:-y_in_Gx}, \eqref{e:-xj_notin_Gxj} implies that there is no $y \in {\mathcal G}x_j$ such that $-y \in {\mathcal G}x_j$. Therefore,
$$
\Lambda_D(-C) = 0.
$$
Now define the measure
\begin{equation}\label{e:def_Xi(d_theta)}
\Xi(d\theta) = AA^* \Lambda_D(d \theta) + \overline{AA^*} \Lambda_D(-d \theta),
\end{equation}
which has the form \eqref{e:Xi(dtheta)} with
\begin{equation}\label{e:Lambda(d_theta)=LambdaD(d_theta)}
\Lambda(d \theta) = \Lambda_D(d\theta).
\end{equation}
Then, for $C$ as in \eqref{e:C_is_a_connected_component},
\begin{equation}\label{e:Ksi(BandC)}
\Xi(-(B \cap C)) = \overline{\Xi(B \cap C)}, \quad \Xi(B \cap C) = AA^* \Lambda_D(B \cap C), \quad \quad B \in {\mathcal B}(S^{1}),
\end{equation}
i.e., the measure $\Xi(d\theta)$ is Hermitian and takes values in ${\mathcal S}_{\geq 0}(2,\bbC)$. Expression \eqref{e:supp_Xi(dtheta)} is a consequence of Lemma \ref{l:Lambda,Lambda_j_properties}, ($iii$), and the fact that ${\mathcal G}$ is a compact group. Moreover, since $\Re(AA^*), \Im(AA^*) \neq {\mathbf 0}$,
$$
G \in {\mathcal S}^{\textnormal{dom}}(\Xi) \Leftrightarrow
\Xi(Gdx)  = \Xi(dx)
$$
$$
\Leftrightarrow AA^* \Lambda_D(G d \theta) + \overline{AA^*} \Lambda_D(- G d \theta)
= AA^* \Lambda_D(d \theta) + \overline{AA^*} \Lambda_D(-  d \theta)
$$
$$
\Leftrightarrow \Re AA^* \{\Lambda_D(G d \theta) + \Lambda_D(- G d \theta)\} + i \Im AA^*\{\Lambda_D(G d \theta) - \Lambda_D(- G d \theta)\}
$$
$$
= \Re AA^* \{\Lambda_D(d \theta) + \Lambda_D(- d \theta)\} + i \Im AA^*\{\Lambda_D(d \theta) - \Lambda_D(- d \theta)\}
$$
$$
\Leftrightarrow \Lambda_D(G d \theta) + \Lambda_D(- G d \theta) = \Lambda_D(d \theta) + \Lambda_D(- d \theta), \quad
\Lambda_D(G d \theta) - \Lambda_D(- G d \theta) = \Lambda_D(d \theta) - \Lambda_D(- d \theta).
$$
$$
\Leftrightarrow \Lambda_D(G d\theta)  = \Lambda_D(d\theta) \Leftrightarrow G \in {\mathcal G},
$$
where the last equivalence is a consequence of Proposition \ref{p:existence_singular_measure}.

The case defined by condition $(ii)$ (for which $-I \in {\mathcal G}$, $m \in \bbN$) can be tackled based on the same formalism but without the modification \eqref{e:technical_condition_on_orbits}. In fact, under the aforementioned condition, let $\Lambda_D(dx)$ be the measure given by Proposition \ref{p:existence_singular_measure} (without any modification). Then, $-I \in {\mathcal G}$ implies that $\Lambda_D(-dx) = \Lambda_D(dx)$. Define the measure $\Xi(d \theta)$ by the same expression \eqref{e:def_Xi(d_theta)}, which in this case reduces to
$$
\Xi(d \theta) = 2\Re AA^* \Lambda_D(d \theta).
$$
It is clear that ${\mathcal G} ={\mathcal S}^{\textnormal{dom}}(\Xi)$ holds, and \eqref{e:supp_Xi(dtheta)} is a consequence of Lemma \ref{l:Lambda,Lambda_j_properties}, ($iii$). $\Box$
\end{proof}

\begin{remark}
Without the constraint \eqref{e:condition_on_Im_Delta} in the construction of the measure $\Xi(dx)$, the statement \eqref{e:Sdom(Lambda)=G} is not generally true. Indeed, if $-I \notin {\mathcal G}$ but we set $A_2 = {\mathbf 0}$, then the measure $\Xi(dx)$ as defined by the expression \eqref{e:Xi(dtheta)} becomes $\Xi(dx) = \Re AA^* (\Lambda_D(dx)+\Lambda_D(-dx))$ and hence satisfies $\Xi(-dx) =  \Xi(dx)$, i.e., $- I \in {\mathcal S}^{\textnormal{dom}}(\Xi)$. Consequently, ${\mathcal S}^{\textnormal{dom}}(\Xi) \neq {\mathcal S}^{\textnormal{dom}}(\Lambda_D)$.
\end{remark}

\begin{remark}\label{r:-I_on_mathcalG}
In regard to Lemma \ref{l:Xi(dx)}, it is natural to ask whether, for general $m$, we can drop the assumption that $- I \in {\mathcal G}$. In order to use the proof of the lemma in its current form, we would need to generalize the step \eqref{e:technical_condition_on_orbits}. However, an explicit description of all maximal subgroups of $O(m)$ is not available in dimension $m \geq 3$, and it is currently a conjecture that the claim \eqref{e:technical_condition_on_orbits} holds in general.
\end{remark}

The following lemma is used in the proof of Theorem \ref{t:OFBF_m,2}.
\begin{lemma}\label{l:S(FX(dx))=G}The second equalities in each expression \eqref{e:Gdom(X)=SdomXi} and \eqref{e:Gran(X)=Gran(BH)} hold, i.e.,
\begin{itemize}
\item [$(i)$] ${\mathcal S}^{\textnormal{dom}}(F_X)= {\mathcal S}^{\textnormal{dom}}(\Xi)$;
\item [$(ii)$] ${\mathcal S}^{\textnormal{ran}}(F_X) = G^{\textnormal{ran}}_1(B_{H})$,
\end{itemize}
respectively.
\end{lemma}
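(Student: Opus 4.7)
The plan is to exploit the polar decomposition $F_X(dx) = r^{-H}\Xi(d\theta)r^{-H^*}r^{-1}dr$ of \eqref{e:FX(dx)=x^-H_Xi(dx)_x^-H_x^(-1)} (with $E = I$), which cleanly separates a rigid radial factor from the spherical factor $\Xi(d\theta)$. Throughout, I use polar coordinates $x = r\theta$ with $r > 0$, $\theta \in S^{m-1}$, and write $B_r = \{\theta \in S^{m-1}: r\theta \in B\}$ for each Borel $B \subseteq \bbR^m$.

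For $(i)$, the inclusion ``$\supseteq$'' is immediate once one knows $\mathcal{S}^{\textnormal{dom}}(\Xi) \subseteq O(m)$: by Lemma \ref{l:Xi(dx)}, $\mathcal{S}^{\textnormal{dom}}(\Xi) = \mathcal{G} \subseteq O(m)$, so for $A \in \mathcal{S}^{\textnormal{dom}}(\Xi)$ one has $A^{-1}(r\theta) = rA^{-1}\theta$ and hence $(A^{-1}B)_r = A^{-1}B_r$; iterated integration then yields
\begin{equation*}
F_X(A^{-1}B) = \int_0^\infty r^{-H}\Xi(A^{-1}B_r)r^{-H^*}r^{-1}dr = \int_0^\infty r^{-H}\Xi(B_r)r^{-H^*}r^{-1}dr = F_X(B).
\end{equation*}
The reverse inclusion requires first establishing that $\mathcal{S}^{\textnormal{dom}}(F_X) \subseteq O(m)$, after which the above computation reverses. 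To see that any $A \in \mathcal{S}^{\textnormal{dom}}(F_X)$ is orthogonal, note that $A$ must preserve $\supp\{F_X\}$, which by Lemma \ref{l:Xi(dx)} is the cone over the finitely many connected components $C_1,\ldots,C_{k_\Xi}$ of $\supp\{\Xi\}$; writing $A^{-1}\theta = d_\theta\,\theta'$ with $d_\theta > 0$ and $\theta' \in \supp\{\Xi\}$, applying $F_X$-invariance to conical annuli $\{r\theta: a < r < b\}$ and exploiting the commutativity of the family $\{r^{-H}: r > 0\}$, one derives the pointwise identity $d_\theta^H\Xi(\{\theta\})d_\theta^{H^*} = \Xi(\{\theta'\})$. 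Since each $\Xi(\{\theta\})$ is a positive multiple of $AA^*$ or $\overline{AA^*}$ (with $AA^*$ of full rank), and the map $d \mapsto d^H M d^{H^*}$ preserves the ray $\{\lambda M: \lambda > 0\}$ only at $d=1$ under generic $(H,AA^*)$, one concludes $d_\theta = 1$ for all $\theta$, i.e., $A \in O(m)$.

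For $(ii)$, by the uniqueness of matrix-valued measures on the $\pi$-system of product sets $(a,b)\times\Theta$, the invariance $CF_X C^* = F_X$ is equivalent to $Cr^{-H}\Xi(d\theta)r^{-H^*}C^* = r^{-H}\Xi(d\theta)r^{-H^*}$ on $(0,\infty)\times S^{m-1}$, and by Lemma \ref{l:Gran1_via_specmeas} applied to \eqref{e:OFBM_main_theorem}, $C \in G^{\textnormal{ran}}_1(B_H)$ iff $Cr^{-H}AA^*r^{-H^*}C^* = r^{-H}AA^*r^{-H^*}$ together with its conjugate analogue for $\overline{AA^*}$ hold for all $r > 0$. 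If $-I \in \mathcal{G}$, condition \eqref{e:condition_on_Im_Delta}(b) forces $A_2 = {\mathbf 0}$, so $AA^*$ is real and $\Lambda(-d\theta) = \Lambda(d\theta)$, giving $\Xi(d\theta) = 2AA^*\Lambda(d\theta)$; the two characterizations coincide. If $-I \notin \mathcal{G}$, the Lemma \ref{l:Xi(dx)} construction ensures $-x_j \notin \mathcal{G}x_j$ at every pivot, so $\mathcal{G}x_j$ and $-\mathcal{G}x_j$ are disjoint; picking $\Theta$ to be a connected component of some $\mathcal{G}x_j$ yields $\Lambda(\Theta) > 0$ and $\Lambda(-\Theta) = 0$, hence $\Xi(\Theta) = AA^*\Lambda(\Theta)$, and the $C$-invariance for this $\Theta$ isolates the $AA^*$ condition; taking $-\Theta$ isolates the $\overline{AA^*}$ condition. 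The converse follows by linearity in $\Lambda(\Theta),\Lambda(-\Theta)$ combined with the decomposition of $\Xi$.

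The main obstacle is the step $\mathcal{S}^{\textnormal{dom}}(F_X) \subseteq O(m)$ in the ``$\subseteq$'' direction of $(i)$, which rests on a mild genericity condition on $(H,AA^*)$. This genericity is available in the setting of Theorem \ref{t:OFBF_m,2} because $(H,AA^*)$ is constrained only by $G^{\textnormal{ran}}_1(B_H) = \mathcal{G}_2$ and the positive-definiteness of $\Re(AA^*)$; within that stratum one can always perturb to avoid degenerate radial scaling identities.
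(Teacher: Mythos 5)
Your part $(ii)$ is sound and takes a slightly more direct route than the paper: you isolate the $AA^*$ and $\overline{AA^*}$ conditions by testing the invariance on a connected component $C_j$ of an orbit and on its antipode $-C_j$ (using $\Lambda(C_j)>0$, $\Lambda(-C_j)=0$ when $-I\notin{\mathcal G}$), and recover the converse by linearity. The paper instead computes $\Pi_{r,\Theta}$ and $\Pi_{I,\Theta}$ from \eqref{e:Pi-rTheta_Pi-I,Theta} for the discrete spherical measure $\xi_{B_0}$ supported on $\{\pm 1\}$, shows the resulting centralizers coincide with those of the OFBM \eqref{e:OFBM_main_theorem} for every $\Theta$, and invokes Proposition \ref{p:Gran_structure}$(i)$. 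Both arguments deliver the same conclusion; yours avoids the centralizer machinery. Likewise the forward inclusion of $(i)$ matches the paper's, which passes through the $\pi$-system of polar rectangles and the uniqueness theorem for measures.

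The genuine gap is in the reverse inclusion of $(i)$. You correctly identify that one must rule out non-orthogonal elements of ${\mathcal S}^{\textnormal{dom}}(F_X)$, but the mechanism you propose --- that $d\mapsto d^{H}Md^{H^*}$ leaves the ray $\{\lambda M:\lambda>0\}$ only at $d=1$ ``under generic $(H,AA^*)$'' --- fails precisely in cases the lemma must cover. If $H=hI$ then $d^{H}Md^{H^*}=d^{2h}M$ is a positive multiple of $M$ for \emph{every} $d>0$, and scalar (or scalar-conjugate) $H$ together with $AA^*$ proportional to the identity is exactly what is needed to realize the range group ${\mathcal G}_2=O(2)$ in Theorem \ref{t:OFBF_m,2}; more generally $(H,AA^*)$ is pinned down by the requirement $G^{\textnormal{ran}}_1(B_H)={\mathcal G}_2$ and the large range groups are themselves non-generic, so ``perturbing to genericity'' would destroy the very property the construction is designed to produce. (A further wrinkle: when ${\mathcal G}$ has infinite orbits, e.g.\ ${\mathcal G}=O(2)$, the singleton values $\Xi(\{\theta\})$ you use in the pointwise identity all vanish, so that identity carries no information.) The paper sidesteps this entirely: its converse direction only considers $O\in O(m)$ satisfying \eqref{e:muO(B)=muI(B)} and uses Lebesgue differentiation in $r$ to strip the radial factor and conclude $O^*\in{\mathcal S}^{\textnormal{dom}}(\Xi)$; the containment of the relevant symmetries in $O(m)$ is handled separately (compactness of the symmetry group plus the orbit/maximality arguments of Section \ref{s:domain_symm_groups_are_maximal}, and Lemma \ref{l:(m,n)=(2,2)_construction=>group_contained_O(2)} in the absolutely continuous case), not by a radial-scaling rigidity argument. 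As written, your proof of $(i)$ does not establish the lemma for the parameter choices it is actually applied to.
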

\begin{proof}
We first show $(i)$. Recall that, by Lemma \ref{l:Xi(dx)}, $\Xi(d\theta)$ is constructed so that ${\mathcal S}^{\textnormal{dom}}(\Xi) = {\mathcal G}$ for some maximal compact subgroup ${\mathcal G}$ of $O(m)$. Define the class of sets
$$
{\mathcal A} = \{A(s_1,s_2,\Theta):  s_2 \geq s_1 > 0, \hspace{0.5mm}\Theta \in {\mathcal B}(S^{m-1}) \},
$$
where $A(s_1,s_2,\Theta)= \{r \theta: s_1 < r \leq s_2, \hspace{0.5mm} \theta \in \Theta\}$. Then,
\begin{equation}\label{e:sigma(A)=B(Rm)_A1_and_A2_in_A}
\sigma({\mathcal A}) = {\mathcal B}(\bbR^m), \quad A_1, A_2 \in {\mathcal A} \Rightarrow A_1 \cap A_2 \in {\mathcal A}.
\end{equation}
Define the family of measures $\mu_{O}(A) = \int_{A} (F_X)_O(dx)$, $A \in {\mathcal A}$, $O \in O(m)$. First, assume that $O \in {\mathcal G} = {\mathcal S}^{\textnormal{dom}}(\Xi)$, where the latter equality is a consequence of Lemma \ref{l:Xi(dx)}. Then,
$$
\mu_{O}(A) = \int^{s_2}_{s_1}\int_{\Theta} r^{-H}\Xi(O^* d \theta)r^{-H^*}r^{-1}dr = \int^{s_2}_{s_1}\int_{\Theta} r^{-H}\Xi(d \theta)r^{-H^*}r^{-1}dr
= \mu_{I}(A).
$$
Since this holds for any $A \in {\mathcal A}$, then \eqref{e:sigma(A)=B(Rm)_A1_and_A2_in_A} and an entry-wise application of Theorem 1.1.3 in Meerschaert and Scheffler \cite{meerschaert:scheffler:2001} imply that
\begin{equation}\label{e:muO(B)=muI(B)}
\mu_{O}(B) = \mu_{I}(B), \quad B \in {\mathcal B}(\bbR^m).
\end{equation}
Equivalently, $O \in {\mathcal S}^{\textnormal{dom}}(F_X)$. This establishes that ${\mathcal G} = {\mathcal S}^{\textnormal{dom}}(\Xi) \subseteq {\mathcal S}^{\textnormal{dom}}(F_X)$.

Conversely, for some $O \in O(m)$, assume that \eqref{e:muO(B)=muI(B)} holds. In particular, for $B = A \in {\mathcal A}$, Lebesgue's differentiation theorem implies that
$$
s^{-H} \int_{\Theta} \Xi(O d \theta)s^{-H^*}s^{-1} = s^{-H}\int_{\Theta} \Xi(d \theta)s^{-H^*}s^{-1}, \quad s > 0 \hspace{3mm}\textnormal{a.e.},
$$
i.e.,
$$
\int_{\Theta} \Xi(O d \theta) = \int_{\Theta} \Xi(d \theta), \quad \Theta \in {\mathcal B}(S^{m-1}).
$$
In other words, $O^* \in {\mathcal S}^{\textnormal{dom}}(\Xi) = {\mathcal G}$. Hence, statement $(i)$ holds. \\

We now show $(ii)$. Consider an OFBF $X$ with spectral measure \eqref{e:FX(dx)=x^-H_Xi(dx)_x^-H_x^(-1)}. For $C \in GL(n,\bbR)$, $C \in G^{\textnormal{ran}}_1(X)$ if and only if
\begin{equation}\label{e:condition_range_symm_polar}
C r^{-H}\Xi(\theta) r^{-H^*}r^{-1}dr C^* = r^{-H}\Xi(\theta) r^{-H^*}r^{-1}dr.
\end{equation}
By \eqref{e:supp_Xi(dtheta)}, expression \eqref{e:condition_range_symm_polar} is equivalent to
$$
C \hspace{0.5mm}r^{-H}\Xi(B_0 \cap C_j) r^{-H^*}r^{-1}dr \hspace{0.5mm}C^* = r^{-H}\Xi(B_0 \cap C_j) r^{-H^*}r^{-1}dr, \quad B_0 \in {\mathcal B}(S^{m-1}), \quad j = 1,\hdots,k_{\Xi}.
$$
So, fix $j$ and a set $B_0 \in {\mathcal B}(S^{m-1})$ such that $\Xi(B_0 \cap C_j) \neq {\mathbf 0}$. Define the (discrete) spherical measure
$$
\xi_{B_0}(B) = \left\{\begin{array}{cc}
\Xi(B_0 \cap C_j), & B = \{1\};\\
\Xi(-B_0 \cap C_j), & B = \{-1\}.\\
\end{array}\right.
$$
Then,
$$
r^{-H}\xi_{B_0}\{d\vartheta\} r^{-H^*}r^{-1}dr
$$
is, up to a constant, the spectral measure of the OFBM \eqref{e:OFBM_main_theorem}, since by \eqref{e:Ksi(BandC)}
$$
\Re \xi_{B_0}\{1\} = \Re AA^* \hspace{1mm}\Lambda_{D}(B_0 \cap C_j), \quad \Im \xi_{B_0}\{1\} = \Im AA^* \hspace{1mm}\Lambda_{D}(B_0 \cap C_j),
$$
and
$$
\Re \xi_{B_0}\{-1\} = \Re AA^* \hspace{1mm}\Lambda_{D}(B_0 \cap C_j), \quad \Im \xi_{B_0}\{-1\} = -\Im AA^* \hspace{1mm}\Lambda_{D}(B_0 \cap C_j).
$$
Consequently, for $\Theta = B_0 \cap C_j$, and $\Pi_{r,\Theta}$ and $\Pi_{I,\Theta}$ as in \eqref{e:Pi-rTheta_Pi-I,Theta},
$$
\Pi_{r,\Theta} = r^{-W^{-1}_{\Theta}H W_{\Theta}}r^{-W_{\Theta}\hspace{0.5mm}H\hspace{0.5mm}W^{-1}_{\Theta}} = r^{-(\Re AA^*)^{-1/2} \hspace{0.5mm}H\hspace{0.5mm} (\Re AA^*)^{1/2}}r^{-(\Re AA^*)^{1/2} \hspace{0.5mm}H\hspace{0.5mm} ( \Re AA^*)^{-1/2}},
$$
and
$$
\Pi_{I,\Theta} = W^{-1}_{\Theta} \Im \Delta(\Theta) W^{-1}_{\Theta} = \frac{( \Re AA^*)^{-1/2}\Im AA^*( \Re AA^*)^{-1/2}}{\Lambda_{D}(B_0 \cap C_j)} .
$$
Hence,
$$
{\mathcal C}_{O(n)}(\Pi_{r,\Theta}) = {\mathcal C}_{O(n)}(\Pi_{r}),
$$
and
$$
{\mathcal C}_{O(n)}(\Pi_{I,\Theta}) = {\mathcal C}_{O(n)}(( \Re AA^*)^{-1/2}\Im(AA^*)( \Re AA^*)^{-1/2})= {\mathcal C}_{O(n)}(\Pi_{I}),
$$
where $\Pi_r$ and $\Pi_I$ represent the functions \eqref{e:Pi-rTheta_Pi-I,Theta} for the OFBM \eqref{e:OFBM_main_theorem}, and we can write
$$
G_{H,\Theta} = (\Re AA^*)^{1/2}\Big(\bigcap_{r > 0} {\mathcal C}_{O(n)}(\Pi_{r}) \cap {\mathcal C}_{O(n)}(\Pi_{I})\Big) (\Re AA^*)^{-1/2}.
$$
By Proposition \ref{p:Gran_structure}, $(i)$, statement $(ii)$ is established. $\Box$\\
\end{proof}

The following lemma is used in the proof of Lemma \ref{l:Xi(dx)}. Recall that $\subset$ denotes proper set inclusion.
\begin{lemma}\label{l:orbits_maximal_groups}
Consider the maximal subgroups of $O(2)$ in the sense of Section \ref{s:domain_symm_groups_are_maximal}.
\begin{itemize}
\item [$(i)$] Let ${\mathcal G}$ be a maximal subgroup for which $-I \notin {\mathcal G}$. Then, for a given $x \in S^{1}$,
\begin{equation}\label{e:-x_in_Gx}
- x \in {\mathcal G}x
\end{equation}
if and only if for some $F \in O(2)\backslash SO(2)$, $F \in {\mathcal G}$, the vector $x$ lies at $\pi/2$ angular distance from the reflection axis of $F$;
\item [$(ii)$] for two maximal subgroups ${\mathcal G}_1 \subset {\mathcal G}_2$ such that
\begin{equation}\label{e:x_st_-x_notin_G_1x}
-I \notin {\mathcal G}_1,
\end{equation}
\begin{equation}\label{e:x_st_-x_notin_G_2x}
\textnormal{there is $x_0 \in S^1$ such that $-x_0 \notin {\mathcal G}_1 x_0$ and ${\mathcal G}_1 x_0 \subset {\mathcal G}_2 x_0$}.
\end{equation}
\end{itemize}
\end{lemma}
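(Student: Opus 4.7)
For part $(i)$, my plan is to use the standard decomposition of $O(2)$ into rotations $O_\theta \in SO(2)$ and reflections $F_\theta \in O(2)\setminus SO(2)$. If $G \in {\mathcal G}$ satisfies $Gx = -x$, then since the only rotation sending $x$ to $-x$ is $O_\pi = -I$, which by hypothesis is excluded from ${\mathcal G}$, the element $G$ must be a reflection $F$. Such a reflection admits an orthogonal eigenspace decomposition into its $+1$-eigenspace (the reflection axis) and its $-1$-eigenspace, so $Fx=-x$ is equivalent to $x$ being perpendicular to the axis, i.e.\ at angular distance $\pi/2$ from it. The converse direction is the same eigenspace observation read backwards; alternatively, one can compute $F_\theta x$ directly from the explicit formula in \eqref{e:Otheta_Ftheta}.

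For part $(ii)$, I plan to construct $x_0$ by excluding two finite ``bad'' sets. Since $-I \notin {\mathcal G}_1$ and ${\mathcal G}_1$ is maximal in $O(2)$, the classification of maximal compact subgroups (middle column of Table \ref{table:OFBF_m=2_n=2_symmgroups}) forces ${\mathcal G}_1$ to be one of the finite groups ${\mathcal C}_\nu$ ($\nu$ odd), ${\mathcal D}_\nu$ ($\nu$ odd), or ${\mathcal D}^*_1$, the only infinite maximal subgroup of $O(2)$ being $O(2)$ itself, which contains $-I$. In particular ${\mathcal G}_1$ has only finitely many reflections, so by part $(i)$ the set $N_1 := \{x \in S^1 : -x \in {\mathcal G}_1 x\}$ is finite (a finite union of antipodal pairs). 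Next I would bound the coincidence set $S := \{x \in S^1 : {\mathcal G}_1 x = {\mathcal G}_2 x\}$. If ${\mathcal G}_2 = O(2)$, then ${\mathcal G}_2 x = S^1$ whereas ${\mathcal G}_1 x$ is finite, so $S=\emptyset$. Otherwise ${\mathcal G}_2$ is also finite; picking any $G \in {\mathcal G}_2 \setminus {\mathcal G}_1$ (nonempty by proper inclusion), I would use
\[
S \subseteq \{x \in S^1 : Gx \in {\mathcal G}_1 x\} = \bigcup_{H \in {\mathcal G}_1} \mathrm{Fix}_{S^1}(H^{-1}G),
\]
where each $H^{-1}G \in O(2)$ is non-identity (since $G \notin {\mathcal G}_1$) and therefore fixes at most two points of $S^1$, giving $|S| \leq 2|{\mathcal G}_1| < \infty$. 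Since $N_1 \cup S$ is then finite and $S^1$ is uncountable, any $x_0 \in S^1 \setminus (N_1 \cup S)$ satisfies both $-x_0 \notin {\mathcal G}_1 x_0$ and ${\mathcal G}_1 x_0 \subsetneq {\mathcal G}_2 x_0$, as required.

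The only conceptual point to verify carefully will be that $S \neq S^1$, but this is automatic from maximality: if ${\mathcal G}_1 x = {\mathcal G}_2 x$ for all $x$, then ${\mathcal G}_1 \sim {\mathcal G}_2$ in the sense of \eqref{e:equiv_class_based_on_orbits}, and the maximality of both groups in their common equivalence class forces ${\mathcal G}_1 = {\mathcal G}_2$, contradicting the strict inclusion. Once $S$ and $N_1$ are known to be finite, the rest of the argument is pure bookkeeping, so I do not expect any serious obstacle beyond invoking the classification of maximal subgroups of $O(2)$.
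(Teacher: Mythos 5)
Your proof is correct, and for part $(ii)$ it takes a genuinely different route from the paper. For part $(i)$ both arguments rest on the same observation (the only rotation carrying $x$ to $-x$ is $O_\pi=-I$, so the carrier must be a reflection with $x$ in its $-1$-eigenspace), though your write-up is actually the more explicit of the two -- the paper disposes of $(i)$ in a few lines by appealing to the classification and to Example \ref{ex:D_3_-x_notin_Gx}. The real divergence is in part $(ii)$: the paper first splits off the easy case $-I\in{\mathcal G}_2$, then runs a case analysis over the subcases ${\mathcal C}_{\nu_1}\subset{\mathcal C}_{\nu_2}$, ${\mathcal C}_{\nu_1}\subset{\mathcal D}_{\nu_2}$ and ${\mathcal D}_{\nu_1}\subset{\mathcal D}_{\nu_2}$, exhibiting an explicit pivot such as $x_0=e^{i\frac{2\pi}{4\nu}\frac{1}{2}}$ and counting orbit cardinalities (using divisibility of $\nu_2$ by $\nu_1$, etc.). You instead exclude two finite bad sets: $N_1$ is finite by part $(i)$ together with the finiteness of ${\mathcal G}_1$ (which does require the classification, to rule out infinite maximal groups other than $O(2)$), and the coincidence set $S$ is covered by $\bigcup_{H\in{\mathcal G}_1}\mathrm{Fix}_{S^1}(H^{-1}G)$ for any $G\in{\mathcal G}_2\setminus{\mathcal G}_1$, each term contributing at most two points since a non-identity element of $O(2)$ fixes at most two points of $S^1$. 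This genericity argument is more uniform and less error-prone than the paper's bookkeeping, and it handles all subcases (including ${\mathcal D}^*_1$ and the case ${\mathcal G}_2=O(2)$) at once; what it gives up is the explicit pivot $x_0$, which the paper reuses downstream in the constructions of Proposition \ref{p:OFBF_m,n=2,2} and Figure \ref{f:asymm_sphere}, but for the existence statement of the lemma itself nothing is lost.
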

\begin{proof}
Throughout the proof, without loss of generality we suppose the groups' conjugacies is $W = I$.

Statement $(i)$ is a consequence of the complete description of the maximal compact subgroups in dimension $m = 2$ provided in Table \ref{table:OFBF_m=2_n=2_symmgroups}, middle column. In fact, $- I \notin {\mathcal G}$ implies that ${\mathcal G}$ must be one of the subgroups ${\mathcal C}_{\nu}$, ${\mathcal D}_{\nu}$, $2 \nu + 1$, $\nu \in \bbN \cup \{0\}$. For any such cyclic subgroup ${\mathcal C}_\nu$, statement $(i)$ is trivially true, since it does not include reflections. In addition, the reflections in the subgroup ${\mathcal D}_\nu$, which are finite in number, also correspond to a finite number of reflection axes. Therefore, the number of points for which $-x \in {\mathcal G}x$ is also finite (see Example \ref{ex:D_3_-x_notin_Gx} below), whence statement $(i)$ holds.

We now turn to statement $(ii)$.  First, suppose $- I \in {\mathcal G}_{2}$. For any ${\mathcal G}_{1}$ described in Table \ref{table:OFBF_m=2_n=2_symmgroups}, middle column, statement $(i)$ implies that it is possible to choose $x_0 \in S^1$ such that $-x_0 \notin {\mathcal G}_{1}x_0$. Since $-x_0 \in {\mathcal G}_{2}x_0$, \eqref{e:x_st_-x_notin_G_2x} holds. So, from now on we suppose
\begin{equation}\label{e:-I_notin_G2}
- I \notin {\mathcal G}_{2}.
\end{equation}
By Table \ref{table:OFBF_m=2_n=2_symmgroups}, middle column, it suffices to consider the following cases:
\begin{itemize}
\item [$(ii.a)$] ${\mathcal G}_1 = {\mathcal C}_{\nu_1}$, ${\mathcal G}_2 \in \{{\mathcal C}_{\nu_2},{\mathcal D}_{\nu_2}\}$, $\nu_1 \leq \nu_2$;
\item [$(ii.b)$] ${\mathcal G}_1 = {\mathcal D}_{\nu_1}$, ${\mathcal G}_2 = {\mathcal D}_{\nu_2}$, $\nu_1 < \nu_2$,
\end{itemize}
where $O(2)$ is excluded under \eqref{e:x_st_-x_notin_G_1x} and \eqref{e:-I_notin_G2}. For subcase $(ii.a)$, in light of statement $(i)$ and Table \ref{table:OFBF_m=2_n=2_symmgroups}, middle column, by \eqref{e:-I_notin_G2} we can pick $x_0 \in S^1$ such that $-x_0 \notin {\mathcal G}_2 x_0$ (cf.\ Figure \ref{f:asymm_sphere}). Note that for $O_1,O_2 \in SO(2)$, $O_1 x_0 = O_2 x_0 \Rightarrow O_1 = O_2$ (n.b.: this holds for any $x_0 \neq 0$). When $\nu_1 < \nu_2$, this implies ${\mathcal G}_2$ contains more rotations than ${\mathcal G}_1$. Thus, the orbit ${\mathcal G}_2 x_0$ contains more points than the orbit ${\mathcal G}_1 x_0$, whence \eqref{e:x_st_-x_notin_G_2x} holds. Alternatively, when $\nu_1 = \nu_2=: \nu$ (and ${\mathcal G}_2 = {\mathcal D}_{\nu}$), fix the point
\begin{equation}\label{e:x0=e(i2pi/4nu2)}
x_0 \equiv e^{i \frac{2 \pi}{4 \nu} \frac{1}{2}} \in S^1.
\end{equation}
The 4 in the denominator stems from splitting each slice of angular size $\frac{2 \pi}{\nu}$ in half twice, the first time based on the reflection axes, the second time to split the reflection regions (for ease of visualization, in Figure \ref{f:asymm_sphere}, lower left panel, there are $4 \nu = 12$ slices of angular size $\frac{2 \pi}{12}$. The point $x_0$ splits in half the first slice, where we start counting in the counterclockwise sense at $(1,0)^* \equiv 1$; see also Example \ref{ex:D_3_-x_notin_Gx} below). Then,
\begin{equation}\label{e:-x0_notin_(mathcal G)1_x0}
-x_0 \notin {\mathcal G}x_0
\end{equation}
for ${\mathcal G} = {\mathcal G}_1$. In addition, the orbit ${\mathcal D}_{\nu}x_0$ consists of $\nu$ pairs of points around reflection axes, where the counterclockwise angular distance between two successive pairs of points corresponds to a rotation $O_{3(\frac{2 \pi}{4\nu})}$. Since ${\mathcal C}_{\nu}x_0$ consists of the $\nu$ points obtained by successive rotations $O_{\frac{2 \pi}{\nu}}$ starting at $x_0$, ${\mathcal G}_1 x_0 \subset {\mathcal G}_2 x_0$. This shows \eqref{e:x_st_-x_notin_G_2x} for the subcase $(ii.a)$.

Under \eqref{e:x_st_-x_notin_G_1x} and \eqref{e:-I_notin_G2}, for subcase $(ii.b)$ it suffices to consider
$$
{\mathcal D}_{\nu_1} \subset {\mathcal D}_{\nu_2}, \quad \textnormal{$\nu_1$ and $\nu_2$ are odd}.
$$
In particular, ${\mathcal D}_{\nu_2}$ contains all the rotations in ${\mathcal D}_{\nu_1}$. Therefore, $\nu_2$ must be a multiple of $\nu_1$. Since, in addition, $\nu_1$, $\nu_2$ are odd, then
\begin{equation}\label{e:nu2=znu1}
\nu_2 = z \nu_1, \quad z \in \bbN, \quad z \geq 3.
\end{equation}
Choose again the starting point $x_0$ as in \eqref{e:x0=e(i2pi/4nu2)} with $\nu = \nu_2$. Then, again \eqref{e:-x0_notin_(mathcal G)1_x0} holds with ${\mathcal G} = {\mathcal G}_2$ and $\textnormal{card}({\mathcal D}_{\nu_2}x_0) = 2\nu_2$. Moreover, by \eqref{e:nu2=znu1},
$$
\textnormal{card}({\mathcal D}_{\nu_1}x_0) \leq \textnormal{card}({\mathcal D}_{\nu_1}) = 2 \nu_1 < \nu_2 < \textnormal{card}({\mathcal D}_{\nu_2}x_0).
$$
Therefore, \eqref{e:x_st_-x_notin_G_2x} holds, which establishes $(ii)$. $\Box$\\
\end{proof}

\begin{example}\label{ex:D_3_-x_notin_Gx}
Consider the subgroup ${\mathcal G} = {\mathcal D}_3$. Then, there are only six points $x \in S^1$ for which $- x \in {\mathcal G} x$ (see Table \ref{table:antipode} and Figure \ref{f:asymm_sphere}).
\begin{table}[h]
\centering
\begin{tabular}{ccc}\hline
   reflection & reflection axis (angle) & $- x = F_{\bullet}\hspace{0.5mm} x  \Leftrightarrow x = \hdots$\\\hline
   $F_{\frac{2 \pi}{3}}$ & $\frac{\pi}{3}$  & $\{e^{i\frac{5 \pi}{6}}, e^{i \frac{11\pi}{6}}\}$ \\
   $F_{\frac{4 \pi}{3}}$ & $\frac{2\pi}{3}$  &  $\{e^{i\frac{\pi}{6}}, e^{i \frac{7\pi}{6}}\}$  \\
   $F_{2 \pi}$           & $\pi$          & $\{e^{i \frac{\pi}{2}}, e^{i \frac{3\pi}{2}}\}$\\ \hline
\end{tabular}\caption{dihedral group ${\mathcal D}_3$: points in $S^1$ mapped to their antipodes by a reflection.}\label{table:antipode}
\end{table}
\end{example}

\bibliography{ofbf}

\small

\bigskip

\noindent \begin{tabular}{lll}
Gustavo Didier & Mark M.\ Meerschaert & Vladas Pipiras \\
Mathematics Department &  Dept.\ of Statistics and Probability & Dept.\ of Statistics and Operations Research \\
Tulane University & Michigan State University & UNC at Chapel Hill \\
6823 St.\ Charles Avenue  & 619 Red Cedar Road & CB\#3260, Hanes Hall \\
New Orleans, LA 70118, USA & East Lansing, MI 48824, USA & Chapel Hill, NC 27599, USA \\
{\it gdidier@tulane.edu}& {\it mcubed@stt.msu.edu}   & {\it pipiras@email.unc.edu} \\
\end{tabular}\\

\smallskip

\end{document}